\theoremstyle{plain}
\newtheorem{thm}{Theorem}
\newtheorem{expl}{Example}
\begin{document}
\title{ Some boundedness results for systems of two rational difference equations}
\author[G. Lugo]{G. Lugo}
\author[F.J. Palladino]{F.J. Palladino$^{1}$}
\date{September 9, 2009}

\begin{abstract}
\noindent We study $k^{th}$ order systems of two rational
difference equations
$$x_n=\frac{\alpha+\sum^{k}_{i=1}\beta_{i}x_{n-i} + \sum^{k}_{i=1}\gamma_{i}y_{n-i}}{A+\sum^{k}_{j=1}B_{j}x_{n-j} + \sum^{k}_{j=1}C_{j}y_{n-j}},\quad n\in\mathbb{N},$$
$$y_n=\frac{p+\sum^{k}_{i=1}\delta_{i}x_{n-i} + \sum^{k}_{i=1}\epsilon_{i}y_{n-i}}{q+\sum^{k}_{j=1}D_{j}x_{n-j} + \sum^{k}_{j=1}E_{j}y_{n-j}},\quad n\in\mathbb{N}.$$
In particular we assume non-negative parameters and non-negative initial conditions. We develop several approaches which allow us to extend well known boundedness results on the $k^{th}$ order rational difference equation to the setting of systems in certain cases.
\end{abstract}
\maketitle
\par\vspace{0.01 cm}
\hspace{0.5 cm}
Department of Mathematics, University of Rhode Island,
\newline\indent
\hspace{0.6 cm} Kingston, RI 02881-0816, USA;
\newline\indent
\par\vspace{0.05 cm}
\hspace{0.6 cm} $^1$Corresponding author. email: frank@math.uri.edu
\par\vspace{0.05 cm}
\footnotetext
{\bf Keywords: difference equation, systems, boundedness character. \newline\indent \hspace{.1 cm} {\bf AMS Subject
Classification: 39A10,39A11}}

\section{Introduction}

There has been recent interest in the study of systems of rational difference equations. The purpose of this article is to provide several analogues for successful techniques which were responsible for some well known boundedness results on the $k^{th}$ order rational difference equation.
The well known results we refer to are those presented in \cite{c1}. Several years following the appearance of \cite{c1} in the literature, \cite{fpinv} appeared in the literature. \cite{fpinv} primarily served to generalize some of the results presented in \cite{c1}.
This was done by showing that some analogous results held for certain recursive inequalities. It turns out that this generalization is very useful when studying the boundedness character of systems of rational difference equations.
Often times a complicated system will have a simpler difference inequality associated. We then use Theorem 1 of \cite{fpinv} to show that one of the sequences $\{x_{n}\}^{\infty}_{n=1}$
or $\{y_{n}\}^{\infty}_{n=1}$ is bounded. In this way Theorem 1 of \cite{fpinv} will provide the primary mechanism used in the proof of many of the results presented below.

\section{Using a Comparison}

Since we are presenting many results on systems of two rational difference equations which are analogues to the results of \cite{c1} and since we make heavy use of \cite{fpinv}, it should come as no surprise that we make use of similar notation. So we let $I_{\beta}=\{i\in\{1,\dots,k\}| \beta_{i} > 0\}$, $I_{\gamma}=\{i\in\{1,\dots,k\}| \gamma_{i} > 0\}$, $I_{\delta}=\{i\in\{1,\dots,k\}| \delta_{i} > 0\}$, $I_{\epsilon}=\{i\in\{1,\dots,k\}| \epsilon_{i} > 0\}$, $I_{B}=\{j\in\{1,\dots,k\}| B_{j} > 0\}$, $I_{C}=\{j\in\{1,\dots,k\}| C_{j} > 0\}$, $I_{D}=\{j\in\{1,\dots,k\}| D_{j} > 0\}$, and $I_{E}=\{j\in\{1,\dots,k\}| E_{j} > 0\}$. We also define $\min_{+}(\cdot,\cdot)$ to be the minimum of non-zero elements. This notation will allow us to shorten some arguments which we present later. In order for $\min_{+}(\cdot,\cdot)$ to be well defined at least one of the elements must be non-zero.
For the results in this section, we assume that there exists constants $M_{1},M_{2} > 0$ so that $M_{1}y_{n}\leq x_{n} \leq M_{2}y_{n}$ for all $n\in\mathbb{N}$. We use this comparison and Theorem 1 of \cite{fpinv} to prove that every solution is bounded for certain systems of rational difference equations.\newline

We begin with two theorems which generalize the standard iteration result to systems of two rational difference equations where there exists constants $M_{1},M_{2} > 0$ so that $M_{1}y_{n}\leq x_{n} \leq M_{2}y_{n}$ for all $n\in\mathbb{N}$.
Eventually these two theorems are subsumed by several slightly more general theorems at the end of the article, particularly Theorem $20$ and Theorem $21$.
In the following theorem, we use the results from \cite{fpinv} to iterate with respect to $x_{n}$.
\begin{thm}
Suppose that we have a $k^{th}$ order system of two rational
difference equations
$$x_n=\frac{\alpha+\sum^{k}_{i=1}\beta_{i}x_{n-i} + \sum^{k}_{i=1}\gamma_{i}y_{n-i}}{A+\sum^{k}_{j=1}B_{j}x_{n-j} + \sum^{k}_{j=1}C_{j}y_{n-j}},\quad n\in\mathbb{N},$$
$$y_n=\frac{p+\sum^{k}_{i=1}\delta_{i}x_{n-i} + \sum^{k}_{i=1}\epsilon_{i}y_{n-i}}{q+\sum^{k}_{j=1}D_{j}x_{n-j} + \sum^{k}_{j=1}E_{j}y_{n-j}},\quad n\in\mathbb{N},$$
with non-negative parameters and non-negative initial conditions.
Further assume that there exists constants $M_{1},M_{2} > 0$ so that $M_{1}y_{n}\leq x_{n} \leq M_{2}y_{n}$ for all $n\in\mathbb{N}$. Also suppose that $A>0$ and there exists a positive integer $\eta$, such that for every sequence $\{c_{m}\} ^{\infty}_{m=1} $ with $c_m\in I_{\beta}\cup I_{\gamma}$ for $m=1,2, \dots $ there exists positive integers,  $
N_{1}
$,$
N_{2} \leq \eta
$ $ $, such that $ \sum^{N_{2}}_{m=N_{1}} c_{m} \in I_{B}\cup I_{C}$.\newline
Then there exists $M > 0$ and $N\in\mathbb{N}$ so that given any non-negative initial conditions, we have $x_{n},y_{n}\leq M$ for all $n>N$.
\end{thm}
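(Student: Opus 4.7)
The plan is to reduce the two-equation system to a single rational difference inequality in the sequence $\{x_n\}$ alone, using the standing hypothesis $M_1 y_n \leq x_n \leq M_2 y_n$, and then apply Theorem 1 of \cite{fpinv} to conclude that $\{x_n\}$ is eventually bounded. Boundedness of $\{y_n\}$ then falls out immediately from the left-hand side of the comparison.

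First I would substitute the bounds $y_{n-i} \leq \frac{1}{M_1} x_{n-i}$ into the numerator of the $x_n$-equation (to enlarge it) and $y_{n-j} \geq \frac{1}{M_2} x_{n-j}$ into the denominator (to shrink it). This yields
$$x_n \;\leq\; \frac{\alpha + \sum_{i=1}^{k} \bigl(\beta_i + \gamma_i/M_1\bigr)\, x_{n-i}}{A + \sum_{j=1}^{k} \bigl(B_j + C_j/M_2\bigr)\, x_{n-j}}, \qquad n\in\mathbb{N}.$$
Let $\widetilde{\beta}_i = \beta_i + \gamma_i/M_1$ and $\widetilde{B}_j = B_j + C_j/M_2$. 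Crucially, the set of indices $i$ for which $\widetilde{\beta}_i > 0$ is exactly $I_\beta \cup I_\gamma$, and the set of indices $j$ for which $\widetilde{B}_j > 0$ is exactly $I_B \cup I_C$, since all original parameters are non-negative.

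Next I would check the iteration hypothesis of Theorem 1 of \cite{fpinv} for this reduced inequality. That theorem requires the existence of an integer $\eta$ such that for every sequence $\{c_m\}$ with $c_m$ in the numerator index set, there exist $N_1, N_2 \leq \eta$ with $\sum_{m=N_1}^{N_2} c_m$ lying in the denominator index set. In our reduced inequality the numerator index set is $I_\beta \cup I_\gamma$ and the denominator index set is $I_B \cup I_C$, so this condition is precisely the hypothesis assumed in the statement. Together with $A>0$, this lets us invoke Theorem 1 of \cite{fpinv} to obtain a constant $M' > 0$ and an $N\in\mathbb{N}$ such that $x_n \leq M'$ for all $n > N$, independently of the non-negative initial conditions.

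Finally, for $n > N$ we get $y_n \leq x_n / M_1 \leq M'/M_1$ from the comparison assumption, and setting $M = \max(M', M'/M_1)$ yields $x_n, y_n \leq M$ for all $n > N$. The main technical obstacle is really only the first step: packaging the system-to-inequality reduction so that the resulting single-variable inequality has its positive-coefficient support sitting exactly in $I_\beta \cup I_\gamma$ (numerator) and $I_B \cup I_C$ (denominator), which is what lets the theorem's iteration condition plug directly into the hypothesis of Theorem 1 of \cite{fpinv} without any further combinatorial work.
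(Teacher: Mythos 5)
Your proposal is correct and follows essentially the same route as the paper: both use $y_{n-i}\leq x_{n-i}/M_{1}$ in the numerator and $y_{n-j}\geq x_{n-j}/M_{2}$ in the denominator to obtain the single difference inequality in $\{x_{n}\}$, then apply Theorem 1 of \cite{fpinv} and recover the bound on $\{y_{n}\}$ from the comparison. Your added remark that the supports of the combined coefficients are exactly $I_{\beta}\cup I_{\gamma}$ and $I_{B}\cup I_{C}$ makes explicit a step the paper leaves implicit, but it is the same argument.
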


\begin{proof}
Since $M_{1}y_{n}\leq x_{n} \leq M_{2}y_{n}$ we have
$$x_{n}=\frac{\alpha+\sum^{k}_{i=1}\beta_{i}x_{n-i} + \sum^{k}_{i=1}\gamma_{i}y_{n-i}}{A+\sum^{k}_{j=1}B_{j}x_{n-j} + \sum^{k}_{j=1}C_{j}y_{n-j}} \leq \frac{\alpha+\sum^{k}_{i=1}\beta_{i}x_{n-i} + (\frac{1}{M_{1}})\sum^{k}_{i=1}\gamma_{i}x_{n-i}}{A+\sum^{k}_{j=1}B_{j}x_{n-j} + (\frac{1}{M_{2}})\sum^{k}_{j=1}C_{j}x_{n-j}}.$$
Thus we see that the sequence $\{x_{n}\}^{\infty}_{n=1}$ satisfies the above difference inequality. Moreover using Theorem 1 in \cite{fpinv} we see that there exists $M > 0$ and $N\in\mathbb{N}$ so that given any non-negative initial conditions, we have $x_{n}\leq M$ for all $n>N$. The fact that $M_{1}y_{n}\leq x_{n} \leq M_{2}y_{n}$ immediately yields the full result.\newline
\end{proof}

Now, in the following theorem, we use the results from \cite{fpinv} to iterate with respect to $y_{n}$.
\begin{thm}
 Suppose that we have a $k^{th}$ order system of two rational
difference equations
$$x_n=\frac{\alpha+\sum^{k}_{i=1}\beta_{i}x_{n-i} + \sum^{k}_{i=1}\gamma_{i}y_{n-i}}{A+\sum^{k}_{j=1}B_{j}x_{n-j} + \sum^{k}_{j=1}C_{j}y_{n-j}},\quad n\in\mathbb{N},$$
$$y_n=\frac{p+\sum^{k}_{i=1}\delta_{i}x_{n-i} + \sum^{k}_{i=1}\epsilon_{i}y_{n-i}}{q+\sum^{k}_{j=1}D_{j}x_{n-j} + \sum^{k}_{j=1}E_{j}y_{n-j}},\quad n\in\mathbb{N},$$
with non-negative parameters and non-negative initial conditions.
Further assume that there exists constants $M_{1},M_{2} > 0$ so that $M_{1}y_{n}\leq x_{n} \leq M_{2}y_{n}$ for all $n\in\mathbb{N}$.
Also suppose that $q>0$ and there exists a positive integer $\eta$, such that for every sequence $\{c_{m}\} ^{\infty}_{m=1} $ with $c_m\in I_{\delta}\cup I_{\epsilon}$ for $m=1,2, \dots $ there exists positive integers,  $
N_{1}
$,$
N_{2} \leq \eta
$ $ $, such that $ \sum^{N_{2}}_{m=N_{1}} c_{m} \in I_{D}\cup I_{E}$.\newline
Then there exists $M > 0$ and $N\in\mathbb{N}$ so that given any non-negative initial conditions, we have $x_{n},y_{n}\leq M$ for all $n>N$.
\end{thm}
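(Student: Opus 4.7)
The proof should be the symmetric analogue of the one for Theorem 1, now iterating with respect to $\{y_n\}_{n=1}^\infty$ via the comparison hypothesis. My plan is as follows.

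First, I would use the two-sided comparison $M_1 y_n \le x_n \le M_2 y_n$ directly inside the defining recursion for $y_n$. Specifically, in the numerator I would replace each $x_{n-i}$ by the upper bound $M_2 y_{n-i}$ (making the numerator larger), and in the denominator I would replace each $x_{n-j}$ by the lower bound $M_1 y_{n-j}$ (making the denominator smaller). This produces the difference inequality
$$y_n \le \frac{p + \sum_{i=1}^{k}(M_2\delta_i + \epsilon_i)\, y_{n-i}}{q + \sum_{j=1}^{k}(M_1 D_j + E_j)\, y_{n-j}},\quad n\in\mathbb{N},$$
which is precisely of the form for which Theorem 1 of \cite{fpinv} is stated, with effective numerator coefficients $M_2\delta_i + \epsilon_i$ and denominator coefficients $M_1 D_j + E_j$.

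Next, I would check that the iteration hypothesis on the index sets is preserved. Since $M_1,M_2>0$, the set of indices $i$ where $M_2\delta_i + \epsilon_i > 0$ equals $I_\delta\cup I_\epsilon$, and similarly the set of indices $j$ where $M_1 D_j + E_j > 0$ equals $I_D\cup I_E$. Together with $q>0$ and the hypothesized $\eta$-property for sequences drawn from $I_\delta\cup I_\epsilon$ being absorbed by $I_D\cup I_E$, the full set of hypotheses of Theorem 1 of \cite{fpinv} is satisfied for the inequality above.

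Applying Theorem 1 of \cite{fpinv} to $\{y_n\}$ then yields constants $M'>0$ and $N\in\mathbb{N}$ such that $y_n\le M'$ for all $n>N$, uniformly in the non-negative initial conditions. The comparison $x_n\le M_2 y_n$ immediately gives $x_n\le M_2 M'$ for the same range of $n$, so taking $M=\max(M',M_2M')$ completes the argument. The only step requiring any care is verifying that the monotonicity used when substituting the comparison goes the correct way in the numerator and the denominator; the rest is a direct transcription of the proof of Theorem 1 with the roles of $x$ and $y$ interchanged.
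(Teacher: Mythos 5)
Your proposal is correct and follows exactly the paper's own argument: substitute $x_{n-i}\leq M_{2}y_{n-i}$ in the numerator and $x_{n-j}\geq M_{1}y_{n-j}$ in the denominator to obtain the difference inequality in $\{y_{n}\}$ alone, apply Theorem 1 of \cite{fpinv}, and then transfer the bound to $\{x_{n}\}$ via the comparison. Your additional check that the effective index sets coincide with $I_{\delta}\cup I_{\epsilon}$ and $I_{D}\cup I_{E}$ is a nice touch the paper leaves implicit.
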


\begin{proof}
Since $M_{1}y_{n}\leq x_{n} \leq M_{2}y_{n}$ we have
$$y_{n}=\frac{p+\sum^{k}_{i=1}\delta_{i}x_{n-i} + \sum^{k}_{i=1}\epsilon_{i}y_{n-i}}{q+\sum^{k}_{j=1}D_{j}x_{n-j} + \sum^{k}_{j=1}E_{j}y_{n-j}} \leq \frac{p+\sum^{k}_{i=1}\delta_{i}M_{2}y_{n-i} + \sum^{k}_{i=1}\epsilon_{i}y_{n-i}}{q+\sum^{k}_{j=1}D_{j}M_{1}y_{n-j} + \sum^{k}_{j=1}E_{j}y_{n-j}}.$$
Thus we see that the sequence $\{y_{n}\}^{\infty}_{n=1}$ satisfies the above difference inequality. Using Theorem 1 in \cite{fpinv} we see that there exists $M > 0$ and $N\in\mathbb{N}$ so that given any non-negative initial conditions, we have $y_{n}\leq M$ for all $n>N$. The fact that $M_{1}y_{n}\leq x_{n} \leq M_{2}y_{n}$ yields the full result.\newline
\end{proof}

\begin{thm}
 Suppose that we have a $k^{th}$ order system of two rational
difference equations
$$x_n=\frac{\alpha+\sum^{k}_{i=1}\beta_{i}x_{n-i} + \sum^{k}_{i=1}\gamma_{i}y_{n-i}}{A+\sum^{k}_{j=1}B_{j}x_{n-j} + \sum^{k}_{j=1}C_{j}y_{n-j}},\quad n\in\mathbb{N},$$
$$y_n=\frac{p+\sum^{k}_{i=1}\delta_{i}x_{n-i} + \sum^{k}_{i=1}\epsilon_{i}y_{n-i}}{q+\sum^{k}_{j=1}D_{j}x_{n-j} + \sum^{k}_{j=1}E_{j}y_{n-j}},\quad n\in\mathbb{N},$$
with non-negative parameters and non-negative initial conditions.
Further assume that there exists constants $M_{1},M_{2} > 0$ so that $M_{1}y_{n}\leq x_{n} \leq M_{2}y_{n}$ for all $n\in\mathbb{N}$.
Also suppose that $A=0$ and one of the following holds
\begin{enumerate}[(i)]
\item $I_{B}\cup I_{C}\subset I_{\beta}\cup I_{\gamma}$ and $I_{B}\neq\emptyset$
\item $q=0$, $I_{D}\cup I_{E}\subset I_{\delta}\cup I_{\epsilon}$, and $I_{C}\neq\emptyset$
\item $p,q>0$, $I_{D}\cup I_{E}\subset I_{\delta}\cup I_{\epsilon}$, and $I_{C}\neq\emptyset$
\end{enumerate}
and there exists a positive integer $\eta$, such that for every sequence $\{c_{m}\} ^{\infty}_{m=1} $ with $c_m\in I_{\beta}\cup I_{\gamma}$ for $m=1,2, \dots $ there exists positive integers,  $
N_{1}
$,$
N_{2} \leq \eta
$ $ $, such that $ \sum^{N_{2}}_{m=N_{1}} c_{m} \in I_{B}\cup I_{C}$. \newline
Then there exists $M > 0$ and $N\in\mathbb{N}$ so that given any non-negative initial conditions, we have $x_{n},y_{n}\leq M$ for all $n>N$.
\end{thm}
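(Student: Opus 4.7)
The plan is to follow the scheme of Theorem 1 of the current paper---collapse the $x_n$ equation into a single-variable rational difference inequality by applying the comparison $M_1 y_n \leq x_n \leq M_2 y_n$, yielding
$$x_n \leq \frac{\alpha + \sum_{i=1}^{k} (\beta_i + \gamma_i/M_1) x_{n-i}}{\sum_{j=1}^{k} (B_j + C_j/M_2) x_{n-j}},$$
and then appeal to Theorem 1 of \cite{fpinv}. The obstacle is that $A=0$, so the collapsed denominator carries no positive constant while Theorem 1 of \cite{fpinv} requires one. The work is therefore, in each of the three cases, to produce a uniform positive lower bound on either $\{x_n\}$ or $\{y_n\}$ from the corresponding subset hypothesis, and then to combine it with the non-emptiness of $I_B$ or $I_C$ to force the collapsed denominator uniformly bounded below by a positive constant.

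For case (i), the subset $I_B \cup I_C \subset I_\beta \cup I_\gamma$ together with the comparison yields, for each $j \in I_B \cup I_C$, a positive constant $K_j$ with $\beta_j x_{n-j} + \gamma_j y_{n-j} \geq K_j (B_j x_{n-j} + C_j y_{n-j})$ (the ratio $(\beta_j + \gamma_j r)/(B_j + C_j r)$ being positive and continuous on $r \in [1/M_2, 1/M_1]$, where $r = y_{n-j}/x_{n-j}$). Setting $K = \min_j K_j > 0$ and summing over $j \in I_B \cup I_C$ gives $\sum_i (\beta_i x_{n-i} + \gamma_i y_{n-i}) \geq K \sum_j (B_j x_{n-j} + C_j y_{n-j})$, so dividing by the denominator (which equals $\sum_j (B_j x_{n-j} + C_j y_{n-j})$ since $A=0$) yields $x_n \geq K$ for all $n$. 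Since $I_B \neq \emptyset$, choosing $j_0 \in I_B$ gives $B_{j_0} x_{n-j_0} \geq B_{j_0} K > 0$, uniformly bounding the denominator below.

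For cases (ii) and (iii), the symmetric argument applied to the $y_n$ equation using $I_D \cup I_E \subset I_\delta \cup I_\epsilon$ produces $K' > 0$ with $\sum_i (\delta_i x_{n-i} + \epsilon_i y_{n-i}) \geq K' \sum_j (D_j x_{n-j} + E_j y_{n-j})$. Writing $S = \sum_j (D_j x_{n-j} + E_j y_{n-j})$, this gives $y_n \geq (p + K'S)/(q+S)$. When $q=0$ (case (ii)) this is immediately $\geq K'$; when $p,q>0$ (case (iii)), a monotonicity check of $t \mapsto (p + K't)/(q+t)$ on $t \geq 0$ (its derivative has the sign of $K'q - p$) shows the right-hand side is bounded below by $\min(p/q, K') > 0$. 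In either case $y_n$ is uniformly positive, and since $I_C \neq \emptyset$, picking $j_0 \in I_C$ makes $C_{j_0} y_{n-j_0}$ a uniform positive lower bound for the denominator of the collapsed $x$-inequality.

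In all three cases, once the denominator of the collapsed $x$-inequality is uniformly bounded below by some $c > 0$, using $\sum_j (B_j + C_j/M_2) x_{n-j} \geq c/2 + (1/2)\sum_j (B_j + C_j/M_2) x_{n-j}$ recasts the inequality as
$$x_n \leq \frac{2\alpha + \sum_i 2(\beta_i + \gamma_i/M_1) x_{n-i}}{c + \sum_j (B_j + C_j/M_2) x_{n-j}},$$
which matches the hypothesis of Theorem 1 of \cite{fpinv} with positive constant $c$ in the denominator, and the hypothesized eta condition for $(I_\beta \cup I_\gamma, I_B \cup I_C)$ transfers verbatim. Theorem 1 of \cite{fpinv} then bounds $\{x_n\}$, and the comparison bounds $\{y_n\}$. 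The most delicate step is the lower-bound derivation for $y_n$ in case (iii), where the simultaneous positivity of $p$ and $q$ precludes the clean cancellation available in case (ii) and requires the monotonicity argument for $(p+K't)/(q+t)$.
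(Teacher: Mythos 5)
Your proposal is correct and follows essentially the same route as the paper's proof: in each case you first extract a uniform positive lower bound on $\{x_n\}$ or $\{y_n\}$ from the subset hypothesis, use $I_B\neq\emptyset$ or $I_C\neq\emptyset$ to bound the collapsed denominator below, split that denominator in half to manufacture the positive constant required by Theorem 1 of \cite{fpinv}, and finish with the comparison. The only differences are cosmetic — you obtain the lower bounds via a compactness argument on the ratio $(\beta_j+\gamma_j r)/(B_j+C_j r)$ and a monotonicity check of $(p+K't)/(q+t)$, where the paper substitutes $M_1y\leq x\leq M_2y$ directly and applies a mediant-type inequality to reach the same constants.
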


\begin{proof}
Suppose that $I_{B}\cup I_{C}\subset I_{\beta}\cup I_{\gamma}$ and $I_{B}\neq\emptyset$ then we have
$$x_{n}=\frac{\alpha+\sum^{k}_{i=1}\beta_{i}x_{n-i} + \sum^{k}_{i=1}\gamma_{i}y_{n-i}}{\sum^{k}_{j=1}B_{j}x_{n-j} + \sum^{k}_{j=1}C_{j}y_{n-j}}\geq \frac{\sum_{i\in I_{\beta}}\beta_{i}M_{1}y_{n-i} + \sum_{i\in I_{\gamma}}\gamma_{i}y_{n-i}}{\sum_{j\in I_{\beta}\cup I_{\gamma}}B_{j}M_{2}y_{n-j}+C_{j}y_{n-j}}$$
$$\geq \frac{\min(\min_{i\in I_{\beta}}(\beta_{i}M_{1}),\min_{i\in I_{\gamma}}(\gamma_{i}))}{\max_{j\in I_{\beta}\cup I_{\gamma}}(B_{j}M_{2}+C_{j})}.$$
So in this case $\{x_{n}\}$ is bounded below by a constant.
Now suppose that $q=0$, $I_{D}\cup I_{E}\subset I_{\delta}\cup I_{\epsilon}$, and $I_{C}\neq\emptyset$ then we have
$$y_{n}=\frac{p+\sum^{k}_{i=1}\delta_{i}x_{n-i} + \sum^{k}_{i=1}\epsilon_{i}y_{n-i}}{\sum^{k}_{j=1}D_{j}x_{n-j} + \sum^{k}_{j=1}E_{j}y_{n-j}}\geq \frac{\sum_{i\in I_{\delta}}\delta_{i}M_{1}y_{n-i} + \sum_{i\in I_{\epsilon}}\epsilon_{i}y_{n-i}}{\sum_{j\in I_{\delta}\cup I_{\epsilon}}D_{j}M_{2}y_{n-j}+ E_{j}y_{n-j}}$$
$$\geq \frac{\min(\min_{i\in I_{\delta}}(\delta_{i}M_{1}),\min_{i\in I_{\epsilon}}(\epsilon_{i}))}{\max_{j\in I_{\delta}\cup I_{\epsilon}}(D_{j}M_{2}+E_{j})}.$$
So in this case $\{y_{n}\}$ is bounded below by a constant.
Now suppose that $p, q>0$, $I_{D}\cup I_{E}\subset I_{\delta}\cup I_{\epsilon}$, and $I_{C}\neq\emptyset$ then we have
$$y_{n}=\frac{p+\sum^{k}_{i=1}\delta_{i}x_{n-i} + \sum^{k}_{i=1}\epsilon_{i}y_{n-i}}{q + \sum^{k}_{j=1}D_{j}x_{n-j} + \sum^{k}_{j=1}E_{j}y_{n-j}}\geq \frac{p + \sum_{i\in I_{\delta}}\delta_{i}M_{1}y_{n-i} + \sum_{i\in I_{\epsilon}}\epsilon_{i}y_{n-i}}{q + \sum_{j\in I_{\delta}\cup I_{\epsilon}}D_{j}M_{2}y_{n-j}+ E_{j}y_{n-j}}$$
$$\geq \frac{\min(p,\min_{i\in I_{\delta}}(\delta_{i}M_{1}),\min_{i\in I_{\epsilon}}(\epsilon_{i}))}{\max_{j\in I_{\delta}\cup I_{\epsilon}}(q+D_{j}M_{2}+E_{j})}.$$
So in this case $\{y_{n}\}$ is bounded below by a constant.
Now we have shown that there exists $A_{2}>0$, so that $\{x_{n}\}$ is bounded below by $A_{2}$ and $I_{B}\neq\emptyset$ or $\{y_{n}\}$ is bounded below by $A_{2}$ and $I_{C}\neq\emptyset$.
We use this fact to show the following.
$$x_{n}=\frac{\alpha+\sum^{k}_{i=1}\beta_{i}x_{n-i} + \sum^{k}_{i=1}\gamma_{i}y_{n-i}}{\sum^{k}_{j=1}B_{j}x_{n-j} + \sum^{k}_{j=1}C_{j}y_{n-j}}$$
$$\leq \frac{\alpha+\sum^{k}_{i=1}\beta_{i}x_{n-i} + (\frac{1}{M_{1}})\sum^{k}_{i=1}\gamma_{i}x_{n-i}}{\min_+(\sum^{k}_{j=1}B_{j},\sum^{k}_{j=1}C_{j})\frac{A_{2}}{2}+\frac{1}{2}\sum^{k}_{j=1}B_{j}x_{n-j} + \frac{1}{2}(\frac{1}{M_{2}})\sum^{k}_{j=1}C_{j}x_{n-j}}.$$
Thus we see that the sequence $\{x_{n}\}^{\infty}_{n=1}$ satisfies the above difference inequality. Moreover using Theorem 1 in \cite{fpinv} we see that there exists $M > 0$ and $N\in\mathbb{N}$ so that given any non-negative initial conditions, we have $x_{n}\leq M$ for all $n>N$. The fact that $M_{1}y_{n}\leq x_{n} \leq M_{2}y_{n}$ immediately yields the full result.\newline

\end{proof}
Now, in the following theorem, we use the results from \cite{fpinv} to iterate with respect to $y_{n}$.
\begin{thm}
 Suppose that we have a $k^{th}$ order system of two rational
difference equations
$$x_n=\frac{\alpha+\sum^{k}_{i=1}\beta_{i}x_{n-i} + \sum^{k}_{i=1}\gamma_{i}y_{n-i}}{A+\sum^{k}_{j=1}B_{j}x_{n-j} + \sum^{k}_{j=1}C_{j}y_{n-j}},\quad n\in\mathbb{N},$$
$$y_n=\frac{p+\sum^{k}_{i=1}\delta_{i}x_{n-i} + \sum^{k}_{i=1}\epsilon_{i}y_{n-i}}{q+\sum^{k}_{j=1}D_{j}x_{n-j} + \sum^{k}_{j=1}E_{j}y_{n-j}},\quad n\in\mathbb{N},$$
with non-negative parameters and non-negative initial conditions.
Further assume that there exists constants $M_{1},M_{2} > 0$ so that $M_{1}y_{n}\leq x_{n} \leq M_{2}y_{n}$ for all $n\in\mathbb{N}$.
Also suppose that $q=0$ and one of the following holds
\begin{enumerate}[(i)]
\item $A=0$, $I_{B}\cup I_{C}\subset I_{\beta}\cup I_{\gamma}$, and $I_{D}\neq\emptyset$
\item $\alpha, A>0$, $I_{B}\cup I_{C}\subset I_{\beta}\cup I_{\gamma}$, and $I_{D}\neq\emptyset$
\item $I_{D}\cup I_{E}\subset I_{\delta}\cup I_{\epsilon}$ and $I_{E}\neq\emptyset$
\end{enumerate}
and there exists a positive integer $\eta$, such that for every sequence $\{c_{m}\} ^{\infty}_{m=1} $ with $c_m\in I_{\delta}\cup I_{\epsilon}$ for $m=1,2, \dots $ there exists positive integers,  $
N_{1}
$,$
N_{2} \leq \eta
$ $ $, such that $ \sum^{N_{2}}_{m=N_{1}} c_{m} \in I_{D}\cup I_{E}$.\newline
Then there exists $M > 0$ and $N\in\mathbb{N}$ so that given any non-negative initial conditions, we have $x_{n},y_{n}\leq M$ for all $n>N$.
\end{thm}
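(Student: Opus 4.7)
The plan is to follow the pattern of the previous theorem but iterate with respect to $y_{n}$. Under each of the three hypotheses (i)--(iii) I would first establish a positive lower bound $A_{2}$ either for $\{x_{n}\}$ (in cases (i) and (ii)) or for $\{y_{n}\}$ (in case (iii)). Combined with $I_{D}\neq\emptyset$ in cases (i)--(ii), or $I_{E}\neq\emptyset$ in case (iii), this forces the denominator of the $y_{n}$ recursion to be bounded below by a positive constant, so I can produce a genuine difference inequality in $y_{n}$ alone and apply Theorem 1 of \cite{fpinv}. The comparison $M_{1}y_{n}\leq x_{n}\leq M_{2}y_{n}$ then upgrades an upper bound on $y$ to an upper bound on $x$.

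For the lower bound, in case (i) I would use $A=0$ and $I_{B}\cup I_{C}\subset I_{\beta}\cup I_{\gamma}$ to write
$$x_{n}\geq\frac{\sum_{i\in I_{\beta}}\beta_{i}M_{1}y_{n-i}+\sum_{i\in I_{\gamma}}\gamma_{i}y_{n-i}}{\sum_{j\in I_{\beta}\cup I_{\gamma}}(B_{j}M_{2}+C_{j})y_{n-j}},$$
which is bounded below by a positive constant by exactly the min/max estimate used in the proof of Theorem 3. Case (ii) differs only in that $\alpha$ and $A$ both appear in the expression; the elementary linear-fractional bound $\frac{\alpha+\sum a_{i}z_{i}}{A+\sum b_{i}z_{i}}\geq \min(\alpha/A,\min_{i}a_{i}/b_{i})$, valid under the same support inclusion, still yields a strictly positive lower bound for $\{x_{n}\}$. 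Case (iii) is the mirror estimate applied directly to $y_{n}$, using $q=0$ and $I_{D}\cup I_{E}\subset I_{\delta}\cup I_{\epsilon}$ to give $y_{n}\geq A_{2}>0$.

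With $A_{2}$ in hand, splitting the relevant term of the $y_{n}$-denominator in half (mirroring the display at the end of the proof of Theorem 3) gives the difference inequality
$$y_{n}\leq \frac{p+M_{2}\sum_{i=1}^{k}\delta_{i}y_{n-i}+\sum_{i=1}^{k}\epsilon_{i}y_{n-i}}{\min_{+}(\sum_{j=1}^{k}D_{j},\sum_{j=1}^{k}E_{j})\tfrac{A_{2}}{2}+\tfrac{M_{1}}{2}\sum_{j=1}^{k}D_{j}y_{n-j}+\tfrac{1}{2}\sum_{j=1}^{k}E_{j}y_{n-j}}.$$
The hypothesis relating $I_{\delta}\cup I_{\epsilon}$ to $I_{D}\cup I_{E}$ is precisely the condition required by Theorem 1 of \cite{fpinv}, so we conclude that $y_{n}\leq M$ for all $n$ past some index. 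The main obstacle is the bookkeeping across the three sub-cases: verifying in each that the constant $\min_{+}(\sum D_{j},\sum E_{j})\tfrac{A_{2}}{2}$ really is strictly positive, which is exactly where the hypotheses $I_{D}\neq\emptyset$ in (i)--(ii) and $I_{E}\neq\emptyset$ in (iii) are used to keep Theorem 1 of \cite{fpinv} from degenerating.
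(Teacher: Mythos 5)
Your proposal matches the paper's proof essentially step for step: the same case-by-case lower bounds (converting $x$'s to $y$'s via the comparison in cases (i)--(ii), working directly with $y_n$ in case (iii)), the same splitting of the denominator using $\min_{+}\bigl(\sum D_{j},\sum E_{j}\bigr)\tfrac{A_{2}}{2}$, the same application of Theorem 1 of the cited inequality paper to the resulting difference inequality in $y_{n}$, and the same final upgrade to $x_{n}$ via the two-sided comparison. The only cosmetic difference is your use of the mediant-type bound in case (ii) where the paper uses the cruder $\min/\max$ quotient; both yield the needed positive constant.
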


\begin{proof}
Suppose that $A=0$, $I_{B}\cup I_{C}\subset I_{\beta}\cup I_{\gamma}$, and $I_{D}\neq\emptyset$ then we have
$$x_{n}=\frac{\alpha+\sum^{k}_{i=1}\beta_{i}x_{n-i} + \sum^{k}_{i=1}\gamma_{i}y_{n-i}}{\sum^{k}_{j=1}B_{j}x_{n-j} + \sum^{k}_{j=1}C_{j}y_{n-j}}\geq \frac{\sum_{i\in I_{\beta}}\beta_{i}M_{1}y_{n-i} + \sum_{i\in I_{\gamma}}\gamma_{i}y_{n-i}}{\sum_{j\in I_{\beta}\cup I_{\gamma}}B_{j}M_{2}y_{n-j}+C_{j}y_{n-j}}$$
$$\geq \frac{\min(\min_{i\in I_{\beta}}(\beta_{i}M_{1}),\min_{i\in I_{\gamma}}(\gamma_{i}))}{\max_{j\in I_{\beta}\cup I_{\gamma}}(B_{j}M_{2}+C_{j})}.$$
So in this case $\{x_{n}\}$ is bounded below by a constant.
 Now suppose that $A,\alpha >0$, $I_{B}\cup I_{C}\subset I_{\beta}\cup I_{\gamma}$, and $I_{D}\neq\emptyset$ then we have
$$x_{n}=\frac{\alpha+\sum^{k}_{i=1}\beta_{i}x_{n-i} + \sum^{k}_{i=1}\gamma_{i}y_{n-i}}{ A+ \sum^{k}_{j=1}B_{j}x_{n-j} + \sum^{k}_{j=1}C_{j}y_{n-j}}\geq \frac{\alpha+\sum_{i\in I_{\beta}}\beta_{i}M_{1}y_{n-i} + \sum_{i\in I_{\gamma}}\gamma_{i}y_{n-i}}{A+\sum_{j\in I_{\beta}\cup I_{\gamma}}B_{j}M_{2}y_{n-j}+C_{j}y_{n-j}}$$
$$\geq \frac{\min(\alpha , \min_{i\in I_{\beta}}(\beta_{i}M_{1}),\min_{i\in I_{\gamma}}(\gamma_{i}))}{\max_{j\in I_{\beta}\cup I_{\gamma}}(A+B_{j}M_{2}+C_{j})}.$$
So in this case $\{x_{n}\}$ is bounded below by a constant.
Now suppose that $I_{D}\cup I_{E}\subset I_{\delta}\cup I_{\epsilon}$, and $I_{E}\neq\emptyset$ then we have
$$y_{n}=\frac{p+\sum^{k}_{i=1}\delta_{i}x_{n-i} + \sum^{k}_{i=1}\epsilon_{i}y_{n-i}}{\sum^{k}_{j=1}D_{j}x_{n-j} + \sum^{k}_{j=1}E_{j}y_{n-j}}\geq \frac{\sum_{i\in I_{\delta}}\delta_{i}M_{1}y_{n-i} + \sum_{i\in I_{\epsilon}}\epsilon_{i}y_{n-i}}{\sum_{j\in I_{\delta}\cup I_{\epsilon}}D_{j}M_{2}y_{n-j}+ E_{j}y_{n-j}}$$
$$\geq \frac{\min(\min_{i\in I_{\delta}}(\delta_{i}M_{1}),\min_{i\in I_{\epsilon}}(\epsilon_{i}))}{\max_{j\in I_{\delta}\cup I_{\epsilon}}(D_{j}M_{2}+E_{j})}.$$
So in this case $\{y_{n}\}$ is bounded below by a constant.
Now we have shown that there exists $q_{2}>0$, so that $\{x_{n}\}$ is bounded below by $q_{2}$ and $I_{D}\neq\emptyset$ or $\{y_{n}\}$ is bounded below by $q_{2}$ and $I_{E}\neq\emptyset$.
We use this fact to show the following.
$$y_{n}=\frac{p+\sum^{k}_{i=1}\delta_{i}x_{n-i} + \sum^{k}_{i=1}\epsilon_{i}y_{n-i}}{\sum^{k}_{j=1}D_{j}x_{n-j} + \sum^{k}_{j=1}E_{j}y_{n-j}}$$
$$\leq \frac{p+\sum^{k}_{i=1}\delta_{i}M_{2}y_{n-i} + \sum^{k}_{i=1}\epsilon_{i}y_{n-i}}{\min_+(\sum^{k}_{j=1}D_{j},\sum^{k}_{j=1}E_{j})\frac{q_{2}}{2}+\frac{1}{2}\sum^{k}_{j=1}D_{j}M_{1}y_{n-j} + \frac{1}{2}\sum^{k}_{j=1}E_{j}y_{n-j}}.$$
Thus we see that the sequence $\{y_{n}\}^{\infty}_{n=1}$ satisfies the above difference inequality. Again using Theorem 1 in \cite{fpinv} we see that there exists $M > 0$ and $N\in\mathbb{N}$ so that given any non-negative initial conditions, we have $y_{n}\leq M$ for all $n>N$. The fact that $M_{1}y_{n}\leq x_{n} \leq M_{2}y_{n}$ yields the full result.\newline
\end{proof}

For the following two theorems, no iteration is necessary. We only need to use some algebraic techniques similar to those used in \cite{c1}, coupled with the condition that there exists constants $M_{1},M_{2} > 0$ so that $M_{1}y_{n}\leq x_{n} \leq M_{2}y_{n}$ for all $n\in\mathbb{N}$.

\begin{thm}
 Suppose that we have a $k^{th}$ order system of two rational
difference equations
$$x_n=\frac{\alpha+\sum^{k}_{i=1}\beta_{i}x_{n-i} + \sum^{k}_{i=1}\gamma_{i}y_{n-i}}{A+\sum^{k}_{j=1}B_{j}x_{n-j} + \sum^{k}_{j=1}C_{j}y_{n-j}},\quad n\in\mathbb{N},$$
$$y_n=\frac{p+\sum^{k}_{i=1}\delta_{i}x_{n-i} + \sum^{k}_{i=1}\epsilon_{i}y_{n-i}}{q+\sum^{k}_{j=1}D_{j}x_{n-j} + \sum^{k}_{j=1}E_{j}y_{n-j}},\quad n\in\mathbb{N},$$
with non-negative parameters and non-negative initial conditions.
Further assume that there exists constants $M_{1},M_{2} > 0$ so that $M_{1}y_{n}\leq x_{n} \leq M_{2}y_{n}$ for all $n\in\mathbb{N}$.
Also suppose that $q=0$, $p =0$, and  $I_{\delta}\cup I_{\epsilon} \subset I_{D}\cup I_{E}$.\newline
Then there exists $M > 0$ and $N\in\mathbb{N}$ so that given any non-negative initial conditions, we have $x_{n},y_{n}\leq M$ for all $n>N$.
\end{thm}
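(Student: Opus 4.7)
The plan is to bypass iteration entirely and bound $y_n$ directly by a constant via a purely algebraic comparison, then use $x_n\leq M_2 y_n$ to bound $x_n$ as well. This is the kind of ``algebraic'' argument advertised in the paragraph preceding the theorem.

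First I would use the two-sided comparison $M_1 y_n\leq x_n\leq M_2 y_n$ to rewrite the formula for $y_n$. Applying the upper bound in the numerator and the lower bound in the denominator (both valid because $p=q=0$ eliminates any asymmetric constant terms), I obtain
\[
y_n \;\leq\; \frac{\sum_{i=1}^{k}(\delta_i M_2+\epsilon_i)\, y_{n-i}}{\sum_{j=1}^{k}(D_j M_1+E_j)\, y_{n-j}}.
\]
For each index $\ell$, let $a_\ell=\delta_\ell M_2+\epsilon_\ell$ and $b_\ell=D_\ell M_1+E_\ell$; then $a_\ell>0$ exactly when $\ell\in I_\delta\cup I_\epsilon$ and $b_\ell>0$ exactly when $\ell\in I_D\cup I_E$.

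Next, the hypothesis $I_\delta\cup I_\epsilon\subset I_D\cup I_E$ is exactly the statement that the support of $(a_\ell)$ is contained in the support of $(b_\ell)$, so every index with a nonzero numerator coefficient has a positive denominator coefficient. Dropping the (nonnegative) denominator terms with indices in $(I_D\cup I_E)\setminus(I_\delta\cup I_\epsilon)$ only decreases the denominator, so
\[
y_n \;\leq\; \frac{\sum_{\ell\in I_\delta\cup I_\epsilon} a_\ell\, y_{n-\ell}}{\sum_{\ell\in I_\delta\cup I_\epsilon} b_\ell\, y_{n-\ell}}.
\]
Now I invoke the elementary fact that, for nonnegative weights $w_\ell\geq 0$ (not all zero) and strictly positive $b_\ell$, one has $\frac{\sum a_\ell w_\ell}{\sum b_\ell w_\ell}\leq \max_\ell \frac{a_\ell}{b_\ell}$, applied with $w_\ell=y_{n-\ell}$. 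This yields the constant bound
\[
y_n \;\leq\; M_y \;:=\; \max_{\ell\in I_\delta\cup I_\epsilon}\frac{\delta_\ell M_2+\epsilon_\ell}{D_\ell M_1+E_\ell}.
\]
Finally, $x_n\leq M_2 y_n\leq M_2 M_y$, and taking $M=\max(M_y,M_2 M_y)$ with $N=0$ finishes the proof.

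The only subtle point is the case where the weights $y_{n-\ell}$ for $\ell\in I_\delta\cup I_\epsilon$ all vanish at some $n$; but then the numerator of the original expression for $y_n$ is also zero (because its indices lie in $I_\delta\cup I_\epsilon$ and $x_{n-i}\leq M_2 y_{n-i}=0$ for $i\in I_\delta$), so $y_n=0$ trivially satisfies the bound whenever it is defined. Hence no iterative use of Theorem~1 of \cite{fpinv} is needed here; the inclusion $I_\delta\cup I_\epsilon\subset I_D\cup I_E$ together with $p=q=0$ makes the denominator dominate the numerator pointwise. I do not expect a real obstacle beyond this degenerate-case bookkeeping.
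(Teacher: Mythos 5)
Your proposal is correct and is essentially the paper's own proof: both apply $x_{n-i}\leq M_{2}y_{n-i}$ in the numerator and $x_{n-j}\geq M_{1}y_{n-j}$ in the denominator, use $I_{\delta}\cup I_{\epsilon}\subset I_{D}\cup I_{E}$ to drop denominator terms outside the common support, and then bound the resulting ratio of weighted sums by a constant (the paper uses the slightly cruder bound $\max_{i}(\delta_{i}M_{2}+\epsilon_{i})/\min_{j}(D_{j}M_{1}+E_{j})$ where you use $\max_{\ell}(\delta_{\ell}M_{2}+\epsilon_{\ell})/(D_{\ell}M_{1}+E_{\ell})$, an immaterial difference). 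Your explicit handling of the degenerate case where all relevant $y_{n-\ell}$ vanish is a small bookkeeping point the paper leaves implicit.
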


\begin{proof}
We use the fact that $M_{1}y_{n}\leq x_{n} \leq M_{2}y_{n}$ to show
$$y_{n}=\frac{\sum^{k}_{i=1}\delta_{i}x_{n-i} + \sum^{k}_{i=1}\epsilon_{i}y_{n-i}}{\sum^{k}_{j=1}D_{j}x_{n-j} + \sum^{k}_{j=1}E_{j}y_{n-j}} \leq \frac{\sum^{k}_{i=1}\delta_{i}M_{2}y_{n-i} + \sum^{k}_{i=1}\epsilon_{i}y_{n-i}}{\sum^{k}_{j=1}D_{j}M_{1}y_{n-j} + \sum^{k}_{j=1}E_{j}y_{n-j}}\leq $$
$$\frac{\sum_{i\in I_{\delta}}\delta_{i}M_{2}y_{n-i} + \sum_{i\in I_{\epsilon}}\epsilon_{i}y_{n-i}}{\sum_{j\in I_{\delta}\cup I_{\epsilon}}D_{j}M_{1}y_{n-j} + \sum_{j\in I_{\delta}\cup I_{\epsilon}}E_{j}y_{n-j}}= \frac{\sum_{i\in I_{\delta}\cup I_{\epsilon}}(\delta_{i}M_{2}+\epsilon_{i})y_{n-i}}{\sum_{j\in I_{\delta}\cup I_{\epsilon}}(D_{j}M_{1}+E_{j})y_{n-j}}\leq$$
$$\left(\frac{\max_{i\in I_{\delta}\cup I_{\epsilon}}(\delta_{i}M_{2}+\epsilon_{i})}{\min_{j\in I_{\delta}\cup I_{\epsilon}}(D_{j}M_{1}+E_{j})}\right)\frac{\sum_{i\in I_{\delta}\cup I_{\epsilon}}y_{n-i}}{\sum_{j\in I_{\delta}\cup I_{\epsilon}}y_{n-j}}= \frac{\max_{i\in I_{\delta}\cup I_{\epsilon}}(\delta_{i}M_{2}+\epsilon_{i})}{\min_{j\in I_{\delta}\cup I_{\epsilon}}(D_{j}M_{1}+E_{j})}.$$
\end{proof}

\begin{thm}
 Suppose that we have a $k^{th}$ order system of two rational
difference equations
$$x_n=\frac{\alpha+\sum^{k}_{i=1}\beta_{i}x_{n-i} + \sum^{k}_{i=1}\gamma_{i}y_{n-i}}{A+\sum^{k}_{j=1}B_{j}x_{n-j} + \sum^{k}_{j=1}C_{j}y_{n-j}},\quad n\in\mathbb{N},$$
$$y_n=\frac{p+\sum^{k}_{i=1}\delta_{i}x_{n-i} + \sum^{k}_{i=1}\epsilon_{i}y_{n-i}}{q+\sum^{k}_{j=1}D_{j}x_{n-j} + \sum^{k}_{j=1}E_{j}y_{n-j}},\quad n\in\mathbb{N},$$
with non-negative parameters and non-negative initial conditions.
Further assume that there exists constants $M_{1},M_{2} > 0$ so that $M_{1}y_{n}\leq x_{n} \leq M_{2}y_{n}$ for all $n\in\mathbb{N}$.
Also suppose that $A=0$, $\alpha =0$, and $I_{\beta}\cup I_{\gamma} \subset I_{B}\cup I_{C}$.\newline
Then there exists $M > 0$ and $N\in\mathbb{N}$ so that given any non-negative initial conditions, we have $x_{n},y_{n}\leq M$ for all $n>N$.
\end{thm}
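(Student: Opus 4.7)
The plan is to mirror the proof of the preceding theorem, but with the roles of $x_n$ and $y_n$ interchanged: here $\alpha = A = 0$ and the inclusion goes $I_\beta\cup I_\gamma \subset I_B\cup I_C$, so it is natural to work directly with the formula for $x_n$ and bound it algebraically, without any invocation of Theorem~1 of \cite{fpinv}.

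First I would use the comparison $M_1 y_n \leq x_n \leq M_2 y_n$ to rewrite both numerator and denominator of $x_n$ in terms of the sequence $\{y_n\}$ alone. Specifically, in the numerator I would replace $x_{n-i}$ by the upper bound $M_2 y_{n-i}$, and in the denominator I would replace $x_{n-j}$ by the lower bound $M_1 y_{n-j}$, obtaining
\[
x_n \;\leq\; \frac{\sum_{i\in I_\beta\cup I_\gamma}(\beta_i M_2 + \gamma_i)\,y_{n-i}}{\sum_{j\in I_B\cup I_C}(B_j M_1 + C_j)\,y_{n-j}}.
\]
Next, using the hypothesis $I_\beta\cup I_\gamma \subset I_B\cup I_C$, I would shrink the index set of the denominator down to $I_\beta\cup I_\gamma$, which only decreases the denominator and hence increases the ratio. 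At this point both numerator and denominator are sums indexed over the same finite set $I_\beta\cup I_\gamma$, with the same terms $y_{n-i}$ appearing, weighted by the constants $\beta_i M_2 + \gamma_i$ above and $B_i M_1 + C_i$ below (the latter being strictly positive for $i\in I_B\cup I_C$).

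The decisive step is then the same pointwise estimate used in the previous theorem: replace each weight in the numerator by the maximum $\max_{i\in I_\beta\cup I_\gamma}(\beta_i M_2 + \gamma_i)$ and each weight in the denominator by the minimum $\min_{j\in I_\beta\cup I_\gamma}(B_j M_1 + C_j)$, after which the sums $\sum y_{n-i}$ in numerator and denominator cancel identically. This yields
\[
x_n \;\leq\; \frac{\max_{i\in I_\beta\cup I_\gamma}(\beta_i M_2 + \gamma_i)}{\min_{j\in I_\beta\cup I_\gamma}(B_j M_1 + C_j)},
\]
a constant uniform in $n$. Finally, the comparison $y_n \leq x_n/M_1$ transfers this bound to $\{y_n\}$, completing the argument.

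I do not expect any serious obstacle. The one place to be careful is verifying that the denominator after restriction is not vacuously zero: this is exactly why the hypothesis $I_\beta\cup I_\gamma \subset I_B\cup I_C$ is used, together with the fact that if the numerator indexing set $I_\beta\cup I_\gamma$ were empty then $x_n \equiv 0$ and the result is trivial, so we may assume $I_\beta\cup I_\gamma \neq \emptyset$ and then the minimum in the denominator is taken over a nonempty set of strictly positive weights.
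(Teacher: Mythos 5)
Your proposal is correct and follows essentially the same argument as the paper: bound the numerator and denominator termwise via the two-sided comparison, restrict the denominator's index set to $I_{\beta}\cup I_{\gamma}$, and then apply the max/min weight estimate to cancel the common sums. The only (cosmetic) difference is that you convert everything into the $y_{n-i}$'s whereas the paper converts everything into the $x_{n-i}$'s; both yield a uniform constant bound with no need for the iteration theorem.
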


\begin{proof}
Since $M_{1}y_{n}\leq x_{n} \leq M_{2}y_{n}$ we have
$$x_{n}=\frac{\sum^{k}_{i=1}\beta_{i}x_{n-i} + \sum^{k}_{i=1}\gamma_{i}y_{n-i}}{\sum^{k}_{j=1}B_{j}x_{n-j} + \sum^{k}_{j=1}C_{j}y_{n-j}} \leq \frac{\sum^{k}_{i=1}\beta_{i}x_{n-i} + (\frac{1}{M_{1}})\sum^{k}_{i=1}\gamma_{i}x_{n-i}}{\sum^{k}_{j=1}B_{j}x_{n-j} + (\frac{1}{M_{2}})\sum^{k}_{j=1}C_{j}x_{n-j}}\leq $$
$$\frac{\sum_{i\in I_{\beta}}\beta_{i}x_{n-i} + (\frac{1}{M_{1}})\sum_{i\in I_{\gamma}}\gamma_{i}x_{n-i}}{\sum_{j\in I_{\beta}\cup I_{\gamma}}B_{j}x_{n-j} + (\frac{1}{M_{2}})\sum_{j\in I_{\beta}\cup I_{\gamma}}C_{j}x_{n-j}}= \frac{\sum_{i\in I_{\beta}\cup I_{\gamma}}(\beta_{i}+\frac{\gamma_{i}}{M_{1}})x_{n-i}}{\sum_{j\in I_{\beta}\cup I_{\gamma}}(B_{j}+\frac{C_{j}}{M_{2}})x_{n-j}}\leq$$
$$\left(\frac{\max_{i\in I_{\beta}\cup I_{\gamma}}(\beta_{i}+\frac{\gamma_{i}}{M_{1}})}{\min_{j\in I_{\beta}\cup I_{\gamma}}(B_{j}+\frac{C_{j}}{M_{2}})}\right)\frac{\sum_{i\in I_{\beta}\cup I_{\gamma}}x_{n-i}}{\sum_{j\in I_{\beta}\cup I_{\gamma}}x_{n-j}}= \frac{\max_{i\in I_{\beta}\cup I_{\gamma}}(\beta_{i}+\frac{\gamma_{i}}{M_{1}})}{\min_{j\in I_{\beta}\cup I_{\gamma}}(B_{j}+\frac{C_{j}}{M_{2}})}.$$
\end{proof}

\section{Some boundedness Results Without Comparison}

In this section we use some algebraic techniques similar to those applied in \cite{c1}. Once we have obtained a difference inequality we apply Theorem 1 of \cite{fpinv}.

\begin{thm}
Suppose that we have a $k^{th}$ order system of two rational
difference equations
$$x_n=\frac{\alpha+\sum^{k}_{i=1}\beta_{i}x_{n-i} + \sum^{k}_{i=1}\gamma_{i}y_{n-i}}{A+\sum^{k}_{j=1}B_{j}x_{n-j} + \sum^{k}_{j=1}C_{j}y_{n-j}},\quad n\in\mathbb{N},$$
$$y_n=\frac{p+\sum^{k}_{i=1}\delta_{i}x_{n-i} + \sum^{k}_{i=1}\epsilon_{i}y_{n-i}}{q+\sum^{k}_{j=1}D_{j}x_{n-j} + \sum^{k}_{j=1}E_{j}y_{n-j}},\quad n\in\mathbb{N},$$
with non-negative parameters and non-negative initial conditions.
Also suppose that $A>0$, $I_{\gamma} \subset I_{C}$, and there exists a positive integer $\eta$, such that for every sequence $\{c_{m}\} ^{\infty}_{m=1} $ with $c_m\in I_{\beta}$ for $m=1,2, \dots $ there exists positive integers,  $
N_{1}
$,$
N_{2} \leq \eta
$ $ $, such that $ \sum^{N_{2}}_{m=N_{1}} c_{m} \in I_{B}$.\newline
Then there exists $M > 0$ and $N\in\mathbb{N}$ so that given any non-negative initial conditions, we have $x_{n}\leq M$ for all $n>N$.
\end{thm}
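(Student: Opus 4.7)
The strategy is to derive a pure difference inequality in $\{x_n\}$ alone and then invoke Theorem~1 of \cite{fpinv}. The hypothesis $I_{\gamma}\subset I_{C}$ is exactly what allows every $y_{n-i}$ appearing in the numerator of the formula for $x_n$ to be absorbed into the $y_{n-j}$ terms already present in the denominator, so that $y$ disappears from the estimate entirely. No comparison between $x_n$ and $y_n$ is available here, so this absorption trick is the only route to a closed inequality in $\{x_n\}$.

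First I would split the defining formula for $x_n$ into two fractions over the same denominator, one with the $y$-free part $\alpha+\sum \beta_{i}x_{n-i}$ on top and the other with $\sum \gamma_{i}y_{n-i}$ on top. For the first fraction, drop $\sum C_{j}y_{n-j}$ from the denominator to obtain the bound $(\alpha+\sum\beta_{i}x_{n-i})/(A+\sum B_{j}x_{n-j})$. For the second fraction, set $K=\max_{i\in I_{\gamma}}(\gamma_{i}/C_{i})$ (well defined by $I_{\gamma}\subset I_{C}$; take $K=0$ if $I_{\gamma}=\emptyset$). Then $\sum \gamma_{i}y_{n-i}\le K\sum_{i\in I_{\gamma}}C_{i}y_{n-i}\le K\sum_{j}C_{j}y_{n-j}$, so the second fraction is at most $K$. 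Adding the two bounds and clearing yields
$$x_{n}\leq\frac{(KA+\alpha)+\sum_{i=1}^{k}(KB_{i}+\beta_{i})x_{n-i}}{A+\sum_{j=1}^{k}B_{j}x_{n-j}},$$
which is a pure difference inequality in $\{x_{n}\}$ of the type handled by Theorem~1 of \cite{fpinv}, with $A>0$.

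The last step is to check that the iteration hypothesis supplied by the theorem (involving $I_{\beta}$ and $I_{B}$) implies the version needed to apply Theorem~1 of \cite{fpinv} to the inequality above. The positive-coefficient index set in the new numerator is $I_{\beta}\cup I_{B}$, while the new denominator index set is $I_{B}$. So one must show: every sequence $\{c_{m}\}$ with $c_{m}\in I_{\beta}\cup I_{B}$ admits $N_{1},N_{2}\le\eta$ with $\sum_{m=N_{1}}^{N_{2}}c_{m}\in I_{B}$. This follows by a short case split, which I expect to be the only delicate point in the whole argument: if some $c_{m}\in I_{B}$ with $m\le\eta$, take $N_{1}=N_{2}=m$; otherwise $c_{m}\in I_{\beta}$ for all $m\le\eta$, and the given hypothesis applies verbatim. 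With this verified, Theorem~1 of \cite{fpinv} delivers $M>0$ and $N\in\mathbb{N}$ with $x_{n}\le M$ for all $n>N$, which is the conclusion. Everything else is routine algebraic rearrangement.
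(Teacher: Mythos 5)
Your proof is correct and follows essentially the same route as the paper: split off the $\gamma$-terms, use $I_{\gamma}\subset I_{C}$ to bound that piece by a constant, drop the $C_{j}y_{n-j}$ terms from the remaining denominator, and invoke Theorem~1 of \cite{fpinv}. The only difference is cosmetic — you fold the constant into a single fraction and explicitly verify the iteration hypothesis for the enlarged index set $I_{\beta}\cup I_{B}$, a point the paper leaves implicit — so nothing further is needed.
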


\begin{proof}
$$x_n=\frac{\alpha+\sum^{k}_{i=1}\beta_{i}x_{n-i} + \sum^{k}_{i=1}\gamma_{i}y_{n-i}}{A+\sum^{k}_{j=1}B_{j}x_{n-j} + \sum^{k}_{j=1}C_{j}y_{n-j}}\leq $$ $$\frac{\sum^{k}_{i=1}\gamma_{i}y_{n-i}}{A+\sum^{k}_{j=1}B_{j}x_{n-j} + \sum^{k}_{j=1}C_{j}y_{n-j}} + \frac{\alpha+\sum^{k}_{i=1}\beta_{i}x_{n-i}}{A+\sum^{k}_{j=1}B_{j}x_{n-j} + \sum^{k}_{j=1}C_{j}y_{n-j}}\leq$$ $$\sum_{i\in I_{\gamma}}\frac{\gamma_{i}}{C_{i}} + \frac{\alpha+\sum^{k}_{i=1}\beta_{i}x_{n-i}}{A+\sum^{k}_{j=1}B_{j}x_{n-j}}.$$
Thus we see that the sequence $\{x_{n}\}^{\infty}_{n=1}$ satisfies the above difference inequality. Moreover using Theorem 1 in \cite{fpinv} we see that there exists $M > 0$ and $N\in\mathbb{N}$ so that given any non-negative initial conditions, we have $x_{n}\leq M$ for all $n>N$.\newline
\end{proof}

\begin{thm}
Suppose that we have a $k^{th}$ order system of two rational
difference equations
$$x_n=\frac{\alpha+\sum^{k}_{i=1}\beta_{i}x_{n-i} + \sum^{k}_{i=1}\gamma_{i}y_{n-i}}{A+\sum^{k}_{j=1}B_{j}x_{n-j} + \sum^{k}_{j=1}C_{j}y_{n-j}},\quad n\in\mathbb{N},$$
$$y_n=\frac{p+\sum^{k}_{i=1}\delta_{i}x_{n-i} + \sum^{k}_{i=1}\epsilon_{i}y_{n-i}}{q+\sum^{k}_{j=1}D_{j}x_{n-j} + \sum^{k}_{j=1}E_{j}y_{n-j}},\quad n\in\mathbb{N},$$
with non-negative parameters and non-negative initial conditions.
Also suppose that $A=0$, $\alpha =0$, $I_{\beta} \subset I_{B}$, and $ I_{\gamma} \subset I_{C}$.\newline
Then there exists $M > 0$ and $N\in\mathbb{N}$ so that given any non-negative initial conditions, we have $x_{n}\leq M$ for all $n>N$.
\end{thm}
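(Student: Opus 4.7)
The plan is to obtain an immediate, constant bound on $x_{n}$ without recourse to Theorem 1 of \cite{fpinv}. The hypotheses $\alpha = 0$, $A = 0$, $I_{\beta}\subset I_{B}$, and $I_{\gamma}\subset I_{C}$ say that every delay that contributes to the numerator of $x_{n}$ is matched, at the same delay, by a positive coefficient in the denominator, so the ratio should be controllable term by term.

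First I would use $\alpha = A = 0$ to split the single fraction into two pieces with a common denominator $D_{n} := \sum_{j=1}^{k}B_{j}x_{n-j} + \sum_{j=1}^{k}C_{j}y_{n-j}$, namely
\[
x_{n} \;=\; \frac{\sum_{i\in I_{\beta}}\beta_{i}x_{n-i}}{D_{n}} \;+\; \frac{\sum_{i\in I_{\gamma}}\gamma_{i}y_{n-i}}{D_{n}}.
\]
Then, since $D_{n}\geq\sum_{j\in I_{B}}B_{j}x_{n-j}$ and $D_{n}\geq\sum_{j\in I_{C}}C_{j}y_{n-j}$, the elementary inequality $\tfrac{a}{c+d}\leq\tfrac{a}{c}$ (valid for $a,d\geq 0$, $c>0$) yields
\[
x_{n} \;\leq\; \frac{\sum_{i\in I_{\beta}}\beta_{i}x_{n-i}}{\sum_{j\in I_{B}}B_{j}x_{n-j}} \;+\; \frac{\sum_{i\in I_{\gamma}}\gamma_{i}y_{n-i}}{\sum_{j\in I_{C}}C_{j}y_{n-j}}.
\]

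Next I would shrink each denominator to a sum over the corresponding numerator index set (legal because $I_{\beta}\subset I_{B}$ and $I_{\gamma}\subset I_{C}$) and then apply the elementary bound $\tfrac{\sum a_{i}}{\sum b_{i}}\leq \max_{i}\tfrac{a_{i}}{b_{i}}$ (for non-negative $a_{i}$, positive $b_{i}$), pairing $\beta_{i}x_{n-i}$ with $B_{i}x_{n-i}$ and $\gamma_{i}y_{n-i}$ with $C_{i}y_{n-i}$. This gives the uniform bound
\[
x_{n} \;\leq\; \max_{i\in I_{\beta}}\frac{\beta_{i}}{B_{i}} \;+\; \max_{i\in I_{\gamma}}\frac{\gamma_{i}}{C_{i}} \;=:\; M,
\]
valid for every $n\in\mathbb{N}$, so any $N$ works.

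The only real technicality I foresee is the degenerate case in which one of the reduced denominators vanishes. However, if $\sum_{j\in I_{B}}B_{j}x_{n-j} = 0$ then $x_{n-i}=0$ for every $i\in I_{\beta}$ (since $I_{\beta}\subset I_{B}$ and each $B_{i}$ there is positive), which forces $\sum_{i\in I_{\beta}}\beta_{i}x_{n-i}=0$ as well, so that summand contributes $0$ and does not affect the bound; the symmetric remark handles the $I_{C}$ denominator, and the boundary subcases $I_{\beta}=\emptyset$ or $I_{\gamma}=\emptyset$ are handled by simply dropping the missing summand. Because the bound is obtained instantaneously from the recursion, neither an iteration argument nor a bootstrap via Theorem 1 of \cite{fpinv} is needed, which is what structurally distinguishes this statement from the preceding theorems in the section.
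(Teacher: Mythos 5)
Your proof is correct and follows essentially the same route as the paper's: a direct, non-iterative algebraic estimate exploiting the fact that $I_{\beta}\subset I_{B}$ and $I_{\gamma}\subset I_{C}$ match every numerator delay with a positive denominator coefficient. The only difference is bookkeeping — you split into two fractions and obtain the constant $\max_{i\in I_{\beta}}\frac{\beta_{i}}{B_{i}}+\max_{i\in I_{\gamma}}\frac{\gamma_{i}}{C_{i}}$, whereas the paper bounds the single fraction by $\frac{\max_{i\in I_{\beta}\cup I_{\gamma}}(\beta_{i}+\gamma_{i})}{\min(\min_{j\in I_{\beta}}(B_{j}),\min_{j\in I_{\gamma}}(C_{j}))}$ — and you are in fact more careful about the degenerate cases (vanishing partial denominators, empty index sets) than the paper is.
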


\begin{proof}
$$x_n=\frac{\sum^{k}_{i=1}\beta_{i}x_{n-i} + \sum^{k}_{i=1}\gamma_{i}y_{n-i}}{\sum^{k}_{j=1}B_{j}x_{n-j} + \sum^{k}_{j=1}C_{j}y_{n-j}}\leq$$
$$ \left(\frac{\max_{i\in I_{\beta}\cup I_{\gamma}}(\beta_{i}+\gamma_{i})}{\min(\min_{j\in I_{\beta}}(B_{j}),\min_{j\in I_{\gamma}}(C_{j}))}\right)\frac{\sum_{i\in I_{\beta}}x_{n-i} + \sum_{i\in I_{\gamma}}y_{n-i}}{\sum_{j\in I_{\beta}}x_{n-j} + \sum_{j\in I_{\gamma}}y_{n-j}}=$$
$$\frac{\max_{i\in I_{\beta}\cup I_{\gamma}}(\beta_{i}+\gamma_{i})}{\min(\min_{j\in I_{\beta}}(B_{j}),\min_{j\in I_{\gamma}}(C_{j}))}.$$
\end{proof}

\begin{thm}
Suppose that we have a $k^{th}$ order system of two rational
difference equations
$$x_n=\frac{\alpha+\sum^{k}_{i=1}\beta_{i}x_{n-i} + \sum^{k}_{i=1}\gamma_{i}y_{n-i}}{A+\sum^{k}_{j=1}B_{j}x_{n-j} + \sum^{k}_{j=1}C_{j}y_{n-j}},\quad n\in\mathbb{N},$$
$$y_n=\frac{p+\sum^{k}_{i=1}\delta_{i}x_{n-i} + \sum^{k}_{i=1}\epsilon_{i}y_{n-i}}{q+\sum^{k}_{j=1}D_{j}x_{n-j} + \sum^{k}_{j=1}E_{j}y_{n-j}},\quad n\in\mathbb{N},$$
with non-negative parameters and non-negative initial conditions.
Also suppose that $\{y_{n}\}^{\infty}_{n=1}$ is bounded above and below, $I_{C}\neq \emptyset$, and there exists a positive integer $\eta$, such that for every sequence $\{c_{m}\} ^{\infty}_{m=1} $ with $c_m\in I_{\beta}$ for $m=1,2, \dots $ there exists positive integers,  $
N_{1}
$,$
N_{2} \leq \eta
$ $ $, such that $ \sum^{N_{2}}_{m=N_{1}} c_{m} \in I_{B}$.\newline
Then there exists $M > 0$ and $N\in\mathbb{N}$ so that given any non-negative initial conditions, we have $x_{n}\leq M$ for all $n>N$.
\end{thm}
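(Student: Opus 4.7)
The plan is to reduce this to the single-equation difference inequality of Theorem~1 in \cite{fpinv} by absorbing the $y$-dependence into constants. Since $\{y_n\}$ is bounded above and below, fix positive constants $\ell, L$ with $\ell \le y_n \le L$ for all $n$. I would first use the upper bound $y_{n-i}\le L$ in the numerator of $x_n$, which gives
$$
\sum_{i=1}^{k}\gamma_i y_{n-i} \;\le\; L\sum_{i=1}^{k}\gamma_i,
$$
a constant that can be absorbed into the $\alpha$ term.

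Next, since $I_C\neq\emptyset$, I would use the positive lower bound $y_{n-j}\ge\ell$ in the denominator to obtain
$$
\sum_{j=1}^{k}C_j y_{n-j} \;\ge\; \ell\sum_{j\in I_C}C_j \;>\; 0,
$$
which can be absorbed into the $A$ term to produce an effective constant $A'>0$ even if $A=0$. Setting $\alpha' = \alpha + L\sum_{i=1}^{k}\gamma_i$ and $A' = A + \ell\sum_{j\in I_C}C_j$, these two estimates combine to yield the difference inequality
$$
x_n \;\le\; \frac{\alpha' + \sum_{i=1}^{k}\beta_i x_{n-i}}{A' + \sum_{j=1}^{k}B_j x_{n-j}},
$$
which involves only the sequence $\{x_n\}$ and has a strictly positive constant in its denominator.

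Finally, the hypothesis of the theorem on $I_\beta$ and $I_B$ (existence of the uniform $\eta$) is exactly the index condition required to invoke Theorem~1 of \cite{fpinv} for this reduced inequality. Applying that theorem produces $M>0$ and $N\in\mathbb{N}$ such that $x_n\le M$ for all $n>N$, which is the desired conclusion. The only mildly delicate point is making sure the lower bound on $\{y_n\}$ is strictly positive so that $A'>0$; I read ``bounded above and below'' in the sense used throughout this paper (a positive lower bound), and $I_C\neq\emptyset$ is what lets the lower bound actually contribute a positive constant to the denominator, so no further work is needed.
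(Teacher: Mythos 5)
Your proposal is correct and follows essentially the same route as the paper: the paper likewise writes $m_1\le y_n\le m_2$, bounds the numerator terms $\sum_i\gamma_i y_{n-i}$ by the constant $\bigl(\sum_i\gamma_i\bigr)m_2$, keeps $\bigl(\sum_j C_j\bigr)m_1>0$ in the denominator (which is where $I_C\neq\emptyset$ and the positive lower bound enter), and then invokes Theorem~1 of \cite{fpinv} on the resulting inequality in $\{x_n\}$ alone. Your reading of ``bounded below'' as a strictly positive lower bound matches the paper's implicit use of $m_1>0$, so no gap remains.
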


\begin{proof}
We know that $\{y_{n}\}^{\infty}_{n=1}$ is bounded above and below let us call those bounds $m_{1}\leq y_{n}\leq m_{2}$. This means that we have
$$x_n=\frac{\alpha+\sum^{k}_{i=1}\beta_{i}x_{n-i} + \sum^{k}_{i=1}\gamma_{i}y_{n-i}}{A+ \sum^{k}_{j=1}B_{j}x_{n-j} + \sum^{k}_{j=1}C_{j}y_{n-j}}\leq \frac{\alpha+\sum^{k}_{i=1}\beta_{i}x_{n-i} + \left(\sum^{k}_{i=1}\gamma_{i}\right)m_{2}}{\left(\sum^{k}_{j=1}C_{j}\right) m_{1} +\sum^{k}_{j=1}B_{j}x_{n-j}}$$
Thus we see that the sequence $\{x_{n}\}^{\infty}_{n=1}$ satisfies the above difference inequality. Moreover using Theorem 1 in \cite{fpinv} we see that there exists $M > 0$ and $N\in\mathbb{N}$ so that given any non-negative initial conditions, we have $x_{n}\leq M$ for all $n>N$.\newline
\end{proof}

\begin{thm}
Suppose that we have a $k^{th}$ order system of two rational
difference equations
$$x_n=\frac{\alpha+\sum^{k}_{i=1}\beta_{i}x_{n-i} + \sum^{k}_{i=1}\gamma_{i}y_{n-i}}{A+\sum^{k}_{j=1}B_{j}x_{n-j} + \sum^{k}_{j=1}C_{j}y_{n-j}},\quad n\in\mathbb{N},$$
$$y_n=\frac{p+\sum^{k}_{i=1}\delta_{i}x_{n-i} + \sum^{k}_{i=1}\epsilon_{i}y_{n-i}}{q+\sum^{k}_{j=1}D_{j}x_{n-j} + \sum^{k}_{j=1}E_{j}y_{n-j}},\quad n\in\mathbb{N},$$
with non-negative parameters and non-negative initial conditions.
Also suppose that $A=0$ and one of the following holds
\begin{enumerate}[(i)]
\item $I_{B}\subset I_{\beta}$, $I_{C}\subset I_{\gamma}$ and $I_{B}\neq\emptyset$
\item $q=0$, $I_{D}\subset I_{\delta}$, $I_{E}\subset I_{\epsilon}$, and $I_{C}\neq\emptyset$
\item $p,q>0$, $I_{D}\subset I_{\delta}$, $I_{E}\subset I_{\epsilon}$, and $I_{C}\neq\emptyset$
\end{enumerate}
$I_{\gamma} \subset I_{C}$, and there exists a positive integer $\eta$, such that for every sequence $\{c_{m}\} ^{\infty}_{m=1} $ with $c_m\in I_{\beta}$ for $m=1,2, \dots $ there exists positive integers,  $
N_{1}
$,$
N_{2} \leq \eta
$ $ $, such that $ \sum^{N_{2}}_{m=N_{1}} c_{m} \in I_{B}$.\newline
Then there exists $M > 0$ and $N\in\mathbb{N}$ so that given any non-negative initial conditions, we have $x_{n}\leq M$ for all $n>N$.
\end{thm}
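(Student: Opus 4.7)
The plan is to combine the lower-bound technique developed in Theorems~3 and~4 with the additive-split trick used in Theorem~7, producing a rational difference inequality in $x_n$ alone to which Theorem~1 of \cite{fpinv} applies.

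The first step is to extract, from whichever of the three listed cases is in force, a positive uniform lower bound for one of the sequences. In case~(i), the assumption $A=0$ together with $I_B\subset I_\beta$ and $I_C\subset I_\gamma$ lets one discard the extra numerator terms in $x_n$ and write
$$x_n \geq \frac{\sum_{j\in I_B}\beta_j x_{n-j}+\sum_{j\in I_C}\gamma_j y_{n-j}}{\sum_{j\in I_B}B_j x_{n-j}+\sum_{j\in I_C}C_j y_{n-j}} \geq \min\Bigl(\min_{j\in I_B}\tfrac{\beta_j}{B_j},\min_{j\in I_C}\tfrac{\gamma_j}{C_j}\Bigr),$$
giving $x_n\geq c_1$ for some positive constant $c_1$. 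The same manoeuvre applied to $y_n$ (using $q=0$, $I_D\subset I_\delta$, and $I_E\subset I_\epsilon$) produces a positive lower bound for $\{y_n\}$ in case~(ii); in case~(iii) one additionally carries $p$ and $q$ through the estimate, exactly as in the third sub-case of Theorem~3, to obtain $y_n\geq\min(p/q,\min_{j\in I_D}\delta_j/D_j,\min_{j\in I_E}\epsilon_j/E_j)>0$.

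The second step turns this sequence lower bound into an additive positive constant inside the denominator of $x_n$. In case~(i), $I_B\neq\emptyset$ together with $x_n\geq c_1$ gives $\sum_j B_j x_{n-j}\geq c_1\sigma_B$, where $\sigma_B=\sum_{j\in I_B}B_j>0$, and hence $\sum_j B_j x_{n-j}\geq\tfrac12 c_1\sigma_B+\tfrac12\sum_j B_j x_{n-j}$. In cases~(ii) and~(iii), the nonempty $I_C$ and the lower bound $y_n\geq c_1$ deliver the analogous inequality $\sum_j C_j y_{n-j}\geq c_1\sigma_C$ with $\sigma_C=\sum_{j\in I_C}C_j>0$.

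The third step is the split used in Theorem~7. Using $I_\gamma\subset I_C$ one writes
$$x_n \leq \frac{\sum_{i\in I_\gamma}\gamma_i y_{n-i}}{\sum_{j=1}^{k}C_j y_{n-j}}+\frac{\alpha+\sum_{i=1}^{k}\beta_i x_{n-i}}{\sum_{j=1}^{k}B_j x_{n-j}+\sum_{j=1}^{k}C_j y_{n-j}}\leq \sum_{i\in I_\gamma}\frac{\gamma_i}{C_i}+\frac{\alpha+\sum_{i=1}^{k}\beta_i x_{n-i}}{\sum_{j=1}^{k}B_j x_{n-j}+\sum_{j=1}^{k}C_j y_{n-j}},$$
after which the denominator of the second fraction is replaced by $\tfrac12 c_1\sigma_B+\tfrac12\sum_j B_j x_{n-j}$ in case~(i) (discarding $\sum_j C_j y_{n-j}$ first) or by $c_1\sigma_C+\sum_j B_j x_{n-j}$ in cases~(ii)/(iii). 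In every case the surviving inequality is a rational difference inequality in $x_n$ alone with a positive additive constant in the denominator, and the hypothesized $\eta$-condition on partial sums of elements of $I_\beta$ landing in $I_B$ is precisely what Theorem~1 of \cite{fpinv} needs to conclude $x_n\leq M$ for all $n>N$. The main obstacle is pure bookkeeping: ensuring that in each of the three sub-cases the lower bound on the appropriate sequence translates into a positive additive constant at the correct location in the final inequality.
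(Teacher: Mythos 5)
Your proposal is correct and follows essentially the same route as the paper's own proof: establish a positive lower bound on $x_n$ (case (i)) or $y_n$ (cases (ii)--(iii)) via the appropriate inclusion hypotheses, convert that lower bound into a positive additive constant in the denominator, split off the $\gamma$-terms using $I_{\gamma}\subset I_{C}$ to get the constant $\sum_{i\in I_{\gamma}}\gamma_{i}/C_{i}$, and apply Theorem 1 of the cited inequality paper to the resulting difference inequality in $x_n$ alone. The only differences are cosmetic (you use mediant-type ratio bounds and $\sigma_B$, $\sigma_C$ where the paper uses min-over-max constants and the $\min_{+}$ device).
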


\begin{proof}
Suppose $I_{B}\subset I_{\beta}$, $I_{C}\subset I_{\gamma}$, and $I_{B}\neq\emptyset$ then we have
$$x_{n}=\frac{\alpha+\sum^{k}_{i=1}\beta_{i}x_{n-i} + \sum^{k}_{i=1}\gamma_{i}y_{n-i}}{\sum^{k}_{j=1}B_{j}x_{n-j} + \sum^{k}_{j=1}C_{j}y_{n-j}}\geq \frac{\sum_{i\in I_{\beta}}\beta_{i}x_{n-i} + \sum_{i\in I_{\gamma}}\gamma_{i}y_{n-i}}{\sum_{j\in I_{\beta}}B_{j}x_{n-j}+\sum_{j\in I_{\gamma}}C_{j}y_{n-j}}$$
$$\geq \frac{\min(\min_{i\in I_{\beta}}(\beta_{i}),\min_{i\in I_{\gamma}}(\gamma_{i}))}{\max(\max_{j\in I_{\beta}}(B_{j}),\max_{j\in I_{\gamma}}(C_{j}))}.$$
So in this case $\{x_{n}\}$ is bounded below by a constant.
Now suppose $q=0$, $I_{D}\subset I_{\delta}$, $I_{E}\subset I_{\epsilon}$, and $I_{C}\neq\emptyset$ then we have
$$y_{n}=\frac{p+\sum^{k}_{i=1}\delta_{i}x_{n-i} + \sum^{k}_{i=1}\epsilon_{i}y_{n-i}}{\sum^{k}_{j=1}D_{j}x_{n-j} + \sum^{k}_{j=1}E_{j}y_{n-j}}\geq \frac{\sum_{i\in I_{\delta}}\delta_{i}x_{n-i} + \sum_{i\in I_{\epsilon}}\epsilon_{i}y_{n-i}}{\sum_{j\in I_{\delta}}D_{j}x_{n-j}+\sum_{j\in I_{\epsilon}}E_{j}y_{n-j}}$$
$$\geq \frac{\min(\min_{i\in I_{\delta}}(\delta_{i}),\min_{i\in I_{\epsilon}}(\epsilon_{i}))}{\max(\max_{j\in I_{\delta}}(D_{j}),\max_{j\in I_{\epsilon}}(E_{j}))}.$$
So in this case $\{y_{n}\}$ is bounded below by a constant.
Now suppose $p,q>0$, $I_{D}\subset I_{\delta}$, $I_{E}\subset I_{\epsilon}$, and $I_{C}\neq\emptyset$ then we have
$$y_{n}=\frac{p+\sum^{k}_{i=1}\delta_{i}x_{n-i} + \sum^{k}_{i=1}\epsilon_{i}y_{n-i}}{q+\sum^{k}_{j=1}D_{j}x_{n-j} + \sum^{k}_{j=1}E_{j}y_{n-j}}\geq \frac{p+\sum_{i\in I_{\delta}}\delta_{i}x_{n-i} + \sum_{i\in I_{\epsilon}}\epsilon_{i}y_{n-i}}{q+\sum_{j\in I_{\delta}}D_{j}x_{n-j}+\sum_{j\in I_{\epsilon}}E_{j}y_{n-j}}$$
$$\geq \frac{\min(p,\min_{i\in I_{\delta}}(\delta_{i}),\min_{i\in I_{\epsilon}}(\epsilon_{i}))}{\max(q,\max_{j\in I_{\delta}}(D_{j}),\max_{j\in I_{\epsilon}}(E_{j}))}.$$
So in this case $\{y_{n}\}$ is bounded below by a constant.
Now we have shown that there exists $A_{2}>0$, so that $\{x_{n}\}$ is bounded below by $A_{2}$ and $I_{B}\neq\emptyset$ or $\{y_{n}\}$ is bounded below by $A_{2}$ and $I_{C}\neq\emptyset$.
We use this fact to show the following.
$$x_n=\frac{\alpha+\sum^{k}_{i=1}\beta_{i}x_{n-i} + \sum^{k}_{i=1}\gamma_{i}y_{n-i}}{\sum^{k}_{j=1}B_{j}x_{n-j} + \sum^{k}_{j=1}C_{j}y_{n-j}}\leq \frac{\sum^{k}_{i=1}\gamma_{i}y_{n-i}}{\sum^{k}_{j=1}B_{j}x_{n-j} + \sum^{k}_{j=1}C_{j}y_{n-j}}$$ $$ + \frac{\alpha+\sum^{k}_{i=1}\beta_{i}x_{n-i}}{\min_+(\sum^{k}_{j=1}B_{j},\sum^{k}_{j=1}C_{j})\frac{A_{2}}{2}+\frac{1}{2}\sum^{k}_{j=1}B_{j}x_{n-j} + \frac{1}{2}\sum^{k}_{j=1}C_{j}y_{n-j}}$$ $$\leq \sum_{i\in I_{\gamma}}\frac{\gamma_{i}}{C_{i}} + \frac{\alpha+\sum^{k}_{i=1}\beta_{i}x_{n-i}}{\min_+(\sum^{k}_{j=1}B_{j},\sum^{k}_{j=1}C_{j})\frac{A_{2}}{2}+\frac{1}{2}\sum^{k}_{j=1}B_{j}x_{n-j}}.$$
Thus we see that the sequence $\{x_{n}\}^{\infty}_{n=1}$ satisfies the above difference inequality. Moreover using Theorem 1 in \cite{fpinv} we see that there exists $M > 0$ and $N\in\mathbb{N}$ so that given any non-negative initial conditions, we have $x_{n}\leq M$ for all $n>N$.\newline
\end{proof}

\begin{thm}
Suppose that we have a $k^{th}$ order system of two rational
difference equations
$$x_n=\frac{\alpha+\sum^{k}_{i=1}\beta_{i}x_{n-i} + \sum^{k}_{i=1}\gamma_{i}y_{n-i}}{A+\sum^{k}_{j=1}B_{j}x_{n-j} + \sum^{k}_{j=1}C_{j}y_{n-j}},\quad n\in\mathbb{N},$$
$$y_n=\frac{p+\sum^{k}_{i=1}\delta_{i}x_{n-i} + \sum^{k}_{i=1}\epsilon_{i}y_{n-i}}{q+\sum^{k}_{j=1}D_{j}x_{n-j} + \sum^{k}_{j=1}E_{j}y_{n-j}},\quad n\in\mathbb{N},$$
with non-negative parameters and non-negative initial conditions.
Also suppose that $\{y_{n}\}^{\infty}_{n=1}$ is bounded above, $A>0$, and there exists a positive integer $\eta$, such that for every sequence $\{c_{m}\} ^{\infty}_{m=1} $ with $c_m\in I_{\beta}$ for $m=1,2, \dots $ there exists positive integers,  $
N_{1}
$,$
N_{2} \leq \eta
$ $ $, such that $ \sum^{N_{2}}_{m=N_{1}} c_{m} \in I_{B}$.\newline
Then there exists $M > 0$ and $N\in\mathbb{N}$ so that given any non-negative initial conditions, we have $x_{n}\leq M$ for all $n>N$.
\end{thm}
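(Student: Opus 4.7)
The plan is to reduce the boundedness question for $\{x_n\}$ to a one-variable recursive inequality in $\{x_n\}$ alone, and then invoke Theorem 1 of \cite{fpinv} exactly as in the previous theorems of this section. The combinatorial hypothesis on $I_{\beta}$ and $I_B$ in the statement is precisely the hypothesis needed to apply that theorem, so the only real work is producing the inequality.

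First I would use the upper bound on $\{y_n\}$, say $y_n \leq m_2$ for all $n$, to control the $y$-terms that appear in the numerator of $x_n$: replace each $y_{n-i}$ by $m_2$ in $\sum_i \gamma_i y_{n-i}$, obtaining a numerator of the form $(\alpha + m_2\sum_i \gamma_i) + \sum_i \beta_i x_{n-i}$. Next, to eliminate the $y$-terms in the denominator I would simply drop them (they are non-negative, so removing them only decreases the denominator and therefore preserves the direction of the inequality). This yields
$$x_n \;\leq\; \frac{\bigl(\alpha + m_2\sum_{i=1}^k \gamma_i\bigr) + \sum_{i=1}^k \beta_i x_{n-i}}{A + \sum_{j=1}^k B_j x_{n-j}},$$
which is a purely one-dimensional rational difference inequality in $\{x_n\}$ with a modified but still finite constant $\tilde{\alpha} := \alpha + m_2 \sum_i \gamma_i \geq 0$, positive constant term $A>0$ in the denominator, and the same index sets $I_\beta$ and $I_B$ as before.

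Notice that the hypothesis of the theorem — that for every sequence $\{c_m\}$ with $c_m \in I_\beta$ there exist $N_1, N_2 \leq \eta$ with $\sum_{m=N_1}^{N_2} c_m \in I_B$ — is exactly the combinatorial condition required by Theorem 1 of \cite{fpinv} applied to the inequality above. Thus that theorem directly gives constants $M>0$ and $N\in\mathbb{N}$ such that $x_n \leq M$ for all $n > N$, independently of initial conditions, which is the desired conclusion.

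There is no real obstacle here; this is the simplest variant in the section, essentially the ``Theorem 9'' pattern but with an a priori upper bound on $\{y_n\}$ replacing the two-sided comparison $M_1 y_n \leq x_n \leq M_2 y_n$. The only thing to double-check is that dropping the $C_j y_{n-j}$ terms in the denominator together with the crude replacement $y_{n-i} \leq m_2$ in the numerator is legitimate, which it is since all quantities are non-negative and $A>0$ ensures the denominator of the resulting inequality stays bounded away from zero.
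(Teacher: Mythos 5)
Your proposal is correct and follows essentially the same route as the paper: bound the $y$-terms using $y_n\leq m_2$, discard the non-negative $C_j y_{n-j}$ terms in the denominator, and apply Theorem 1 of \cite{fpinv} to the resulting one-variable rational difference inequality in $\{x_n\}$. The only cosmetic difference is that you absorb $m_2\sum_i\gamma_i$ into the numerator constant $\tilde{\alpha}$, whereas the paper splits it off as a separate additive constant $\sum_i\gamma_i m_3/A$; the two forms are interchangeable and both fall under the cited theorem with the same index sets $I_{\beta}$ and $I_{B}$.
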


\begin{proof}
We know that $\{y_{n}\}^{\infty}_{n=1}$ is bounded above, let us call that bound $y_{n}\leq m_{3}$.
$$x_n=\frac{\alpha+\sum^{k}_{i=1}\beta_{i}x_{n-i} + \sum^{k}_{i=1}\gamma_{i}y_{n-i}}{A+\sum^{k}_{j=1}B_{j}x_{n-j} + \sum^{k}_{j=1}C_{j}y_{n-j}}\leq $$ $$\frac{\sum^{k}_{i=1}\gamma_{i}m_{3}}{A} + \frac{\alpha+\sum^{k}_{i=1}\beta_{i}x_{n-i}}{A+\sum^{k}_{j=1}B_{j}x_{n-j}}.$$
Thus we see that the sequence $\{x_{n}\}^{\infty}_{n=1}$ satisfies the above difference inequality. Moreover using Theorem 1 in \cite{fpinv} we see that there exists $M > 0$ and $N\in\mathbb{N}$ so that given any non-negative initial conditions, we have $x_{n}\leq M$ for all $n>N$.\newline
\end{proof}

\begin{thm}
Suppose that we have a $k^{th}$ order system of two rational
difference equations
$$x_n=\frac{\alpha+\sum^{k}_{i=1}\beta_{i}x_{n-i} + \sum^{k}_{i=1}\gamma_{i}y_{n-i}}{A+\sum^{k}_{j=1}B_{j}x_{n-j} + \sum^{k}_{j=1}C_{j}y_{n-j}},\quad n\in\mathbb{N},$$
$$y_n=\frac{p+\sum^{k}_{i=1}\delta_{i}x_{n-i} + \sum^{k}_{i=1}\epsilon_{i}y_{n-i}}{q+\sum^{k}_{j=1}D_{j}x_{n-j} + \sum^{k}_{j=1}E_{j}y_{n-j}},\quad n\in\mathbb{N},$$
with non-negative parameters and non-negative initial conditions. Assume that $A,q>0$, $I_{\gamma} \subset I_{C}$, and there exists a positive integer $\eta$, such that for every sequence $\{c_{m}\} ^{\infty}_{m=1} $ with $c_m\in I_{\beta}$ for $m=1,2, \dots $ there exists positive integers,  $
N_{1}
$,$
N_{2} \leq \eta
$ $ $, such that $ \sum^{N_{2}}_{m=N_{1}} c_{m} \in I_{B}$.\newline
Further assume that there exists a positive integer $\eta_{2}$, such that for every sequence $\{d_{m}\} ^{\infty}_{m=1} $ with $d_m\in I_{\epsilon}$ for $m=1,2, \dots $ there exists positive integers,  $
N_{3}
$,$
N_{4} \leq \eta_{2}
$ $ $, such that $ \sum^{N_{4}}_{m=N_{3}} d_{m} \in I_{E}$.
Then there exists $M > 0$ and $N\in\mathbb{N}$ so that given any non-negative initial conditions, we have $x_{n},y_{n}\leq M$ for all $n>N$.
\end{thm}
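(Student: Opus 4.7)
The plan is to decouple the system by first producing a uniform upper bound on $\{x_n\}$, then substituting that bound into the $y$-equation to reduce it to a purely scalar rational difference inequality in $\{y_n\}$, and finally invoking Theorem 1 of \cite{fpinv} once more to finish.

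First I would observe that the hypotheses placed on the $x$-equation in this theorem, namely $A>0$, $I_\gamma \subset I_C$, and the $\eta$-iteration condition carrying $I_\beta$ into $I_B$, are exactly those of the first theorem of this section (the one establishing $x_n \leq M$ under those three assumptions). Applying that theorem directly yields constants $M_x > 0$ and $N_x \in \mathbb{N}$ with $x_n \leq M_x$ for every $n > N_x$.

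Next, using $x_{n-i} \leq M_x$ for $n > N_x + k$ and dropping the non-negative terms $D_j x_{n-j}$ from the denominator of the $y$-equation, I would obtain the scalar inequality
$$y_n \leq \frac{\bigl(p + M_x \sum_{i=1}^{k} \delta_i\bigr) + \sum_{i=1}^{k} \epsilon_i y_{n-i}}{q + \sum_{j=1}^{k} E_j y_{n-j}}.$$
This fits Theorem 1 of \cite{fpinv} exactly: the positive constant $q$ plays the role of the denominator constant, $p + M_x \sum \delta_i \geq 0$ plays the role of the numerator constant, the coefficients $(\epsilon_i, E_j)$ play the role of $(\beta_i, B_j)$, and the $\eta_2$-iteration hypothesis on $I_\epsilon \to I_E$ is precisely the combinatorial condition Theorem 1 of \cite{fpinv} requires. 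Invoking that result yields $M_y > 0$ and $N_y \in \mathbb{N}$ with $y_n \leq M_y$ for all $n > N_y$, and taking $M := \max(M_x, M_y)$ and $N := \max(N_x, N_y)$ finishes the argument.

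I do not anticipate any genuine obstacle; the proof is essentially a two-step cascade in which each stage is already covered by a theorem available to us. The one minor check worth being explicit about is that the scalar majorant for $y_n$ really does satisfy the hypothesis format of Theorem 1 of \cite{fpinv}, and in particular that the positivity of $q$ (not of $p$) is what is needed there, since we are absorbing the $x$-dependence of the numerator into a non-negative constant rather than a strictly positive one.
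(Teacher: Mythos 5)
Your proposal is correct and follows essentially the same route as the paper: the paper's proof also first invokes the earlier theorem of this section (Theorem 7) to bound $\{x_{n}\}$, then substitutes that bound into the numerator of the $y$-equation while discarding the $\sum_{j}D_{j}x_{n-j}$ term from the denominator, and applies Theorem 1 of \cite{fpinv} with the $\eta_{2}$-condition to bound $\{y_{n}\}$. Your added remarks about the index shift and about $q>0$ being the relevant positivity are harmless refinements of the same argument.
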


\begin{proof}
By Theorem 7 we get immediately that there exists $M_{1} > 0$ and $N\in\mathbb{N}$ so that given any non-negative initial conditions, we have $x_{n}\leq M_{1}$ for all $n>N$. We now use this fact as follows
$$y_{n}=\frac{p+\sum^{k}_{i=1}\delta_{i}x_{n-i} + \sum^{k}_{i=1}\epsilon_{i}y_{n-i}}{q+\sum^{k}_{j=1}D_{j}x_{n-j} + \sum^{k}_{j=1}E_{j}y_{n-j}}\leq \frac{p+\sum^{k}_{i=1}\delta_{i}M_{1} + \sum^{k}_{i=1}\epsilon_{i}y_{n-i}}{q+ \sum^{k}_{j=1}E_{j}y_{n-j}}.$$
Thus we see that the sequence $\{y_{n}\}^{\infty}_{n=1}$ satisfies the above difference inequality. Moreover using Theorem 1 in \cite{fpinv} we see that there exists $M_{2} > 0$ and $N\in\mathbb{N}$ so that given any non-negative initial conditions, we have $y_{n}\leq M_{2}$ for all $n>N$.\newline
\end{proof}

\section{Some Boundedness Results Involving a One Sided Comparison}

For the results in this section, we either assume that there exists $M_{1}>0$ so that $y_{n}\leq M_{1}x_{n}$ for all $n\in\mathbb{N}$, or we assume that there exists $M_{1}>0$ and $M_{2}\geq 0$ so that $y_{n}\leq M_{1}x_{n} + M_{2}$ for all $n\in\mathbb{N}$. We use these one sided comparisons and Theorem 1 of \cite{fpinv} to prove that every solution is bounded for certain systems of rational difference equations.

For the first two results we assume that $y_{n}\leq M_{1}x_{n}$ for some $M_{1}>0$. No iteration is used in the next result.

\begin{thm}
Suppose that we have a $k^{th}$ order system of two rational
difference equations
$$x_n=\frac{\alpha+\sum^{k}_{i=1}\beta_{i}x_{n-i} + \sum^{k}_{i=1}\gamma_{i}y_{n-i}}{A+\sum^{k}_{j=1}B_{j}x_{n-j} + \sum^{k}_{j=1}C_{j}y_{n-j}},\quad n\in\mathbb{N},$$
$$y_n=\frac{p+\sum^{k}_{i=1}\delta_{i}x_{n-i} + \sum^{k}_{i=1}\epsilon_{i}y_{n-i}}{q+\sum^{k}_{j=1}D_{j}x_{n-j} + \sum^{k}_{j=1}E_{j}y_{n-j}},\quad n\in\mathbb{N},$$
with non-negative parameters and non-negative initial conditions. Assume that there exists $M_{1}>0$ so that $y_{n}\leq M_{1}x_{n}$ for all $n\in\mathbb{N}$.
Further assume that $A=0$, $\alpha =0$, $I_{\beta}\cup(I_{\gamma}\setminus I_{C})\subset I_{B}$.
Then there exists $M > 0$ and $N\in\mathbb{N}$ so that given any non-negative initial conditions, we have $x_{n},y_{n}\leq M$ for all $n>N$.
\end{thm}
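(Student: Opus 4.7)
The strategy mirrors the no-iteration proofs of Theorem 6 and Theorem 8: I would manipulate the right-hand side for $x_n$ into a uniform constant bound using only the structural inclusions and the one-sided comparison, and then transfer the bound to $y_n$ via $y_n \le M_1 x_n$.

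First I would split the numerator sum over $\gamma$ using the disjoint decomposition $I_\gamma = (I_\gamma \cap I_C) \cup (I_\gamma \setminus I_C)$, and apply $y_{n-i} \le M_1 x_{n-i}$ to the terms with $i \in I_\gamma \setminus I_C$, converting them into $x$-terms. Since $\alpha = A = 0$, this yields $x_n \le (N_1 + N_2)/(D_1 + D_2)$, where $N_1 = \sum_{i \in I_\beta \cup (I_\gamma \setminus I_C)} K_i\, x_{n-i}$ with $K_i := \beta_i + M_1 \gamma_i$ (adopting $\beta_i = 0$ for $i \notin I_\beta$ and $\gamma_i = 0$ for $i \notin I_\gamma$), $N_2 = \sum_{i \in I_\gamma \cap I_C} \gamma_i\, y_{n-i}$, $D_1 = \sum_{j \in I_B} B_j\, x_{n-j}$, and $D_2 = \sum_{j \in I_C} C_j\, y_{n-j}$.

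Next, I would match each numerator term with a denominator term of the same lag and variable. For $N_1$, every index $i$ lies in $I_B$ by the hypothesis $I_\beta \cup (I_\gamma \setminus I_C) \subset I_B$, so $K_i\, x_{n-i} \le L_1\, B_i\, x_{n-i}$ with $L_1 := \max_i K_i/B_i$ (the maximum taken over $I_\beta \cup (I_\gamma \setminus I_C)$); summing gives $N_1 \le L_1 D_1$. Trivially $N_2 \le L_2 D_2$ with $L_2 := \max_{i \in I_\gamma \cap I_C} \gamma_i/C_i$, because $I_\gamma \cap I_C \subset I_C$. Combining, $x_n \le (L_1 D_1 + L_2 D_2)/(D_1 + D_2) \le \max(L_1, L_2)$, a constant independent of the initial conditions, and then $y_n \le M_1 \max(L_1, L_2)$.

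The only real obstacle is bookkeeping around empty index sets. If $I_\beta \cup (I_\gamma \setminus I_C)$ is empty then $N_1 = 0$ and $L_1$ is vacuous, so only the $L_2$-term contributes; symmetrically if $I_\gamma \cap I_C$ is empty. If both are empty then the numerator of $x_n$ vanishes identically (since also $\alpha = 0$), so $x_n \equiv 0$, and the comparison $y_n \le M_1 x_n$ finishes the argument. Notably Theorem 1 of \cite{fpinv} is not invoked here: the bound is purely algebraic, so the threshold $N$ can be taken to be any index past which the recursion is well-defined.
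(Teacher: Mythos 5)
Your proposal is correct and follows essentially the same route as the paper: both split $I_\gamma$ into $I_\gamma\cap I_C$ (whose terms are absorbed against the matching $C_iy_{n-i}$ in the denominator) and $I_\gamma\setminus I_C$ (whose terms are converted to $x$-terms via $y_{n-i}\le M_1x_{n-i}$ and then matched against $B_ix_{n-i}$ using $I_\beta\cup(I_\gamma\setminus I_C)\subset I_B$), obtaining a purely algebraic constant bound on $x_n$ with no appeal to the iteration theorem, and then transferring it to $y_n$. The only difference is cosmetic: you finish with the mediant-type inequality $\frac{N_1+N_2}{D_1+D_2}\le\max(L_1,L_2)$, whereas the paper bounds the two summands $\frac{N_2}{D_1+D_2}+\frac{N_1}{D_1}$ separately.
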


\begin{proof}
Since we have assumed $y_{n}\leq M_{1}x_{n}$ it suffices to find a bound for $\{x_{n}\}^{\infty}_{n=1}$. Notice that
$$x_n=\frac{\sum^{k}_{i=1}\beta_{i}x_{n-i} + \sum^{k}_{i=1}\gamma_{i}y_{n-i}}{\sum^{k}_{j=1}B_{j}x_{n-j} + \sum^{k}_{j=1}C_{j}y_{n-j}}\leq \sum_{i\in I_{C}\cap I_{\gamma}}\frac{\gamma_{i}}{C_{i}}+\frac{\sum^{k}_{i=1}\beta_{i}x_{n-i} + \sum_{i\in I_{\gamma}\setminus I_{C}}\gamma_{i}M_{1}x_{n-i}}{\sum^{k}_{j=1}B_{j}x_{n-j}}$$
$$\leq \sum_{i\in I_{C}\cap I_{\gamma}}\frac{\gamma_{i}}{C_{i}}+\frac{(1+M_{1})\max_{i\in I_{\beta}\cup(I_{\gamma}\setminus I_{C})}(\beta_{i}+\gamma_{i})}{\min_{i\in I_{\beta}\cup(I_{\gamma}\setminus I_{C})}(B_{i})}.$$
So $x_{n}\leq M$ for all $n>N$. Since we have assumed that there exists $M_{1}>0$ so that $y_{n}\leq M_{1}x_{n}$ for all $n\in\mathbb{N}$, we obtain the full result.\newline
\end{proof}

In the following result we use Theorem 1 in \cite{fpinv} to iterate, first with respect to $y_{n}$ and then with respect to $x_{n}$.

\begin{thm}
Suppose that we have a $k^{th}$ order system of two rational
difference equations
$$x_n=\frac{\alpha+\sum^{k}_{i=1}\beta_{i}x_{n-i} + \sum^{k}_{i=1}\gamma_{i}y_{n-i}}{A+\sum^{k}_{j=1}B_{j}x_{n-j} + \sum^{k}_{j=1}C_{j}y_{n-j}},\quad n\in\mathbb{N},$$
$$y_n=\frac{p+\sum^{k}_{i=1}\delta_{i}x_{n-i} + \sum^{k}_{i=1}\epsilon_{i}y_{n-i}}{q+\sum^{k}_{j=1}D_{j}x_{n-j} + \sum^{k}_{j=1}E_{j}y_{n-j}},\quad n\in\mathbb{N},$$
with non-negative parameters and non-negative initial conditions. Assume that there exists $M_{1}>0$ so that $y_{n}\leq M_{1}x_{n}$ for all $n\in\mathbb{N}$.
Further assume that $A=0$,$q>0$, one of the following holds
\begin{enumerate}[(i)]
\item $I_{B}\subset I_{\beta}$, $I_{C}\subset I_{\gamma}$ and $I_{B}\neq\emptyset$
\item $p>0$, $I_{D}\subset I_{\delta}$, $I_{E}\subset I_{\epsilon}$ and $I_{C}\neq\emptyset$
\end{enumerate}
$I_{\delta}\subset I_{D}$, there exists a positive integer $\eta_{1}$, such that for every sequence $\{c_{m}\} ^{\infty}_{m=1} $ with $c_m\in I_{\epsilon}$ for $m=1,2, \dots $ there exists positive integers,  $
N_{1}
$,$
N_{2} \leq \eta_{1}
$ $ $, such that $ \sum^{N_{2}}_{m=N_{1}} c_{m} \in I_{D}\cup I_{E}$, and there exists a positive integer $\eta_{2}$, such that for every sequence $\{d_{m}\} ^{\infty}_{m=1} $ with $d_m\in I_{\beta}$ for $m=1,2, \dots $ there exists positive integers,  $
N_{3}
$,$
N_{4} \leq \eta_{2}
$ $ $, such that $ \sum^{N_{4}}_{m=N_{3}} d_{m} \in I_{B}$.
Then there exists $M > 0$ and $N\in\mathbb{N}$ so that given any non-negative initial conditions, we have $x_{n},y_{n}\leq M$ for all $n>N$.
\end{thm}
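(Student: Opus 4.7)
The plan is to invoke Theorem 1 of \cite{fpinv} twice: first on the $y$-equation to bound $\{y_{n}\}$, and then on the $x$-equation, after verifying that the $x$-denominator is bounded below by a strictly positive constant plus a positive multiple of $\sum B_{j}x_{n-j}$. I would begin by using the elementary inequality $\frac{a+b}{c+d}\leq\frac{a}{c}+\frac{b}{c+d}$ (valid for $a,b,d\geq 0$ and $c>0$) with $c=q+\sum_{j}D_{j}x_{n-j}$ to split
$$y_{n}\leq\frac{p+\sum_{i=1}^{k}\delta_{i}x_{n-i}}{q+\sum_{j=1}^{k}D_{j}x_{n-j}}+\frac{\sum_{i=1}^{k}\epsilon_{i}y_{n-i}}{q+\sum_{j=1}^{k}D_{j}x_{n-j}+\sum_{j=1}^{k}E_{j}y_{n-j}}.$$
Since $q>0$ and $I_{\delta}\subset I_{D}$, the first summand is bounded above by $\frac{p}{q}+\sum_{i\in I_{\delta}}\frac{\delta_{i}}{D_{i}}$. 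In the second summand, substituting $x_{n-j}\geq y_{n-j}/M_{1}$ from the one-sided comparison produces a denominator of the form $q+\sum_{j=1}^{k}(D_{j}/M_{1}+E_{j})\,y_{n-j}$, whose set of positive coefficient indices is exactly $I_{D}\cup I_{E}$. Hence $\{y_{n}\}$ satisfies a rational difference inequality in $y$ alone matching the $\eta_{1}$-hypothesis, and Theorem 1 of \cite{fpinv} yields $M^{*}>0$ with $y_{n}\leq M^{*}$ for all sufficiently large $n$.

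Next I would mirror the technique used in the earlier $A=0$ theorems of the paper (Theorems 3 and 10) to produce a positive lower bound in the $x$-denominator. In case (i), $A=0$ with $I_{B}\subset I_{\beta}$ and $I_{C}\subset I_{\gamma}$ gives
$$x_{n}\geq\min\!\left(\min_{i\in I_{B}}\tfrac{\beta_{i}}{B_{i}},\ \min_{i\in I_{C}}\tfrac{\gamma_{i}}{C_{i}}\right)=:A_{2}>0,$$
and since $I_{B}\neq\emptyset$ this forces $\sum_{j=1}^{k}B_{j}x_{n-j}\geq A_{2}\sum_{j\in I_{B}}B_{j}>0$. In case (ii), the analogous argument applied to the $y$-equation (using $p>0$, $I_{D}\subset I_{\delta}$, $I_{E}\subset I_{\epsilon}$) yields $y_{n}\geq q_{2}>0$, and $I_{C}\neq\emptyset$ then gives $\sum_{j=1}^{k}C_{j}y_{n-j}\geq q_{2}\sum_{j\in I_{C}}C_{j}>0$. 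Splitting off half of the surviving positive term, I can write the $x$-denominator as $c_{0}+\tfrac{1}{2}\sum_{j=1}^{k}B_{j}x_{n-j}+\tfrac{1}{2}\sum_{j=1}^{k}C_{j}y_{n-j}$ for some constant $c_{0}>0$.

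Using $y_{n}\leq M^{*}$ in the $x$-numerator and the positive lower bound just derived in the $x$-denominator, I obtain a difference inequality of the form
$$x_{n}\leq\frac{\alpha+M^{*}\sum_{i=1}^{k}\gamma_{i}+\sum_{i=1}^{k}\beta_{i}x_{n-i}}{c_{0}+\tfrac{1}{2}\sum_{j=1}^{k}B_{j}x_{n-j}},$$
which involves only $\{x_{n}\}$. The $\eta_{2}$-hypothesis on sequences in $I_{\beta}$ summing to elements of $I_{B}$ is exactly the hypothesis of Theorem 1 of \cite{fpinv}, so $x_{n}\leq M$ for all sufficiently large $n$; combined with $y_{n}\leq M^{*}$ from the first step, this completes the proof.

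The one delicate point is the setup of the first step: the splitting of the $y$-fraction must be arranged so that every $x_{n-i}$-term is absorbed into the constant-bounded summand, while the residual denominator, after the comparison $y_{n}\leq M_{1}x_{n}$ is applied, has positive coefficients on exactly the index set $I_{D}\cup I_{E}$ required by the $\eta_{1}$-hypothesis. The hypotheses $I_{\delta}\subset I_{D}$ and $y_{n}\leq M_{1}x_{n}$ are the precise pair that makes this reduction go through; the rest of the argument is then a straightforward two-step application of \cite{fpinv}.
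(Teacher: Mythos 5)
Your proposal is correct and follows essentially the same route as the paper's own proof: bound $\{y_{n}\}$ first via the split $\frac{p}{q}+\sum_{i\in I_{\delta}}\frac{\delta_{i}}{D_{i}}+\frac{\sum\epsilon_{i}y_{n-i}}{q+\sum (D_{j}/M_{1})y_{n-j}+\sum E_{j}y_{n-j}}$ and Theorem 1 of \cite{fpinv}, then extract a positive lower bound on $x_{n}$ (case (i)) or $y_{n}$ (case (ii)) to feed half of the $x$-denominator into a constant, and finally iterate on $x_{n}$ using the $\eta_{2}$-hypothesis. The only cosmetic difference is your sharper form of the lower-bound constant $A_{2}$ (a ratio-by-ratio minimum rather than the paper's $\min$-over-$\max$ quotient); both follow from the same mediant inequality and nothing in the argument changes.
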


\begin{proof}
First we must prove that $\{y_{n}\}^{\infty}_{n=1}$ is bounded above. Since $q>0$ and $I_{\delta}\subset I_{D}$ we have,
$$y_{n}= \frac{p+\sum^{k}_{i=1}\delta_{i}x_{n-i} + \sum^{k}_{i=1}\epsilon_{i}y_{n-i}}{q+\sum^{k}_{j=1}D_{j}x_{n-j} + \sum^{k}_{j=1}E_{j}y_{n-j}}\leq \frac{p}{q}+\sum_{i\in I_{\delta}}\frac{\delta_{i}}{D_{i}}+ \frac{\sum^{k}_{i=1}\epsilon_{i}y_{n-i}}{q+\sum^{k}_{j=1}D_{j}x_{n-j} + \sum^{k}_{j=1}E_{j}y_{n-j}}$$
$$\leq \frac{p}{q}+\sum_{i\in I_{\delta}}\frac{\delta_{i}}{D_{i}}+ \frac{\sum^{k}_{i=1}\epsilon_{i}y_{n-i}}{q+\sum^{k}_{j=1}\frac{D_{j}y_{n-j}}{M_{1}} + \sum^{k}_{j=1}E_{j}y_{n-j}}.$$
Thus the sequence $\{y_{n}\}^{\infty}_{n=1}$ satisfies the above difference inequality. Using Theorem 1 in \cite{fpinv} we see that there exists $M_{2} > 0$ and $N\in\mathbb{N}$ so that given any non-negative initial conditions, we have $y_{n}\leq M_{2}$ for all $n>N$. Let us show that $\{x_{n}\}$ or $\{y_{n}\}$ is bounded below by a constant $A_{2}$.\newline
Suppose $I_{B}\subset I_{\beta}$, $I_{C}\subset I_{\gamma}$, and $I_{B}\neq\emptyset$ then we have
$$x_{n}=\frac{\alpha+\sum^{k}_{i=1}\beta_{i}x_{n-i} + \sum^{k}_{i=1}\gamma_{i}y_{n-i}}{\sum^{k}_{j=1}B_{j}x_{n-j} + \sum^{k}_{j=1}C_{j}y_{n-j}}\geq \frac{\sum_{i\in I_{\beta}}\beta_{i}x_{n-i} + \sum_{i\in I_{\gamma}}\gamma_{i}y_{n-i}}{\sum_{j\in I_{\beta}}B_{j}x_{n-j}+\sum_{j\in I_{\gamma}}C_{j}y_{n-j}}$$
$$\geq \frac{\min(\min_{i\in I_{\beta}}(\beta_{i}),\min_{i\in I_{\gamma}}(\gamma_{i}))}{\max(\max_{j\in I_{\beta}}(B_{j}),\max_{j\in I_{\gamma}}(C_{j}))}.$$
So in this case $\{x_{n}\}$ is bounded below by a constant.
Now suppose $p>0$, $I_{D}\subset I_{\delta}$, $I_{E}\subset I_{\epsilon}$, and $I_{C}\neq\emptyset$ then we have
$$y_{n}=\frac{p+\sum^{k}_{i=1}\delta_{i}x_{n-i} + \sum^{k}_{i=1}\epsilon_{i}y_{n-i}}{q+\sum^{k}_{j=1}D_{j}x_{n-j} + \sum^{k}_{j=1}E_{j}y_{n-j}}\geq \frac{p+\sum_{i\in I_{\delta}}\delta_{i}x_{n-i} + \sum_{i\in I_{\epsilon}}\epsilon_{i}y_{n-i}}{q+\sum_{j\in I_{\delta}}D_{j}x_{n-j}+\sum_{j\in I_{\epsilon}}E_{j}y_{n-j}}$$
$$\geq \frac{\min(p,\min_{i\in I_{\delta}}(\delta_{i}),\min_{i\in I_{\epsilon}}(\epsilon_{i}))}{\max(q,\max_{j\in I_{\delta}}(D_{j}),\max_{j\in I_{\epsilon}}(E_{j}))}.$$
So in this case $\{y_{n}\}$ is bounded below by a constant.
Now we have shown that there exists $A_{2}>0$, so that $\{x_{n}\}$ is bounded below by $A_{2}$ and $I_{B}\neq\emptyset$ or $\{y_{n}\}$ is bounded below by $A_{2}$ and $I_{C}\neq\emptyset$.
We use this fact to show the following.
$$x_n=\frac{\alpha+\sum^{k}_{i=1}\beta_{i}x_{n-i} + \sum^{k}_{i=1}\gamma_{i}y_{n-i}}{\sum^{k}_{j=1}B_{j}x_{n-j} + \sum^{k}_{j=1}C_{j}y_{n-j}} $$ $$\leq  \frac{\alpha+\sum^{k}_{i=1}\beta_{i}x_{n-i}+ \sum^{k}_{i=1}\gamma_{i}M_{2}}{\min_+(\sum^{k}_{j=1}B_{j},\sum^{k}_{j=1}C_{j})\frac{A_{2}}{2}+\frac{1}{2}\sum^{k}_{j=1}B_{j}x_{n-j} + \frac{1}{2}\sum^{k}_{j=1}C_{j}y_{n-j}}$$ $$\leq   \frac{\alpha+\sum^{k}_{i=1}\beta_{i}x_{n-i}+ \sum^{k}_{i=1}\gamma_{i}M_{2}}{\min_+(\sum^{k}_{j=1}B_{j},\sum^{k}_{j=1}C_{j})\frac{A_{2}}{2}+\frac{1}{2}\sum^{k}_{j=1}B_{j}x_{n-j}}.$$
Thus we see that the sequence $\{x_{n}\}^{\infty}_{n=1}$ satisfies the above difference inequality. Using Theorem 1 in \cite{fpinv} we see that there exists $M_{3} > 0$ and $N\in\mathbb{N}$ so that given any non-negative initial conditions, we have $x_{n}\leq M_{3}$ for all $n>N$.\newline
\end{proof}

For the next five results we assume that there exists $M_{1}>0$ and $M_{2}\geq 0$ so that $y_{n}\leq M_{1}x_{n} + M_{2}$ for all $n\in\mathbb{N}$. In the next result we use Theorem 1 of \cite{fpinv} to iterate with respect to $y_{n}$ and then apply Theorem 11.

\begin{thm}
Suppose that we have a $k^{th}$ order system of two rational
difference equations
$$x_n=\frac{\alpha+\sum^{k}_{i=1}\beta_{i}x_{n-i} + \sum^{k}_{i=1}\gamma_{i}y_{n-i}}{A+\sum^{k}_{j=1}B_{j}x_{n-j} + \sum^{k}_{j=1}C_{j}y_{n-j}},\quad n\in\mathbb{N},$$
$$y_n=\frac{p+\sum^{k}_{i=1}\delta_{i}x_{n-i} + \sum^{k}_{i=1}\epsilon_{i}y_{n-i}}{q+\sum^{k}_{j=1}D_{j}x_{n-j} + \sum^{k}_{j=1}E_{j}y_{n-j}},\quad n\in\mathbb{N},$$
with non-negative parameters and non-negative initial conditions. Assume that there exists $M_{1}>0$ and $M_{2}\geq 0$ so that $y_{n}\leq M_{1}x_{n} + M_{2}$ for all $n\in\mathbb{N}$.
Further assume that $A,q>0$ $I_{\delta}\subset I_{D}$, there exists a positive integer $\eta_1$, such that for every sequence $\{c_{m}\} ^{\infty}_{m=1} $ with $c_m\in I_{\epsilon}$ for $m=1,2, \dots $ there exists positive integers,  $
N_{1}
$,$
N_{2} \leq \eta_1
$ $ $, such that $ \sum^{N_{2}}_{m=N_{1}} c_{m} \in I_{D}\cup I_{E}$, and there exists a positive integer $\eta_2$, such that for every sequence $\{d_{m}\} ^{\infty}_{m=1} $ with $d_m\in I_{\beta}$ for $m=1,2, \dots $ there exists positive integers,  $
N_{3}
$,$
N_{4} \leq \eta_2
$ $ $, such that $ \sum^{N_{4}}_{m=N_{3}} d_{m} \in I_{B}$.
Then there exists $M > 0$ and $N\in\mathbb{N}$ so that given any non-negative initial conditions, we have $x_{n},y_{n}\leq M$ for all $n>N$.
\end{thm}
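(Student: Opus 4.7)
The plan is to mirror the two-step strategy of Theorem 14: first iterate with respect to $y_n$ via Theorem 1 of \cite{fpinv} to establish an upper bound on $\{y_n\}^\infty_{n=1}$, then invoke Theorem 11 (whose hypotheses $A>0$, $\{y_n\}$ bounded above, and the iteration condition on $I_\beta$ and $I_B$ all hold here) to bound $\{x_n\}^\infty_{n=1}$. Observe first that $y_n\leq M_1 x_n+M_2$ trivially implies $y_n\leq M_1' x_n+M_2$ for every $M_1'\geq M_1$ (since $x_n\geq 0$), so we may replace $M_1$ by a larger constant if needed; we will therefore assume throughout that $M_1$ has been chosen large enough that $qM_1 > M_2\sum^{k}_{j=1}D_j$.

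For the iteration step, I would proceed exactly as in the proof of Theorem 14: using $q>0$ and $I_\delta\subset I_D$, split the fraction for $y_n$ to get
$$y_n \leq \frac{p}{q}+\sum_{i\in I_\delta}\frac{\delta_i}{D_i} + \frac{\sum^{k}_{i=1}\epsilon_i y_{n-i}}{q+\sum^{k}_{j=1}D_j x_{n-j}+\sum^{k}_{j=1}E_j y_{n-j}}.$$
Then, to replace each $x_{n-j}$ in the denominator by a $y_{n-j}$ expression, apply $y_{n-j}\leq M_1 x_{n-j}+M_2$, i.e.\ $x_{n-j}\geq (y_{n-j}-M_2)/M_1$, and sum. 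By the normalization of $M_1$, the resulting lower bound on the denominator,
$$\Bigl[q-\frac{M_2}{M_1}\sum^{k}_{j=1}D_j\Bigr] + \sum^{k}_{j=1}\Bigl[\frac{D_j}{M_1}+E_j\Bigr]y_{n-j},$$
has strictly positive constant term and a positive coefficient on $y_{n-j}$ for every $j\in I_D\cup I_E$.

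The main obstacle I expect is precisely this sign issue: unlike in Theorem 14, where $y_n\leq M_1 x_n$ produces the clean substitution $x_{n-j}\geq y_{n-j}/M_1$, the additive $M_2$ introduces a subtractive constant $(M_2/M_1)\sum D_j$ which could a priori swamp $q$ and render the denominator non-positive. Enlarging $M_1$ at the outset is what circumvents this, since only the ratio $M_2/M_1$ controls the sign of the constant term. With the substitution in place, the resulting difference inequality
$$y_n \leq C_0 + \frac{\sum^{k}_{i=1}\epsilon_i y_{n-i}}{C_1+\sum^{k}_{j=1}F_j y_{n-j}},$$
with $C_0:=p/q+\sum_{i\in I_\delta}\delta_i/D_i$, $C_1>0$, and $F_j>0$ for all $j\in I_D\cup I_E$, is exactly of the type handled by Theorem 1 of \cite{fpinv}, given the assumed iteration condition on $I_\epsilon$ with target $I_D\cup I_E$. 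This yields $y_n\leq M'$ for all $n$ beyond some index.

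With $\{y_n\}^\infty_{n=1}$ now bounded above, the hypotheses of Theorem 11 are satisfied ($A>0$ and the iteration condition on $I_\beta$ with target $I_B$ are both in force), so Theorem 11 directly produces a constant $M>0$ and an index $N$ with $x_n\leq M$ for all $n>N$. Combined with the $y$-bound from the iteration step, this gives the desired conclusion.
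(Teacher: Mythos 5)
Your proposal is correct and follows essentially the same route as the paper: split off $\frac{p}{q}+\sum_{i\in I_{\delta}}\frac{\delta_{i}}{D_{i}}$ using $q>0$ and $I_{\delta}\subset I_{D}$, convert the remaining denominator into a function of the $y_{n-j}$ via $x_{n-j}\geq (y_{n-j}-M_{2})/M_{1}$, apply Theorem 1 of \cite{fpinv} to bound $\{y_{n}\}$, and finish by invoking Theorem 11. The only (harmless) difference is how the additive constant $M_{2}$ is absorbed: you pre-enlarge $M_{1}$ so that $q-(M_{2}/M_{1})\sum_{j}D_{j}>0$, whereas the paper keeps $\frac{q}{2}$ as the constant term and replaces $\frac{D_{j}}{M_{1}}$ by $F_{j}=\min(\frac{D_{j}}{M_{1}},\frac{q}{2k(M_{2}+1)})$; both devices produce a denominator with positive constant term and positive coefficients exactly on $I_{D}\cup I_{E}$, which is all Theorem 1 of \cite{fpinv} requires.
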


\begin{proof}
It suffices to prove that $\{y_{n}\}^{\infty}_{n=1}$ is bounded above then we simply apply Theorem 11 to obtain the result. Since $q>0$ and $I_{\delta}\subset I_{D}$ we have,
$$y_{n}= \frac{p+\sum^{k}_{i=1}\delta_{i}x_{n-i} + \sum^{k}_{i=1}\epsilon_{i}y_{n-i}}{q+\sum^{k}_{j=1}D_{j}x_{n-j} + \sum^{k}_{j=1}E_{j}y_{n-j}}\leq \frac{p}{q}+\sum_{i\in I_{\delta}}\frac{\delta_{i}}{D_{i}}+ \frac{\sum^{k}_{i=1}\epsilon_{i}y_{n-i}}{q+\sum^{k}_{j=1}D_{j}x_{n-j} + \sum^{k}_{j=1}E_{j}y_{n-j}}$$
$$\leq \frac{p}{q}+\sum_{i\in I_{\delta}}\frac{\delta_{i}}{D_{i}}+ \frac{\sum^{k}_{i=1}\epsilon_{i}y_{n-i}}{\frac{q}{2}+\sum^{k}_{j=1}F_{j}y_{n-j}+ \sum^{k}_{j=1}E_{j}y_{n-j}}.$$
Where $F_{j}=\min(\frac{D_{j}}{M_{1}},\frac{q}{2k(M_{2}+1)})$. Thus the sequence $\{y_{n}\}^{\infty}_{n=1}$ satisfies the above difference inequality. Using Theorem 1 in \cite{fpinv} we see that there exists $M > 0$ and $N\in\mathbb{N}$ so that given any non-negative initial conditions, we have $y_{n}\leq M$ for all $n>N$. Thus applying Theorem 11 we obtain the result.\newline
\end{proof}

For the following result we use Theorem 1 of \cite{fpinv} to iterate with respect to $x_{n}$ and then we use the inequality $y_{n}\leq M_{1}x_{n} + M_{2}$ for $M_{1}>0$ and $M_{2}\geq 0$.
\begin{thm}
Suppose that we have a $k^{th}$ order system of two rational
difference equations
$$x_n=\frac{\alpha+\sum^{k}_{i=1}\beta_{i}x_{n-i} + \sum^{k}_{i=1}\gamma_{i}y_{n-i}}{A+\sum^{k}_{j=1}B_{j}x_{n-j} + \sum^{k}_{j=1}C_{j}y_{n-j}},\quad n\in\mathbb{N},$$
$$y_n=\frac{p+\sum^{k}_{i=1}\delta_{i}x_{n-i} + \sum^{k}_{i=1}\epsilon_{i}y_{n-i}}{q+\sum^{k}_{j=1}D_{j}x_{n-j} + \sum^{k}_{j=1}E_{j}y_{n-j}},\quad n\in\mathbb{N},$$
with non-negative parameters and non-negative initial conditions. Assume that there exists $M_{1}>0$ and $M_{2}\geq 0$ so that $y_{n}\leq M_{1}x_{n} + M_{2}$ for all $n\in\mathbb{N}$.
Further assume that $A>0$ there exists a positive integer $\eta$, such that for every sequence $\{c_{m}\} ^{\infty}_{m=1} $ with $c_m\in I_{\beta}\cup(I_{\gamma}\setminus I_{C})$ for $m=1,2, \dots $ there exists positive integers,  $
N_{1}
$,$
N_{2} \leq \eta
$ $ $, such that $ \sum^{N_{2}}_{m=N_{1}} c_{m} \in I_{B}$.
Then there exists $M > 0$ and $N\in\mathbb{N}$ so that given any non-negative initial conditions, we have $x_{n},y_{n}\leq M$ for all $n>N$.
\end{thm}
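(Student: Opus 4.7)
The plan is to derive, by algebraic manipulation, a difference inequality involving only $\{x_{n}\}$, then apply Theorem 1 of \cite{fpinv} to obtain an upper bound on $\{x_{n}\}$, and finally transfer that bound to $\{y_{n}\}$ via the one-sided comparison $y_n\leq M_1 x_n+M_2$.

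The first step is to handle the mixed $\gamma_i y_{n-i}$ terms in the numerator of the $x$-equation. I would split $I_\gamma$ as $(I_\gamma\cap I_C)\cup(I_\gamma\setminus I_C)$. For $i\in I_\gamma\cap I_C$ the denominator already carries a $C_i y_{n-i}$ term, so the standard ratio trick used in Theorems 7 and 11 gives
$$\frac{\gamma_i y_{n-i}}{A+\sum_{j=1}^k B_j x_{n-j}+\sum_{j=1}^k C_j y_{n-j}}\leq \frac{\gamma_i}{C_i},$$
contributing only the constant $K:=\sum_{i\in I_\gamma\cap I_C}\gamma_i/C_i$. For $i\in I_\gamma\setminus I_C$ no such denominator cancellation is available, so here I would instead use the hypothesis to replace $y_{n-i}$ by $M_1 x_{n-i}+M_2$. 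Dropping the $C_j y_{n-j}$ sum from the denominator of the remaining quotient then produces
$$x_n\leq K+\frac{\bigl(\alpha+M_2\sum_{i\in I_\gamma\setminus I_C}\gamma_i\bigr)+\sum_{i=1}^k\beta_i x_{n-i}+M_1\sum_{i\in I_\gamma\setminus I_C}\gamma_i x_{n-i}}{A+\sum_{j=1}^k B_j x_{n-j}},$$
a difference inequality purely in $\{x_n\}$.

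In this inequality the positive $x$-coefficients in the numerator sit exactly on $I_\beta\cup(I_\gamma\setminus I_C)$, while in the denominator $A>0$ and the positive $x$-coefficients sit on $I_B$. Consequently the index hypothesis stated in the theorem is precisely the one required by Theorem 1 of \cite{fpinv}, which yields constants $M_3>0$ and $N\in\mathbb{N}$ with $x_n\leq M_3$ for all $n>N$. The one-sided comparison then gives $y_n\leq M_1 M_3+M_2$ for the same range of $n$, and the result follows on setting $M:=\max(M_3,M_1 M_3+M_2)$.

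The only delicate point is using the two different devices for the $\gamma_i y_{n-i}$ terms in parallel, namely a ratio bound when the denominator carries a matching $C_i y_{n-i}$ term and the linear comparison $y_{n-i}\leq M_1 x_{n-i}+M_2$ otherwise. Once the inequality above is in place, the upper bound for $\{x_n\}$ is immediate from the machinery of \cite{fpinv}, and the bound for $\{y_n\}$ is automatic from the comparison hypothesis.
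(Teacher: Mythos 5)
Your proposal is correct and follows essentially the same route as the paper's proof: split the $\gamma_i y_{n-i}$ terms according to whether $i\in I_\gamma\cap I_C$ (ratio bound $\gamma_i/C_i$) or $i\in I_\gamma\setminus I_C$ (replace $y_{n-i}$ by $M_1x_{n-i}+M_2$), reduce to a difference inequality in $\{x_n\}$ alone, apply Theorem 1 of \cite{fpinv}, and transfer the bound to $\{y_n\}$ via the comparison. The only cosmetic difference is that the paper also extracts $\alpha/A$ as a separate constant, whereas you keep $\alpha$ in the numerator; both are fine.
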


\begin{proof}
Since we have assumed $y_{n}\leq M_{1}x_{n}+M_{2}$ it suffices to find a bound for $\{x_{n}\}^{\infty}_{n=1}$. Notice that
$$x_n=\frac{\alpha+\sum^{k}_{i=1}\beta_{i}x_{n-i} + \sum^{k}_{i=1}\gamma_{i}y_{n-i}}{A+\sum^{k}_{j=1}B_{j}x_{n-j} + \sum^{k}_{j=1}C_{j}y_{n-j}}\leq \frac{\alpha}{A}+\sum_{i\in I_{C}\cap I_{\gamma}}\frac{\gamma_{i}}{C_{i}}+\frac{\sum^{k}_{i=1}\beta_{i}x_{n-i} + \sum_{i\in I_{\gamma}\setminus I_{C}}\gamma_{i}y_{n-i}}{A+\sum^{k}_{j=1}B_{j}x_{n-j}}$$
$$\leq \frac{p}{q}+\sum_{i\in I_{C}\cap I_{\gamma}}\frac{\gamma_{i}}{C_{i}}+\frac{\sum_{i\in I_{\gamma}\setminus I_{C}}\gamma_{i}M_{2} + \sum^{k}_{i=1}\beta_{i}x_{n-i} + \sum_{i\in I_{\gamma}\setminus I_{C}}\gamma_{i}M_{1}x_{n-i}}{A+\sum^{k}_{j=1}B_{j}x_{n-j}}.$$
Thus the sequence $\{x_{n}\}^{\infty}_{n=1}$ satisfies the above difference inequality. Using Theorem 1 in \cite{fpinv} we see that there exists $M > 0$ and $N\in\mathbb{N}$ so that given any non-negative initial conditions, we have $x_{n}\leq M$ for all $n>N$. Since there exists $M_{1}>0$ and $M_{2}\geq 0$ so that $y_{n}\leq M_{1}x_{n} + M_{2}$ for all $n\in\mathbb{N}$, we obtain the full result.\newline
\end{proof}

For the next result we show that $x_{n}$ or $y_{n}$ is bounded below and then we use Theorem 1 of \cite{fpinv} to iterate with respect to $x_{n}$. Once we have shown $x_{n}$ is bounded above we use the fact that $y_{n}\leq M_{1}x_{n} + M_{2}$ for $M_{1}>0$ and $M_{2}\geq 0$ to obtain the result.

\begin{thm}
Suppose that we have a $k^{th}$ order system of two rational
difference equations
$$x_n=\frac{\alpha+\sum^{k}_{i=1}\beta_{i}x_{n-i} + \sum^{k}_{i=1}\gamma_{i}y_{n-i}}{A+\sum^{k}_{j=1}B_{j}x_{n-j} + \sum^{k}_{j=1}C_{j}y_{n-j}},\quad n\in\mathbb{N},$$
$$y_n=\frac{p+\sum^{k}_{i=1}\delta_{i}x_{n-i} + \sum^{k}_{i=1}\epsilon_{i}y_{n-i}}{q+\sum^{k}_{j=1}D_{j}x_{n-j} + \sum^{k}_{j=1}E_{j}y_{n-j}},\quad n\in\mathbb{N},$$
with non-negative parameters and non-negative initial conditions. Assume that there exists $M_{1}>0$ and $M_{2}\geq 0$ so that $y_{n}\leq M_{1}x_{n} + M_{2}$ for all $n\in\mathbb{N}$.
Further assume that $A=0$, one of the following holds
\begin{enumerate}[(i)]
\item $I_{B}\subset I_{\beta}$, $I_{C}\subset I_{\gamma}$ and $I_{B}\neq\emptyset$
\item $q=0$, $I_{D}\subset I_{\delta}$, $I_{E}\subset I_{\epsilon}$ and $I_{C}\neq\emptyset$
\item $p,q>0$, $I_{D}\subset I_{\delta}$, $I_{E}\subset I_{\epsilon}$ and $I_{C}\neq\emptyset$
\end{enumerate}
and there exists a positive integer $\eta$, such that for every sequence $\{c_{m}\} ^{\infty}_{m=1} $ with $c_m\in I_{\beta}\cup(I_{\gamma}\setminus I_{C})$ for $m=1,2, \dots $ there exists positive integers,  $
N_{1}
$,$
N_{2} \leq \eta
$ $ $, such that $ \sum^{N_{2}}_{m=N_{1}} c_{m} \in I_{B}$.
Then there exists $M > 0$ and $N\in\mathbb{N}$ so that given any non-negative initial conditions, we have $x_{n},y_{n}\leq M$ for all $n>N$.
\end{thm}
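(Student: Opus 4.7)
The plan is to follow the structure of the proof of Theorem 11 (and indeed Theorem 14 is the analog of Theorem 11 in the one-sided-comparison setting where we have $y_n\leq M_1 x_n + M_2$ instead of $y_n\leq M_1 x_n$). I would proceed in three stages: first establish a positive lower bound on $\{x_n\}$ or $\{y_n\}$, then combine this with the one-sided comparison to produce a difference inequality in $x_n$ alone, and finally apply Theorem 1 of \cite{fpinv}.

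For the first stage, I would handle each of the three cases separately, exactly as in the proofs of Theorems 10 and 11. In case (i), the assumption $A=0$ together with $I_B\subset I_\beta$ and $I_C\subset I_\gamma$ lets us bound the denominator of $x_n$ by a constant times the numerator of $x_n$, yielding $x_n\geq A_2$ for some $A_2>0$. In cases (ii) and (iii), the analogous inclusions $I_D\subset I_\delta$, $I_E\subset I_\epsilon$ (together with $q=0$ or $p,q>0$ respectively) give a constant lower bound $y_n\geq A_2$. Crucially, in case (i) we also have $I_B\neq\emptyset$, and in cases (ii) and (iii) we have $I_C\neq\emptyset$, which is what will make the denominator trick below produce a genuinely positive constant.

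For the second stage, I would split the term $\sum_{i=1}^{k}\gamma_i y_{n-i}$ in the numerator of $x_n$ into two pieces: indices in $I_C\cap I_\gamma$, which can be pulled out as constants $\gamma_i/C_i$ by comparing with the matching $C_i y_{n-i}$ term in the denominator, and indices in $I_\gamma\setminus I_C$, to which we apply the one-sided comparison $y_{n-i}\leq M_1 x_{n-i}+M_2$. In the denominator, I would use the familiar halving trick
\[
\sum_{j=1}^{k} B_j x_{n-j} + \sum_{j=1}^{k} C_j y_{n-j} \;\geq\; \tfrac{A_2}{2}\min{}_{+}\!\bigl(\textstyle\sum_{j=1}^{k} B_j,\sum_{j=1}^{k} C_j\bigr) + \tfrac{1}{2}\sum_{j=1}^{k} B_j x_{n-j},
\]
where the lower bound from stage one is used on whichever of $\sum B_j x_{n-j}$ or $\sum C_j y_{n-j}$ is positive (hence the $\min_{+}$). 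The result is a difference inequality of the form
\[
x_n \leq K_1 + \frac{K_2 + \sum_{i=1}^{k}\beta_i x_{n-i} + \sum_{i\in I_\gamma\setminus I_C}\gamma_i M_1 x_{n-i}}{\tfrac{A_2}{2}\min{}_{+}\!\bigl(\sum_{j=1}^{k} B_j,\sum_{j=1}^{k} C_j\bigr) + \tfrac{1}{2}\sum_{j=1}^{k} B_j x_{n-j}},
\]
for positive constants $K_1,K_2$.

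For the third stage I apply Theorem 1 of \cite{fpinv} to this inequality; its covering hypothesis matches exactly the assumption in the theorem that every sequence $\{c_m\}$ with $c_m\in I_\beta\cup(I_\gamma\setminus I_C)$ has a block sum lying in $I_B$. This yields $x_n\leq M'$ eventually, and then $y_n\leq M_1 x_n+M_2\leq M_1 M'+M_2$ delivers the full conclusion. The only real obstacle is the bookkeeping for the three cases in stage one, and in particular making sure that the choice between the $\sum B_j x_{n-j}$ and $\sum C_j y_{n-j}$ routes through the $\min_{+}$ denominator term is correctly matched with whichever of $\{x_n\}$ or $\{y_n\}$ is bounded below; but this is handled cleanly by the definition of $\min_{+}$ together with the case-wise guarantees $I_B\neq\emptyset$ or $I_C\neq\emptyset$.
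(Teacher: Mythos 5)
Your proposal is correct and follows essentially the same route as the paper's own proof: establish the lower bound $A_2$ case by case, split $\sum_i\gamma_i y_{n-i}$ into the $I_\gamma\cap I_C$ part (absorbed as $\sum\gamma_i/C_i$) and the $I_\gamma\setminus I_C$ part (replaced via $y_{n-i}\leq M_1x_{n-i}+M_2$), apply the halving/$\min_+$ trick in the denominator, and invoke Theorem 1 of \cite{fpinv} with the covering condition on $I_\beta\cup(I_\gamma\setminus I_C)$. The resulting difference inequality is exactly the one the paper derives, so no further comment is needed.
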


\begin{proof}
Suppose $I_{B}\subset I_{\beta}$, $I_{C}\subset I_{\gamma}$, and $I_{B}\neq\emptyset$ then we have
$$x_{n}=\frac{\alpha+\sum^{k}_{i=1}\beta_{i}x_{n-i} + \sum^{k}_{i=1}\gamma_{i}y_{n-i}}{\sum^{k}_{j=1}B_{j}x_{n-j} + \sum^{k}_{j=1}C_{j}y_{n-j}}\geq \frac{\sum_{i\in I_{\beta}}\beta_{i}x_{n-i} + \sum_{i\in I_{\gamma}}\gamma_{i}y_{n-i}}{\sum_{j\in I_{\beta}}B_{j}x_{n-j}+\sum_{j\in I_{\gamma}}C_{j}y_{n-j}}$$
$$\geq \frac{\min(\min_{i\in I_{\beta}}(\beta_{i}),\min_{i\in I_{\gamma}}(\gamma_{i}))}{\max(\max_{j\in I_{\beta}}(B_{j}),\max_{j\in I_{\gamma}}(C_{j}))}.$$
So in this case $\{x_{n}\}$ is bounded below by a constant.
Now suppose $q=0$, $I_{D}\subset I_{\delta}$, $I_{E}\subset I_{\epsilon}$, and $I_{C}\neq\emptyset$ then we have
$$y_{n}=\frac{p+\sum^{k}_{i=1}\delta_{i}x_{n-i} + \sum^{k}_{i=1}\epsilon_{i}y_{n-i}}{\sum^{k}_{j=1}D_{j}x_{n-j} + \sum^{k}_{j=1}E_{j}y_{n-j}}\geq \frac{\sum_{i\in I_{\delta}}\delta_{i}x_{n-i} + \sum_{i\in I_{\epsilon}}\epsilon_{i}y_{n-i}}{\sum_{j\in I_{\delta}}D_{j}x_{n-j}+\sum_{j\in I_{\epsilon}}E_{j}y_{n-j}}$$
$$\geq \frac{\min(\min_{i\in I_{\delta}}(\delta_{i}),\min_{i\in I_{\epsilon}}(\epsilon_{i}))}{\max(\max_{j\in I_{\delta}}(D_{j}),\max_{j\in I_{\epsilon}}(E_{j}))}.$$
So in this case $\{y_{n}\}$ is bounded below by a constant.
Now suppose $p,q>0$, $I_{D}\subset I_{\delta}$, $I_{E}\subset I_{\epsilon}$, and $I_{C}\neq\emptyset$ then we have
$$y_{n}=\frac{p+\sum^{k}_{i=1}\delta_{i}x_{n-i} + \sum^{k}_{i=1}\epsilon_{i}y_{n-i}}{q+\sum^{k}_{j=1}D_{j}x_{n-j} + \sum^{k}_{j=1}E_{j}y_{n-j}}\geq \frac{p+\sum_{i\in I_{\delta}}\delta_{i}x_{n-i} + \sum_{i\in I_{\epsilon}}\epsilon_{i}y_{n-i}}{q+\sum_{j\in I_{\delta}}D_{j}x_{n-j}+\sum_{j\in I_{\epsilon}}E_{j}y_{n-j}}$$
$$\geq \frac{\min(p,\min_{i\in I_{\delta}}(\delta_{i}),\min_{i\in I_{\epsilon}}(\epsilon_{i}))}{\max(q,\max_{j\in I_{\delta}}(D_{j}),\max_{j\in I_{\epsilon}}(E_{j}))}.$$
So in this case $\{y_{n}\}$ is bounded below by a constant.
Now we have shown that there exists $A_{2}>0$, so that $\{x_{n}\}$ is bounded below by $A_{2}$ and $I_{B}\neq\emptyset$ or $\{y_{n}\}$ is bounded below by $A_{2}$ and $I_{C}\neq\emptyset$.
We use this fact to show the following.
$$x_n=\frac{\alpha+\sum^{k}_{i=1}\beta_{i}x_{n-i} + \sum^{k}_{i=1}\gamma_{i}y_{n-i}}{\sum^{k}_{j=1}B_{j}x_{n-j} + \sum^{k}_{j=1}C_{j}y_{n-j}} \leq \frac{\sum_{i\in I_{\gamma}\cap I_{C}}\gamma_{i}y_{n-i}}{\sum^{k}_{j=1}B_{j}x_{n-j} + \sum^{k}_{j=1}C_{j}y_{n-j}}$$ $$ + \frac{\alpha+\sum^{k}_{i=1}\beta_{i}x_{n-i}+ \sum_{i\in I_{\gamma}\setminus I_{C}}\gamma_{i}y_{n-i}}{\min_+(\sum^{k}_{j=1}B_{j},\sum^{k}_{j=1}C_{j})\frac{A_{2}}{2}+\frac{1}{2}\sum^{k}_{j=1}B_{j}x_{n-j} + \frac{1}{2}\sum^{k}_{j=1}C_{j}y_{n-j}}$$ $$\leq \sum_{i\in I_{\gamma}\cap I_{C}}\frac{\gamma_{i}}{C_{i}} + \frac{\alpha+\sum_{i\in I_{\gamma}\setminus I_{C}}\gamma_{i}M_{2}+\sum^{k}_{i=1}\beta_{i}x_{n-i}+\sum_{i\in I_{\gamma}\setminus I_{C}}\gamma_{i}M_{1}x_{n-i}}{\min_+(\sum^{k}_{j=1}B_{j},\sum^{k}_{j=1}C_{j})\frac{A_{2}}{2}+\frac{1}{2}\sum^{k}_{j=1}B_{j}x_{n-j}}.$$
Thus we see that the sequence $\{x_{n}\}^{\infty}_{n=1}$ satisfies the above difference inequality. Using Theorem 1 in \cite{fpinv} we see that there exists $M > 0$ and $N\in\mathbb{N}$ so that given any non-negative initial conditions, we have $x_{n}\leq M$ for all $n>N$. Since there exists $M_{1}>0$ and $M_{2}\geq 0$ so that $y_{n}\leq M_{1}x_{n} + M_{2}$ for all $n\in\mathbb{N}$, we have the result.\newline
\end{proof}

In the following theorem, we first use Theorem 1 from \cite{fpinv} to iterate with respect to $y_{n}$. We then use the fact that $y_{n}$ is bounded above along with our assumptions to show that $x_{n}$ or $y_{n}$
is bounded below. We then use these facts along with Theorem 1 from \cite{fpinv} to iterate with respect to $x_{n}$.

\begin{thm}
Suppose that we have a $k^{th}$ order system of two rational
difference equations
$$x_n=\frac{\alpha+\sum^{k}_{i=1}\beta_{i}x_{n-i} + \sum^{k}_{i=1}\gamma_{i}y_{n-i}}{A+\sum^{k}_{j=1}B_{j}x_{n-j} + \sum^{k}_{j=1}C_{j}y_{n-j}},\quad n\in\mathbb{N},$$
$$y_n=\frac{p+\sum^{k}_{i=1}\delta_{i}x_{n-i} + \sum^{k}_{i=1}\epsilon_{i}y_{n-i}}{q+\sum^{k}_{j=1}D_{j}x_{n-j} + \sum^{k}_{j=1}E_{j}y_{n-j}},\quad n\in\mathbb{N},$$
with non-negative parameters and non-negative initial conditions. Assume that there exists $M_{1}>0$ and $M_{2}\geq 0$ so that $y_{n}\leq M_{1}x_{n} + M_{2}$ for all $n\in\mathbb{N}$.
Further assume that $A=0$,$q>0$, one of the following holds
\begin{enumerate}[(i)]
\item $I_{B}\subset I_{\beta}$, $I_{C}\subset I_{\gamma}$ and $I_{B}\neq\emptyset$
\item $p>0$, $I_{D}\subset I_{\delta}$, $I_{E}\subset I_{\epsilon}$ and $I_{C}\neq\emptyset$
\end{enumerate}
$I_{\delta}\subset I_{D}$, there exists a positive integer $\eta_1$, such that for every sequence $\{c_{m}\} ^{\infty}_{m=1} $ with $c_m\in I_{\epsilon}$ for $m=1,2, \dots $ there exists positive integers,  $
N_{1}
$,$
N_{2} \leq \eta_1
$ $ $, such that $ \sum^{N_{2}}_{m=N_{1}} c_{m} \in I_{D}\cup I_{E}$, and there exists a positive integer $\eta_2$, such that for every sequence $\{d_{m}\} ^{\infty}_{m=1} $ with $d_m\in I_{\beta}$ for $m=1,2, \dots $ there exists positive integers,  $
N_{3}
$,$
N_{4} \leq \eta_2
$ $ $, such that $ \sum^{N_{4}}_{m=N_{3}} d_{m} \in I_{B}$.
Then there exists $M > 0$ and $N\in\mathbb{N}$ so that given any non-negative initial conditions, we have $x_{n},y_{n}\leq M$ for all $n>N$.
\end{thm}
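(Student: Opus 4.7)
The plan is to follow the three-stage recipe described in the prose preceding the theorem: first bound $\{y_n\}$ above, then produce a positive lower bound for one of $\{x_n\}$ or $\{y_n\}$, and finally bound $\{x_n\}$ above. The comparison $y_n \le M_1 x_n + M_2$ then transfers the $x$-bound back to $y$. Each of the three stages follows a template already established in the paper — Stage A reproduces the opening of Theorem~14's proof, Stage B reproduces the case analysis of Theorem~11 (equivalently the opening of Theorem~15), and Stage C reproduces the closing iteration in Theorem~15 — but the three must be combined with consistent constants.

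For Stage A, I would split
\[
y_n \le \frac{p}{q} + \sum_{i\in I_{\delta}}\frac{\delta_i}{D_i} + \frac{\sum_{i=1}^{k} \epsilon_i y_{n-i}}{q+\sum_{j=1}^{k} D_j x_{n-j}+\sum_{j=1}^{k} E_j y_{n-j}},
\]
using $q>0$ and $I_{\delta}\subset I_{D}$, and then use $y_{n-j}\le M_1 x_{n-j}+M_2$ to replace each $D_j x_{n-j}$ in the denominator by $F_j y_{n-j}$ modulo a term absorbable into $q/2$, with $F_j=\min\bigl(D_j/M_1,\ q/(2k(M_2+1))\bigr)$, exactly as in Theorem~14. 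Applied iteratively, Theorem~1 of \cite{fpinv} then produces $y_n\le M_3$ for all sufficiently large $n$, since the $\eta_{1}$-hypothesis on $I_{\epsilon}$ vs.\ $I_{D}\cup I_{E}$ is precisely what that theorem requires.

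For Stage B the three sub-cases of Theorem~11's proof go through verbatim here as the two sub-cases (i) and (ii): under (i) one obtains a positive lower bound on $\{x_n\}$ from $I_{B}\subset I_{\beta}$ and $I_{C}\subset I_{\gamma}$, while under (ii) one obtains a positive lower bound on $\{y_n\}$ from $p>0$, $I_{D}\subset I_{\delta}$ and $I_{E}\subset I_{\epsilon}$. In either case we obtain a constant $A_2>0$ together with $I_{B}\neq\emptyset$ or $I_{C}\neq\emptyset$ respectively. For Stage C I would then reproduce the final iteration from the proof of Theorem~15, splitting the numerator of $x_n$ along $I_{\gamma}\cap I_{C}$ (handled by the telescoping estimate $\sum_{i\in I_{\gamma}\cap I_{C}}\gamma_i/C_i$) and $I_{\gamma}\setminus I_{C}$ (handled via $y_{n-i}\le M_3$ from Stage~A), arriving at
\[
x_n \le \sum_{i\in I_{\gamma}\cap I_{C}}\frac{\gamma_i}{C_i} + \frac{\alpha + M_3\sum_{i\in I_{\gamma}\setminus I_{C}}\gamma_i + \sum_{i=1}^{k}\beta_i x_{n-i}}{\min_{+}\bigl(\sum_{j=1}^{k} B_j,\ \sum_{j=1}^{k} C_j\bigr)\tfrac{A_2}{2}+\tfrac{1}{2}\sum_{j=1}^{k}B_j x_{n-j}}.
\]
The $\eta_{2}$-hypothesis on $I_{\beta}$ vs.\ $I_{B}$ is exactly what Theorem~1 of \cite{fpinv} needs for this autonomous inequality in $x_n$, yielding $x_n\le M_4$, and then $y_n\le M_1 M_4 + M_2$ completes the proof.

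The main obstacle is not conceptual — each stage mirrors a previous argument — but bookkeeping: the additive offset $M_2$ in $y_n\le M_1 x_n + M_2$ must be handled correctly in two different places, first in Stage~A where it is absorbed into $q/2$ via the specific choice of $F_j$, and then implicitly in Stage~C where $M_3$ plays an analogous role in bounding the $I_{\gamma}\setminus I_{C}$ contributions. One must also verify that the hypotheses (i) and (ii) are each compatible with the $\min_{+}$ expression appearing in Stage~C, but this is immediate since (i) forces $I_{B}\neq\emptyset$ and (ii) forces $I_{C}\neq\emptyset$.
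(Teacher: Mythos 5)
Your proposal is correct and takes essentially the same route as the paper: the same three stages in the same order (bound $\{y_{n}\}$ above via the $F_{j}=\min(D_{j}/M_{1},\,q/(2k(M_{2}+1)))$ substitution, obtain the lower bound $A_{2}$ from cases (i)/(ii), then iterate on $x_{n}$ with Theorem 1 of \cite{fpinv} and transfer the bound back to $y_{n}$). The only cosmetic difference is in the final display, where the paper simply bounds every $\gamma_{i}y_{n-i}$ by $\gamma_{i}M_{3}$ rather than splitting $I_{\gamma}$ along $I_{C}$; both versions yield an admissible difference inequality.
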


\begin{proof}
First we must prove that $\{y_{n}\}^{\infty}_{n=1}$ is bounded above. Since $q>0$ and $I_{\delta}\subset I_{D}$ we have,
$$y_{n}= \frac{p+\sum^{k}_{i=1}\delta_{i}x_{n-i} + \sum^{k}_{i=1}\epsilon_{i}y_{n-i}}{q+\sum^{k}_{j=1}D_{j}x_{n-j} + \sum^{k}_{j=1}E_{j}y_{n-j}}\leq \frac{p}{q}+\sum_{i\in I_{\delta}}\frac{\delta_{i}}{D_{i}}+ \frac{\sum^{k}_{i=1}\epsilon_{i}y_{n-i}}{q+\sum^{k}_{j=1}D_{j}x_{n-j} + \sum^{k}_{j=1}E_{j}y_{n-j}}$$
$$\leq \frac{p}{q}+\sum_{i\in I_{\delta}}\frac{\delta_{i}}{D_{i}}+ \frac{\sum^{k}_{i=1}\epsilon_{i}y_{n-i}}{\frac{q}{2}+\sum^{k}_{j=1}F_{j}y_{n-j} + \sum^{k}_{j=1}E_{j}y_{n-j}},$$
where $F_{j}=\min(\frac{D_{j}}{M_{1}},\frac{q}{2k(M_{2}+1)})$. Thus the sequence $\{y_{n}\}^{\infty}_{n=1}$ satisfies the above difference inequality. Using Theorem 1 in \cite{fpinv} we see that there exists $M_{3} > 0$ and $N\in\mathbb{N}$ so that given any non-negative initial conditions, we have $y_{n}\leq M_{3}$ for all $n>N$. Let us show that $\{x_{n}\}$ or $\{y_{n}\}$ is bounded below by a constant $A_{2}$.\newline
Suppose $I_{B}\subset I_{\beta}$, $I_{C}\subset I_{\gamma}$, and $I_{B}\neq\emptyset$ then we have
$$x_{n}=\frac{\alpha+\sum^{k}_{i=1}\beta_{i}x_{n-i} + \sum^{k}_{i=1}\gamma_{i}y_{n-i}}{\sum^{k}_{j=1}B_{j}x_{n-j} + \sum^{k}_{j=1}C_{j}y_{n-j}}\geq \frac{\sum_{i\in I_{\beta}}\beta_{i}x_{n-i} + \sum_{i\in I_{\gamma}}\gamma_{i}y_{n-i}}{\sum_{j\in I_{\beta}}B_{j}x_{n-j}+\sum_{j\in I_{\gamma}}C_{j}y_{n-j}}$$
$$\geq \frac{\min(\min_{i\in I_{\beta}}(\beta_{i}),\min_{i\in I_{\gamma}}(\gamma_{i}))}{\max(\max_{j\in I_{\beta}}(B_{j}),\max_{j\in I_{\gamma}}(C_{j}))}.$$
So in this case $\{x_{n}\}$ is bounded below by a constant.
Now suppose $p>0$, $I_{D}\subset I_{\delta}$, $I_{E}\subset I_{\epsilon}$, and $I_{C}\neq\emptyset$ then we have
$$y_{n}=\frac{p+\sum^{k}_{i=1}\delta_{i}x_{n-i} + \sum^{k}_{i=1}\epsilon_{i}y_{n-i}}{q+\sum^{k}_{j=1}D_{j}x_{n-j} + \sum^{k}_{j=1}E_{j}y_{n-j}}\geq \frac{p+\sum_{i\in I_{\delta}}\delta_{i}x_{n-i} + \sum_{i\in I_{\epsilon}}\epsilon_{i}y_{n-i}}{q+\sum_{j\in I_{\delta}}D_{j}x_{n-j}+\sum_{j\in I_{\epsilon}}E_{j}y_{n-j}}$$
$$\geq \frac{\min(p,\min_{i\in I_{\delta}}(\delta_{i}),\min_{i\in I_{\epsilon}}(\epsilon_{i}))}{\max(q,\max_{j\in I_{\delta}}(D_{j}),\max_{j\in I_{\epsilon}}(E_{j}))}.$$
So in this case $\{y_{n}\}$ is bounded below by a constant.
Now we have shown that there exists $A_{2}>0$, so that $\{x_{n}\}$ is bounded below by $A_{2}$ and $I_{B}\neq\emptyset$ or $\{y_{n}\}$ is bounded below by $A_{2}$ and $I_{C}\neq\emptyset$.
We use this fact to show the following.
$$x_n=\frac{\alpha+\sum^{k}_{i=1}\beta_{i}x_{n-i} + \sum^{k}_{i=1}\gamma_{i}y_{n-i}}{\sum^{k}_{j=1}B_{j}x_{n-j} + \sum^{k}_{j=1}C_{j}y_{n-j}} $$ $$\leq  \frac{\alpha+\sum^{k}_{i=1}\beta_{i}x_{n-i}+ \sum^{k}_{i=1}\gamma_{i}M_{3}}{\min_+(\sum^{k}_{j=1}B_{j},\sum^{k}_{j=1}C_{j})\frac{A_{2}}{2}+\frac{1}{2}\sum^{k}_{j=1}B_{j}x_{n-j} + \frac{1}{2}\sum^{k}_{j=1}C_{j}y_{n-j}}$$ $$\leq   \frac{\alpha+\sum^{k}_{i=1}\beta_{i}x_{n-i}+ \sum^{k}_{i=1}\gamma_{i}M_{3}}{\min_+(\sum^{k}_{j=1}B_{j},\sum^{k}_{j=1}C_{j})\frac{A_{2}}{2}+\frac{1}{2}\sum^{k}_{j=1}B_{j}x_{n-j}}.$$
Thus we see that the sequence $\{x_{n}\}^{\infty}_{n=1}$ satisfies the above difference inequality. Using Theorem 1 in \cite{fpinv} we see that there exists $M_{4} > 0$ and $N\in\mathbb{N}$ so that given any non-negative initial conditions, we have $x_{n}\leq M_{4}$ for all $n>N$.\newline
\end{proof}

In the following theorem, we first show that $x_{n}$ or $y_{n}$
is bounded below. We then use Theorem 1 from \cite{fpinv} to iterate with respect to $y_{n}$. We then use the fact that $y_{n}$ is bounded above and we use the fact that $x_{n}$ or $y_{n}$
is bounded below along with Theorem 1 from \cite{fpinv} to iterate with respect to $x_{n}$.

\begin{thm}
Suppose that we have a $k^{th}$ order system of two rational
difference equations
$$x_n=\frac{\alpha+\sum^{k}_{i=1}\beta_{i}x_{n-i} + \sum^{k}_{i=1}\gamma_{i}y_{n-i}}{A+\sum^{k}_{j=1}B_{j}x_{n-j} + \sum^{k}_{j=1}C_{j}y_{n-j}},\quad n\in\mathbb{N},$$
$$y_n=\frac{p+\sum^{k}_{i=1}\delta_{i}x_{n-i} + \sum^{k}_{i=1}\epsilon_{i}y_{n-i}}{q+\sum^{k}_{j=1}D_{j}x_{n-j} + \sum^{k}_{j=1}E_{j}y_{n-j}},\quad n\in\mathbb{N},$$
with non-negative parameters and non-negative initial conditions. Assume that there exists $M_{1}>0$ and $M_{2}\geq 0$ so that $y_{n}\leq M_{1}x_{n} + M_{2}$ for all $n\in\mathbb{N}$.
Further assume that $A=0$,$q=0$, one of the following holds
\begin{enumerate}[(i)]
\item $I_{B}\subset I_{\beta}$, $I_{C}\subset I_{\gamma}$ and $I_{B},I_{D}\neq\emptyset$
\item $I_{D}\subset I_{\delta}$, $I_{E}\subset I_{\epsilon}$ and $I_{C},I_{E}\neq\emptyset$
\end{enumerate}
$I_{\delta}\subset I_{D}$, there exists a positive integer $\eta_1$, such that for every sequence $\{c_{m}\} ^{\infty}_{m=1} $ with $c_m\in I_{\epsilon}$ for $m=1,2, \dots $ there exists positive integers,  $
N_{1}
$,$
N_{2} \leq \eta_1
$ $ $, such that $ \sum^{N_{2}}_{m=N_{1}} c_{m} \in I_{D}\cup I_{E}$, and there exists a positive integer $\eta_2$, such that for every sequence $\{d_{m}\} ^{\infty}_{m=1} $ with $d_m\in I_{\beta}$ for $m=1,2, \dots $ there exists positive integers,  $
N_{3}
$,$
N_{4} \leq \eta_2
$ $ $, such that $ \sum^{N_{4}}_{m=N_{3}} d_{m} \in I_{B}$.
Then there exists $M > 0$ and $N\in\mathbb{N}$ so that given any non-negative initial conditions, we have $x_{n},y_{n}\leq M$ for all $n>N$.
\end{thm}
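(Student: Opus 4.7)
The plan is to mimic the three-phase pattern used in Theorem 15, replacing the role of the hypothesis $q>0$ (which there supplied a constant in the denominator of the $y_n$ recurrence) by the positive lower bound produced by one of the cases (i), (ii). Concretely I would: (a) show that $\{x_n\}$ or $\{y_n\}$ is bounded below by a constant $A_2>0$; (b) iterate $y_n$ via Theorem 1 of \cite{fpinv} to obtain $y_n\leq M_3$ eventually; (c) iterate $x_n$ via Theorem 1 of \cite{fpinv} to obtain $x_n\leq M_4$ eventually. The one-sided comparison $y_n\leq M_1 x_n + M_2$ then yields the full result.

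For step (a), the two sub-cases reproduce essentially word-for-word the lower-bound computations in the earlier theorems: under case (i), the $A=0$ recurrence for $x_n$ with $I_B\subset I_\beta$ and $I_C\subset I_\gamma$ yields $x_n\geq\mathrm{const}>0$; under case (ii), the $q=0$ recurrence for $y_n$ with $I_D\subset I_\delta$ and $I_E\subset I_\epsilon$ yields $y_n\geq\mathrm{const}>0$. The auxiliary hypotheses $I_D\neq\emptyset$ in case (i) and $I_E\neq\emptyset$ in case (ii) guarantee that in either scenario the denominator of the $y_n$ recurrence contains a positive-coefficient term on the bounded-below variable, which is essential for step (b).

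For step (b), the key difficulty is that $q=0$, so there is no free constant in the denominator. Instead I split $\sum_j D_j x_{n-j}+\sum_j E_j y_{n-j}$ into two halves: one half produces a positive constant $c>0$ (in case (ii) immediately as $\tfrac12 E_{j_0}A_2$ for any $j_0\in I_E$; in case (i) via $\tfrac12 D_{j_0}A_2$ for any $j_0\in I_D$, using $x_n\geq A_2$), while the other half is converted into a $\sum_j F_j y_{n-j}$ term using $y_n\leq M_1 x_n + M_2$, taking $F_j=\min(D_j/M_1,\,c/(2k(M_2+1)))$ exactly as in Theorem 15. The numerator contribution $\sum_i\delta_i x_{n-i}$ is absorbed through $I_\delta\subset I_D$ by the bound $\delta_i x_{n-i}\leq(\delta_i/D_i)\sum_j D_j x_{n-j}$, producing additive constants $\delta_i/D_i$. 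The resulting inequality for $y_n$ has the shape required by Theorem 1 of \cite{fpinv}, with the $\eta_1$-iteration hypothesis on $I_\epsilon$ and $I_D\cup I_E$ providing the combinatorial input, giving $y_n\leq M_3$ eventually.

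For step (c), with $y_n\leq M_3$ in hand I would replace $\sum_i\gamma_i y_{n-i}\leq M_3\sum_i\gamma_i$ in the numerator of the $x_n$ recurrence and split its denominator analogously (as in the proofs of Theorems 12 and 14) to obtain
\[
x_n\leq\frac{\alpha+M_3\sum_{i=1}^k\gamma_i+\sum_{i=1}^k\beta_i x_{n-i}}{\tfrac{A_2}{2}\min_{+}\!\bigl(\sum_{j=1}^k B_j,\sum_{j=1}^k C_j\bigr)+\tfrac12\sum_{j=1}^k B_j x_{n-j}},
\]
to which Theorem 1 of \cite{fpinv} applies via the $\eta_2$-iteration hypothesis on $I_\beta$ and $I_B$, producing $x_n\leq M_4$. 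The main obstacle is step (b) under case (i): only $\{x_n\}$ is bounded below while the iteration is performed on $y_n$, so the passage from $D_j x_{n-j}$ to $F_j y_{n-j}$ in the denominator requires the delicate $F_j$-splitting of Theorem 15 rather than the direct estimate available in case (ii). Once this step is arranged, the remainder is routine bookkeeping.
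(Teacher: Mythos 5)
Your proposal follows the paper's proof essentially step for step: the same two-case lower bound $A_2$, the same denominator splitting with $F_j=\min(D_j/M_1,\,c/(2k(M_2+1)))$ to iterate $y_n$ despite $q=0$, the same absorption of $\sum_i \delta_i x_{n-i}$ via $I_\delta\subset I_D$, and the same final iteration on $x_n$ using $y_n\leq M_3$ and the $A_2/2$ splitting. The only cosmetic difference is that the paper produces the denominator constant uniformly as $\min_+(\sum_j D_j,\sum_j E_j)A_2$ rather than case by case; your argument is correct as written.
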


\begin{proof}
Suppose $I_{B}\subset I_{\beta}$, $I_{C}\subset I_{\gamma}$ and $I_{B},I_{D}\neq\emptyset$ then we have
$$x_{n}=\frac{\alpha+\sum^{k}_{i=1}\beta_{i}x_{n-i} + \sum^{k}_{i=1}\gamma_{i}y_{n-i}}{\sum^{k}_{j=1}B_{j}x_{n-j} + \sum^{k}_{j=1}C_{j}y_{n-j}}\geq \frac{\sum_{i\in I_{\beta}}\beta_{i}x_{n-i} + \sum_{i\in I_{\gamma}}\gamma_{i}y_{n-i}}{\sum_{j\in I_{\beta}}B_{j}x_{n-j}+\sum_{j\in I_{\gamma}}C_{j}y_{n-j}}$$
$$\geq \frac{\min(\min_{i\in I_{\beta}}(\beta_{i}),\min_{i\in I_{\gamma}}(\gamma_{i}))}{\max(\max_{j\in I_{\beta}}(B_{j}),\max_{j\in I_{\gamma}}(C_{j}))}.$$
So in this case $\{x_{n}\}$ is bounded below by a constant.
Now suppose $I_{D}\subset I_{\delta}$, $I_{E}\subset I_{\epsilon}$ and $I_{C},I_{E}\neq\emptyset$ then we have
$$y_{n}=\frac{p+\sum^{k}_{i=1}\delta_{i}x_{n-i} + \sum^{k}_{i=1}\epsilon_{i}y_{n-i}}{\sum^{k}_{j=1}D_{j}x_{n-j} + \sum^{k}_{j=1}E_{j}y_{n-j}}\geq \frac{\sum_{i\in I_{\delta}}\delta_{i}x_{n-i} + \sum_{i\in I_{\epsilon}}\epsilon_{i}y_{n-i}}{\sum_{j\in I_{\delta}}D_{j}x_{n-j}+\sum_{j\in I_{\epsilon}}E_{j}y_{n-j}}$$
$$\geq \frac{\min(\min_{i\in I_{\delta}}(\delta_{i}),\min_{i\in I_{\epsilon}}(\epsilon_{i}))}{\max(\max_{j\in I_{\delta}}(D_{j}),\max_{j\in I_{\epsilon}}(E_{j}))}.$$
So in this case $\{y_{n}\}$ is bounded below by a constant.
Now we have shown that there exists $A_{2}>0$, so that $\{x_{n}\}$ is bounded below by $A_{2}$ and $I_{D}\neq\emptyset$ or $\{y_{n}\}$ is bounded below by $A_{2}$ and $I_{E}\neq\emptyset$.
We use this fact to show that $\{y_{n}\}$ is bounded above by a constant $M_{3}$.
$$y_{n}= \frac{p+\sum^{k}_{i=1}\delta_{i}x_{n-i} + \sum^{k}_{i=1}\epsilon_{i}y_{n-i}}{\sum^{k}_{j=1}D_{j}x_{n-j} + \sum^{k}_{j=1}E_{j}y_{n-j}}$$ $$\leq \frac{p}{\min_+(\sum^{k}_{j=1}D_{j},\sum^{k}_{j=1}E_{j})A_{2}}+\sum_{i\in I_{\delta}}\frac{\delta_{i}}{D_{i}}$$ $$+ 2\frac{\sum^{k}_{i=1}\epsilon_{i}y_{n-i}}{\min_+(\sum^{k}_{j=1}D_{j},\sum^{k}_{j=1}E_{j})A_{2}+\sum^{k}_{j=1}D_{j}x_{n-j} + \sum^{k}_{j=1}E_{j}y_{n-j}}$$
$$\leq \frac{p}{\min_+(\sum^{k}_{j=1}D_{j},\sum^{k}_{j=1}E_{j})A_{2}}+\sum_{i\in I_{\delta}}\frac{\delta_{i}}{D_{i}}$$ $$+ 2\frac{\sum^{k}_{i=1}\epsilon_{i}y_{n-i}}{\min_+(\sum^{k}_{j=1}D_{j},\sum^{k}_{j=1}E_{j})\frac{A_{2}}{2}+\sum^{k}_{j=1}F_{j}y_{n-j} + \sum^{k}_{j=1}E_{j}y_{n-j}},$$
where $F_{j}=\min(\frac{D_{j}}{M_{1}},\frac{\min_+(\sum^{k}_{j=1}D_{j},\sum^{k}_{j=1}E_{j})A_{2}}{2k(M_{2}+1)})$. Thus the sequence $\{y_{n}\}^{\infty}_{n=1}$ satisfies the above difference inequality. Using Theorem 1 in \cite{fpinv} we see that there exists $M_{3} > 0$ and $N\in\mathbb{N}$ so that given any non-negative initial conditions, we have $y_{n}\leq M_{3}$ for all $n>N$.\newline
Now we have shown that there exists $A_{2}>0$, so that $\{x_{n}\}$ is bounded below by $A_{2}$ and $I_{B}\neq\emptyset$ or $\{y_{n}\}$ is bounded below by $A_{2}$ and $I_{C}\neq\emptyset$.
Finally we use this fact to show that $\{x_{n}\}$ is bounded above by a constant $M_{4}$.
$$x_n=\frac{\alpha+\sum^{k}_{i=1}\beta_{i}x_{n-i} + \sum^{k}_{i=1}\gamma_{i}y_{n-i}}{\sum^{k}_{j=1}B_{j}x_{n-j} + \sum^{k}_{j=1}C_{j}y_{n-j}} $$ $$\leq  \frac{\alpha+\sum^{k}_{i=1}\beta_{i}x_{n-i}+ \sum^{k}_{i=1}\gamma_{i}M_{3}}{\min_+(\sum^{k}_{j=1}B_{j},\sum^{k}_{j=1}C_{j})\frac{A_{2}}{2}+\frac{1}{2}\sum^{k}_{j=1}B_{j}x_{n-j} + \frac{1}{2}\sum^{k}_{j=1}C_{j}y_{n-j}}$$ $$\leq   \frac{\alpha+\sum^{k}_{i=1}\beta_{i}x_{n-i}+ \sum^{k}_{i=1}\gamma_{i}M_{3}}{\min_+(\sum^{k}_{j=1}B_{j},\sum^{k}_{j=1}C_{j})\frac{A_{2}}{2}+\frac{1}{2}\sum^{k}_{j=1}B_{j}x_{n-j}}.$$
Thus we see that the sequence $\{x_{n}\}^{\infty}_{n=1}$ satisfies the above difference inequality. Using Theorem 1 in \cite{fpinv} we see that there exists $M_{4} > 0$ and $N\in\mathbb{N}$ so that given any non-negative initial conditions, we have $x_{n}\leq M_{4}$ for all $n>N$.\newline
\end{proof}

\section{A Comparison on Both Sides Involving Constants}

For the results in this section, we assume that there exists $M_{1},M_{3}>0$ and $M_{4}\geq M_{2}\geq 0$ so that $x_{n}\leq M_{1}y_{n}+M_{2}\leq M_{3}x_{n}+M_{4}$ for all $n\in\mathbb{N}$. We use this comparison and Theorem 1 of \cite{fpinv} to prove that every solution is bounded for certain systems of rational difference equations.
For the following result we use Theorem 1 of \cite{fpinv} to iterate with respect to $x_{n}$ and then we use the fact that there exists $M_{1},M_{3}>0$ and $M_{4}\geq M_{2}\geq 0$ so that $x_{n}\leq M_{1}y_{n}+M_{2}\leq M_{3}x_{n}+M_{4}$ for all $n\in\mathbb{N}$.

\begin{thm}
 Suppose that we have a $k^{th}$ order system of two rational
difference equations
$$x_n=\frac{\alpha+\sum^{k}_{i=1}\beta_{i}x_{n-i} + \sum^{k}_{i=1}\gamma_{i}y_{n-i}}{A+\sum^{k}_{j=1}B_{j}x_{n-j} + \sum^{k}_{j=1}C_{j}y_{n-j}},\quad n\in\mathbb{N},$$
$$y_n=\frac{p+\sum^{k}_{i=1}\delta_{i}x_{n-i} + \sum^{k}_{i=1}\epsilon_{i}y_{n-i}}{q+\sum^{k}_{j=1}D_{j}x_{n-j} + \sum^{k}_{j=1}E_{j}y_{n-j}},\quad n\in\mathbb{N},$$
with non-negative parameters and non-negative initial conditions.
Further assume that there exists $M_{1},M_{3}>0$ and $M_{4}\geq M_{2}\geq 0$ so that $x_{n}\leq M_{1}y_{n}+M_{2}\leq M_{3}x_{n}+M_{4}$ for all $n\in\mathbb{N}$, and suppose that $A>0$ and there exists a positive integer $\eta$, such that for every sequence $\{c_{m}\} ^{\infty}_{m=1} $ with $c_m\in I_{\beta}\cup I_{\gamma}$ for $m=1,2, \dots $ there exists positive integers,  $
N_{1}
$,$
N_{2} \leq \eta
$ $ $, such that $ \sum^{N_{2}}_{m=N_{1}} c_{m} \in I_{B}\cup I_{C}$.
Then there exists $M > 0$ and $N\in\mathbb{N}$ so that given any non-negative initial conditions, we have $x_{n},y_{n}\leq M$ for all $n>N$.
\end{thm}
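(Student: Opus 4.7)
Following the road map in the sentence preceding the theorem, I would derive a rational difference inequality satisfied by $\{x_n\}$ alone by eliminating $y$ from the right-hand side of the $x$-equation, apply Theorem~1 of \cite{fpinv} to obtain that $\{x_n\}$ is eventually bounded above, and then use the right-hand comparison $M_1 y_n + M_2 \le M_3 x_n + M_4$ to deduce a bound for $\{y_n\}$.

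The numerator is the easy side. From $M_1 y_n + M_2 \le M_3 x_n + M_4$ one gets $y_{n-i} \le (M_3/M_1) x_{n-i} + M_4/M_1$, so the numerator of $x_n$ is at most $\alpha + (M_4/M_1)\sum_i \gamma_i + \sum_i (\beta_i + (M_3/M_1)\gamma_i) x_{n-i}$, whose positive-coefficient index set is still exactly $I_\beta \cup I_\gamma$.

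The main obstacle will be the denominator, since the other comparison $x_n \le M_1 y_n + M_2$ only yields $y_{n-j} \ge (x_{n-j} - M_2)/M_1$, which can be negative and so cannot be substituted directly when $x_{n-j}$ is small. Mimicking the splitting trick used in Theorems~12 and~15 of this paper, I would introduce
\[
G_j = \min\!\left(\frac{C_j}{2M_1},\ \frac{A}{4k(M_2+1)}\right),
\]
so that $G_j > 0$ iff $C_j > 0$. For any index $j$ with $x_{n-j} > 2(M_2 + 1)$ one has $y_{n-j} \ge (x_{n-j} - M_2)/M_1 \ge x_{n-j}/(2M_1)$, and hence $C_j y_{n-j} \ge G_j x_{n-j}$; for the remaining indices, $G_j x_{n-j} \le G_j\cdot 2(M_2 + 1) \le A/(2k)$, so their total contribution is at most $A/2$. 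Splitting $A = A/2 + A/2$ and absorbing this excess in the second half gives
\[
A + \sum_{j=1}^{k} B_j x_{n-j} + \sum_{j=1}^{k} C_j y_{n-j} \;\ge\; \frac{A}{2} + \sum_{j=1}^{k} (B_j + G_j) x_{n-j}.
\]

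Combining the numerator and denominator estimates, $\{x_n\}$ satisfies a rational difference inequality whose denominator has strictly positive constant $A/2$ and positive-coefficient index set equal to $I_B \cup I_C$, while the numerator has positive-coefficient index set $I_\beta \cup I_\gamma$. The $\eta$-hypothesis of the theorem is then exactly the index condition required to apply Theorem~1 of \cite{fpinv}, which produces $M > 0$ and $N \in \mathbb{N}$ with $x_n \le M$ for all $n > N$. The bound on $\{y_n\}$ then follows at once from $y_n \le (M_3/M_1) x_n + M_4/M_1$.
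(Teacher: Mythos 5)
Your proposal is correct and follows essentially the same route as the paper: bound the numerator via $y_{n-i}\leq (M_{3}x_{n-i}+M_{4}-M_{2})/M_{1}$, bound the denominator below by splitting $A=\frac{A}{2}+\frac{A}{2}$ and replacing $C_{j}y_{n-j}$ by a truncated coefficient times $x_{n-j}$ (the paper's $F_{j}=\min(\frac{C_{j}}{M_{1}},\frac{A}{2k(M_{2}+1)})$ plays the role of your $G_{j}$), then invoke Theorem~1 of \cite{fpinv} and finish with the right-hand comparison. Your constants differ only by harmless factors of $2$, and your case analysis justifying the absorption of the $-M_{2}$ defect into $\frac{A}{2}$ is if anything spelled out more carefully than in the paper.
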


\begin{proof}
 We have
$$x_{n}=\frac{\alpha+\sum^{k}_{i=1}\beta_{i}x_{n-i} + \sum^{k}_{i=1}\gamma_{i}y_{n-i}}{A+\sum^{k}_{j=1}B_{j}x_{n-j} + \sum^{k}_{j=1}C_{j}y_{n-j}}$$
$$\leq \frac{\alpha+\sum^{k}_{i=1}\beta_{i}x_{n-i} + (\frac{M_{3}}{M_{1}})\sum^{k}_{i=1}\gamma_{i}x_{n-i}+(\frac{M_{4}-M_{2}}{M_{1}})\sum^{k}_{i=1}\gamma_{i}}{\frac{A}{2}+\sum^{k}_{j=1}B_{j}x_{n-j} + \sum^{k}_{j=1}F_{j}x_{n-j}},$$
where $F_{j}=\min(\frac{C_{j}}{M_{1}},\frac{A}{2k(M_{2}+1)})$. Thus we see that the sequence $\{x_{n}\}^{\infty}_{n=1}$ satisfies the above difference inequality. Moreover using Theorem 1 in \cite{fpinv} we see that there exists $M > 0$ and $N\in\mathbb{N}$ so that given any non-negative initial conditions, we have $x_{n}\leq M$ for all $n>N$. Since there exists $M_{1},M_{3}>0$ and $M_{4}\geq M_{2}\geq 0$ so that $x_{n}\leq M_{1}y_{n}+M_{2}\leq M_{3}x_{n}+M_{4}$ for all $n\in\mathbb{N}$, we have the full result.\newline

\end{proof}

For the following theorem we use Theorem 1 of \cite{fpinv} to iterate with respect to $y_{n}$ and then we use the fact that there exists $M_{1},M_{3}>0$ and $M_{4}\geq M_{2}\geq 0$ so that $x_{n}\leq M_{1}y_{n}+M_{2}\leq M_{3}x_{n}+M_{4}$ for all $n\in\mathbb{N}$.

\begin{thm}
  Suppose that we have a $k^{th}$ order system of two rational
difference equations
$$x_n=\frac{\alpha+\sum^{k}_{i=1}\beta_{i}x_{n-i} + \sum^{k}_{i=1}\gamma_{i}y_{n-i}}{A+\sum^{k}_{j=1}B_{j}x_{n-j} + \sum^{k}_{j=1}C_{j}y_{n-j}},\quad n\in\mathbb{N},$$
$$y_n=\frac{p+\sum^{k}_{i=1}\delta_{i}x_{n-i} + \sum^{k}_{i=1}\epsilon_{i}y_{n-i}}{q+\sum^{k}_{j=1}D_{j}x_{n-j} + \sum^{k}_{j=1}E_{j}y_{n-j}},\quad n\in\mathbb{N},$$
with non-negative parameters and non-negative initial conditions.
Further assume that there exists $M_{1},M_{3}>0$ and $M_{4}\geq M_{2}\geq 0$ so that $x_{n}\leq M_{1}y_{n}+M_{2}\leq M_{3}x_{n}+M_{4}$ for all $n\in\mathbb{N}$, and suppose that $q>0$ and there exists a positive integer $\eta$, such that for every sequence $\{c_{m}\} ^{\infty}_{m=1} $ with $c_m\in I_{\delta}\cup I_{\epsilon}$ for $m=1,2, \dots $ there exists positive integers,  $
N_{1}
$,$
N_{2} \leq \eta
$ $ $, such that $ \sum^{N_{2}}_{m=N_{1}} c_{m} \in I_{D}\cup I_{E}$.
Then there exists $M > 0$ and $N\in\mathbb{N}$ so that given any non-negative initial conditions, we have $x_{n},y_{n}\leq M$ for all $n>N$.
\end{thm}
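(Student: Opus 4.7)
The plan is to iterate with respect to $y_n$, mirroring the strategy of Theorem 20 with the roles of $x$ and $y$ reversed. First, I would use the left-hand comparison $x_{n-i}\leq M_1 y_{n-i}+M_2$ to upper-bound the numerator of the $y_n$ equation by $p+M_2\sum_{i=1}^{k}\delta_i+\sum_{i=1}^{k}(M_1\delta_i+\epsilon_i)y_{n-i}$, which is a non-negative constant plus a non-negative linear combination of past $y$-values whose non-zero coefficients are indexed exactly by $I_\delta\cup I_\epsilon$.

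The key step is the lower bound on the denominator. I would set
$$G_j=\min\!\left(\frac{D_j M_1}{M_3},\ \frac{q M_1}{2k(M_4-M_2+1)}\right),\qquad j=1,\dots,k,$$
and claim that
$$q+\sum_{j=1}^{k} D_j x_{n-j}+\sum_{j=1}^{k} E_j y_{n-j}\ \geq\ \frac{q}{2}+\sum_{j=1}^{k}(E_j+G_j)\,y_{n-j}.$$
From the right-hand comparison $M_1 y_{n-j}\leq M_3 x_{n-j}+(M_4-M_2)$ each term satisfies $G_j y_{n-j}\leq (G_jM_3/M_1)x_{n-j}+G_j(M_4-M_2)/M_1\leq D_j x_{n-j}+G_j(M_4-M_2)/M_1$ by the two halves of the minimum, and summing the trailing constants gives at most $k\cdot\frac{qM_1}{2k(M_4-M_2+1)}\cdot\frac{M_4-M_2}{M_1}<\frac{q}{2}$. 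The $+1$ in the denominator of $G_j$ is exactly what allows the degenerate case $M_4=M_2$ to be handled uniformly.

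Combining the two bounds yields a rational difference inequality for $\{y_n\}$ alone in the standard form of \cite{fpinv}. The non-zero numerator indices are $I_\delta\cup I_\epsilon$, and since $G_j>0$ iff $D_j>0$, the non-zero denominator indices are $I_D\cup I_E$; thus the hypothesis that every sequence $\{c_m\}$ with $c_m\in I_\delta\cup I_\epsilon$ admits positive integers $N_1,N_2\leq\eta$ with $\sum_{m=N_1}^{N_2}c_m\in I_D\cup I_E$ is the iteration hypothesis required by Theorem 1 of \cite{fpinv}. Since $q/2>0$, Theorem 1 of \cite{fpinv} delivers $M>0$ and $N\in\mathbb{N}$ with $y_n\leq M$ for $n>N$, and the comparison $x_n\leq M_1 y_n+M_2$ then bounds $\{x_n\}$.

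The main obstacle is calibrating $G_j$ so that both $G_j M_3/M_1\leq D_j$ (to absorb the $G_j y$-contribution into $D_j x$) and $\sum_j G_j(M_4-M_2)/M_1\leq q/2$ (to absorb the leftover constant into $q/2$) hold simultaneously, while also degenerating gracefully when $M_4=M_2$; the minimum construction with the $+1$ is precisely tuned for this. Once it is in place, the rest is a routine mirror of the preceding argument.
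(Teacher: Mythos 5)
Your proposal is correct and follows essentially the same route as the paper: the identical choice $G_{j}=\min\bigl(\tfrac{D_{j}M_{1}}{M_{3}},\tfrac{qM_{1}}{2k(M_{4}-M_{2}+1)}\bigr)$, the same reduction to a difference inequality in $y_{n}$ alone, and the same appeal to Theorem 1 of \cite{fpinv} followed by the comparison $x_{n}\leq M_{1}y_{n}+M_{2}$. The only difference is that you spell out the verification that $\sum_{j}G_{j}y_{n-j}\leq \sum_{j}D_{j}x_{n-j}+\tfrac{q}{2}$, which the paper leaves implicit.
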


\begin{proof}
  We have
$$y_{n}=\frac{p+\sum^{k}_{i=1}\delta_{i}x_{n-i} + \sum^{k}_{i=1}\epsilon_{i}y_{n-i}}{q+\sum^{k}_{j=1}D_{j}x_{n-j} + \sum^{k}_{j=1}E_{j}y_{n-j}} \leq \frac{p+\sum^{k}_{i=1}\delta_{i}M_{2}+\sum^{k}_{i=1}\delta_{i}M_{1}y_{n-i} + \sum^{k}_{i=1}\epsilon_{i}y_{n-i}}{\frac{q}{2}+\sum^{k}_{j=1}G_{j}y_{n-j} + \sum^{k}_{j=1}E_{j}y_{n-j}},$$
where $G_{j}=\min(\frac{D_{j}M_{1}}{M_{3}},\frac{qM_{1}}{2k(M_{4}-M_{2}+1)})$. Thus we see that the sequence $\{y_{n}\}^{\infty}_{n=1}$ satisfies the above difference inequality. Again using Theorem 1 in \cite{fpinv} we see that there exists $M > 0$ and $N\in\mathbb{N}$ so that given any non-negative initial conditions, we have $y_{n}\leq M$ for all $n>N$. Since there exists $M_{1},M_{3}>0$ and $M_{4}\geq M_{2}\geq 0$ so that $x_{n}\leq M_{1}y_{n}+M_{2}\leq M_{3}x_{n}+M_{4}$ for all $n\in\mathbb{N}$, we have the full result.\newline

\end{proof}

For the following result we first show that $x_{n}$ or $y_{n}$ is bounded below by a constant. Then we use Theorem 1 of \cite{fpinv} to iterate with respect to $x_{n}$. The fact that there exists $M_{1},M_{3}>0$ and $M_{4}\geq M_{2}\geq 0$ so that $x_{n}\leq M_{1}y_{n}+M_{2}\leq M_{3}x_{n}+M_{4}$ for all $n\in\mathbb{N}$ gives us the full result.

\begin{thm}
  Suppose that we have a $k^{th}$ order system of two rational
difference equations
$$x_n=\frac{\alpha+\sum^{k}_{i=1}\beta_{i}x_{n-i} + \sum^{k}_{i=1}\gamma_{i}y_{n-i}}{A+\sum^{k}_{j=1}B_{j}x_{n-j} + \sum^{k}_{j=1}C_{j}y_{n-j}},\quad n\in\mathbb{N},$$
$$y_n=\frac{p+\sum^{k}_{i=1}\delta_{i}x_{n-i} + \sum^{k}_{i=1}\epsilon_{i}y_{n-i}}{q+\sum^{k}_{j=1}D_{j}x_{n-j} + \sum^{k}_{j=1}E_{j}y_{n-j}},\quad n\in\mathbb{N},$$
with non-negative parameters and non-negative initial conditions.
Further assume that there exists $M_{1},M_{3}>0$ and $M_{4}> M_{2}> 0$ so that $x_{n}\leq M_{1}y_{n}+M_{2}\leq M_{3}x_{n}+M_{4}$ for all $n\in\mathbb{N}$, and suppose that $A=0$ and one of the following holds
\begin{enumerate}[(i)]
\item $I_{B}\cup I_{C}\subset I_{\beta}\cup I_{\gamma}$, $\alpha >0$, and $I_{B}\neq\emptyset$
\item $I_{D}\cup I_{E}\subset I_{\delta}\cup I_{\epsilon}$, $p>0$, and $I_{C}\neq\emptyset$
\end{enumerate}
and there exists a positive integer $\eta$, such that for every sequence $\{c_{m}\} ^{\infty}_{m=1} $ with $c_m\in I_{\beta}\cup I_{\gamma}$ for $m=1,2, \dots $ there exists positive integers,  $
N_{1}
$,$
N_{2} \leq \eta
$ $ $, such that $ \sum^{N_{2}}_{m=N_{1}} c_{m} \in I_{B}\cup I_{C}$.
Then there exists $M > 0$ and $N\in\mathbb{N}$ so that given any non-negative initial conditions, we have $x_{n},y_{n}\leq M$ for all $n>N$.
\end{thm}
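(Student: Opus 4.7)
The plan is to mirror the two-stage strategy already used in Theorems~3 and 15: first establish a positive lower bound on $\{x_n\}$ or $\{y_n\}$ depending on whether hypothesis (i) or (ii) holds, then use that lower bound as an ``artificial $A$'' in the $x_n$-equation and iterate via Theorem~1 of \cite{fpinv}. The new ingredient relative to Theorem~15 is that the comparison is affine, $x_n \leq M_1 y_n + M_2 \leq M_3 x_n + M_4$, so at several steps one must borrow the truncation device $F_j = \min(\cdot/M_1,\,\cdot/(2k(M_2+1)))$ introduced in Theorems~20 and 21.

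\textbf{Stage 1 (lower bound).} Under (i), since $I_B \cup I_C \subset I_\beta \cup I_\gamma$, every denominator term $B_j x_{n-j}$ or $C_j y_{n-j}$ with $j \in I_B \cup I_C$ can be dominated---via $x_n \leq M_1 y_n + M_2$ and $y_n \leq (M_3 x_n + M_4 - M_2)/M_1$---by a multiple of the numerator term $\beta_j x_{n-j}$ or $\gamma_j y_{n-j}$ plus a constant. Writing $T_n := \sum_i \beta_i x_{n-i} + \sum_i \gamma_i y_{n-i}$, this yields $x_n \geq (\alpha + T_n)/(K_1 T_n + K_2)$ for positive constants $K_1, K_2$; since the map $T \mapsto (\alpha+T)/(K_1T+K_2)$ is monotone on $[0,\infty)$ with limits $\alpha/K_2$ at $0$ and $1/K_1$ at infinity, and $\alpha > 0$, we obtain $x_n \geq A_2 := \min(\alpha/K_2,\,1/K_1) > 0$. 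Case (ii) is symmetric, applied to the $y_n$-equation using $p>0$ and $I_D \cup I_E \subset I_\delta \cup I_\epsilon$, yielding $y_n \geq A_2 > 0$.

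\textbf{Stage 2 (iteration in $x_n$).} Let $B^* = \sum_j B_j$ and $C^* = \sum_j C_j$. Stage 1 guarantees $A_2 > 0$ with either $x_n \geq A_2$ and $B^* > 0$ (case i) or $y_n \geq A_2$ and $C^* > 0$ (case ii). In either situation
$$\sum_{j} B_j x_{n-j} + \sum_{j} C_j y_{n-j} \;\geq\; A' + \tfrac{1}{2}\sum_{j} B_j x_{n-j} + \tfrac{1}{2}\sum_{j} C_j y_{n-j},$$
where $A' := \min_+(B^*, C^*)\, A_2/2 > 0$. In the numerator I would replace $\gamma_i y_{n-i}$ by $(\gamma_i M_3/M_1)\, x_{n-i} + \gamma_i(M_4-M_2)/M_1$ using the affine comparison, and in the denominator I would lower-bound $\tfrac{1}{2}\sum_j C_j y_{n-j}$ by $\sum_j F_j x_{n-j}$ with $F_j := \min(C_j/(2M_1),\, A'/(2k(M_2+1)))$, splitting on $x_{n-j} \leq M_2+1$ versus $x_{n-j} > M_2+1$ exactly as in Theorem~20. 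The result is a rational difference inequality for $\{x_n\}$ alone whose denominator has strictly positive constant term $A'/2$, whose numerator support is $I_\beta \cup I_\gamma$, and whose denominator support contains $I_B \cup I_C$; the $\eta$-spread hypothesis therefore carries over verbatim, and Theorem~1 of \cite{fpinv} bounds $\{x_n\}$ above. Finally $y_n \leq (M_3 x_n + M_4 - M_2)/M_1$ transfers the bound to $\{y_n\}$.

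\textbf{Main obstacle.} The delicate accounting is at the end of Stage 2: one must verify that after the $F_j$ truncation and the numerator substitutions, the reduced one-variable inequality genuinely has denominator support containing $I_B \cup I_C$, so that the $\eta$-spread hypothesis of the original system still applies as a hypothesis of Theorem~1 of \cite{fpinv}. The asymmetry between cases (i) and (ii) is absorbed by the $\min_+(B^*, C^*)$ device, but care is needed to ensure that in each case the non-zero member of $\{B^*, C^*\}$ is the one selected by $\min_+$---and it is exactly the assumptions $I_B \neq \emptyset$ in (i) and $I_C \neq \emptyset$ in (ii) that make this work.
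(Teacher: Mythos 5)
Your proposal follows essentially the same route as the paper's proof: first a positive lower bound $A_2$ on $\{x_n\}$ (case (i)) or $\{y_n\}$ (case (ii)) obtained from the inclusion hypotheses together with the affine comparison, then the $\min_+(\sum B_j,\sum C_j)\frac{A_2}{2}$ device plus a truncated coefficient $F_j=\min(C_j/(2M_1),\,\cdot)$ to produce a one-variable difference inequality in $x_n$ with positive denominator constant, to which Theorem 1 of \cite{fpinv} applies. The only cosmetic difference is in Stage 1, where you bound the denominator above by an affine function of the numerator and invoke monotonicity of $T\mapsto(\alpha+T)/(K_1T+K_2)$, whereas the paper converts both numerator and denominator into $y$-terms using a truncation $H_i$ that borrows from $\alpha/2$; both yield the same constant lower bound from the same hypotheses.
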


\begin{proof}
 Suppose that $I_{B}\cup I_{C}\subset I_{\beta}\cup I_{\gamma}$, $\alpha > 0$, and $I_{B}\neq\emptyset$ then we have
$$x_{n}=\frac{\alpha+\sum^{k}_{i=1}\beta_{i}x_{n-i} + \sum^{k}_{i=1}\gamma_{i}y_{n-i}}{\sum^{k}_{j=1}B_{j}x_{n-j} + \sum^{k}_{j=1}C_{j}y_{n-j}}\geq \frac{\frac{\alpha}{2} + \sum_{i\in I_{\beta}}H_{i}y_{n-i} + \sum_{i\in I_{\gamma}}\gamma_{i}y_{n-i}}{\sum_{j\in I_{\beta}\cup I_{\gamma}}(B_{j}M_{2}+B_{j}M_{1}y_{n-j}+C_{j}y_{n-j})}$$
$$\geq \frac{\min(\frac{\alpha}{2},\min_{i\in I_{\beta}}(H_{i}),\min_{i\in I_{\gamma}}(\gamma_{i}))}{\sum_{j\in I_{\beta}\cup I_{\gamma}}(B_{j}M_{2}+B_{j}M_{1}+C_{j})},$$
where $H_{i}=\min(\frac{\beta_{i}M_{1}}{M_{3}},\frac{\alpha M_{1}}{2k(M_{4}-M_{2}+1)})$. So in this case $\{x_{n}\}$ is bounded below by a constant.
Now suppose that $I_{D}\cup I_{E}\subset I_{\delta}\cup I_{\epsilon}$, $p>0$, and $I_{C}\neq\emptyset$ then we have
$$y_{n}=\frac{p+\sum^{k}_{i=1}\delta_{i}x_{n-i} + \sum^{k}_{i=1}\epsilon_{i}y_{n-i}}{q+\sum^{k}_{j=1}D_{j}x_{n-j} + \sum^{k}_{j=1}E_{j}y_{n-j}}\geq \frac{\frac{p}{2}+\sum_{i\in I_{\delta}}S_{i}y_{n-i} + \sum_{i\in I_{\epsilon}}\epsilon_{i}y_{n-i}}{q+\sum_{j\in I_{\delta}\cup I_{\epsilon}}(D_{j}M_{2}+D_{j}M_{1}y_{n-j}+ E_{j}y_{n-j})}$$
$$\geq \frac{\min(\frac{p}{2},\min_{i\in I_{\delta}}(S_{i}),\min_{i\in I_{\epsilon}}(\epsilon_{i}))}{\sum_{j\in I_{\delta}\cup I_{\epsilon}}(q+D_{j}M_{2}+D_{j}M_{1}+E_{j})},$$
where $S_{i}=\min(\frac{\delta_{i}M_{1}}{M_{3}},\frac{p M_{1}}{2k(M_{4}-M_{2}+1)})$. So in this case $\{y_{n}\}$ is bounded below by a constant.
Now we have shown that there exists $A_{2}>0$, so that $\{x_{n}\}$ is bounded below by $A_{2}$ and $I_{B}\neq\emptyset$ or $\{y_{n}\}$ is bounded below by $A_{2}$ and $I_{C}\neq\emptyset$.
We use this fact to show the following.
$$x_{n}=\frac{\alpha+\sum^{k}_{i=1}\beta_{i}x_{n-i} + \sum^{k}_{i=1}\gamma_{i}y_{n-i}}{\sum^{k}_{j=1}B_{j}x_{n-j} + \sum^{k}_{j=1}C_{j}y_{n-j}}$$
$$\leq 2\frac{\alpha+\sum^{k}_{i=1}\beta_{i}x_{n-i} + \sum^{k}_{i=1}\gamma_{i}(\frac{M_{3}x_{n-i}+M_{4}-M_{2}}{M_{1}})}{\min_+(\sum^{k}_{j=1}B_{j},\sum^{k}_{j=1}C_{j})A_{2}+\sum^{k}_{j=1}B_{j}x_{n-j} + \sum^{k}_{j=1}C_{j}y_{n-j}}$$
$$\leq 2\frac{\alpha+\sum^{k}_{i=1}\beta_{i}x_{n-i} + \sum^{k}_{i=1}\gamma_{i}(\frac{M_{3}x_{n-i}+M_{4}-M_{2}}{M_{1}})}{\min_+(\sum^{k}_{j=1}B_{j},\sum^{k}_{j=1}C_{j})\frac{A_{2}}{2}+\sum^{k}_{j=1}B_{j}x_{n-j} + \sum^{k}_{j=1}L_{j}x_{n-j}},$$
where $L_{j}=\min(\frac{C_{j}}{M_{1}},\frac{\min_+(\sum^{k}_{j=1}B_{j},\sum^{k}_{j=1}C_{j})A_{2}}{2k(M_{2}+1)})$. Thus we see that the sequence $\{x_{n}\}^{\infty}_{n=1}$ satisfies the above difference inequality. Moreover using Theorem 1 in \cite{fpinv} we see that there exists $M > 0$ and $N\in\mathbb{N}$ so that given any non-negative initial conditions, we have $x_{n}\leq M$ for all $n>N$. Since there exists $M_{1},M_{3}>0$ and $M_{4}> M_{2}> 0$ so that $x_{n}\leq M_{1}y_{n}+M_{2}\leq M_{3}x_{n}+M_{4}$ for all $n\in\mathbb{N}$, we have the full result.\newline

\end{proof}

For the following theorem we first show that $x_{n}$ or $y_{n}$ is bounded below by a constant. Then we use Theorem 1 of \cite{fpinv} to iterate with respect to $y_{n}$. The fact that there exists $M_{1},M_{3}>0$ and $M_{4}\geq M_{2}\geq 0$ so that $x_{n}\leq M_{1}y_{n}+M_{2}\leq M_{3}x_{n}+M_{4}$ for all $n\in\mathbb{N}$ gives us the full result.

\begin{thm}
  Suppose that we have a $k^{th}$ order system of two rational
difference equations
$$x_n=\frac{\alpha+\sum^{k}_{i=1}\beta_{i}x_{n-i} + \sum^{k}_{i=1}\gamma_{i}y_{n-i}}{A+\sum^{k}_{j=1}B_{j}x_{n-j} + \sum^{k}_{j=1}C_{j}y_{n-j}},\quad n\in\mathbb{N},$$
$$y_n=\frac{p+\sum^{k}_{i=1}\delta_{i}x_{n-i} + \sum^{k}_{i=1}\epsilon_{i}y_{n-i}}{q+\sum^{k}_{j=1}D_{j}x_{n-j} + \sum^{k}_{j=1}E_{j}y_{n-j}},\quad n\in\mathbb{N},$$
with non-negative parameters and non-negative initial conditions.
Further assume that there exists $M_{1},M_{3}>0$ and $M_{4}> M_{2}> 0$ so that $x_{n}\leq M_{1}y_{n}+M_{2}\leq M_{3}x_{n}+M_{4}$ for all $n\in\mathbb{N}$, and suppose that $q=0$ and one of the following holds
\begin{enumerate}[(i)]
\item $I_{B}\cup I_{C}\subset I_{\beta}\cup I_{\gamma}$, $\alpha >0$, and $I_{D}\neq\emptyset$
\item $I_{D}\cup I_{E}\subset I_{\delta}\cup I_{\epsilon}$, $p>0$, and $I_{E}\neq\emptyset$
\end{enumerate}
and there exists a positive integer $\eta$, such that for every sequence $\{c_{m}\} ^{\infty}_{m=1} $ with $c_m\in I_{\delta}\cup I_{\epsilon}$ for $m=1,2, \dots $ there exists positive integers,  $
N_{1}
$,$
N_{2} \leq \eta
$ $ $, such that $ \sum^{N_{2}}_{m=N_{1}} c_{m} \in I_{D}\cup I_{E}$.
Then there exists $M > 0$ and $N\in\mathbb{N}$ so that given any non-negative initial conditions, we have $x_{n},y_{n}\leq M$ for all $n>N$.
\end{thm}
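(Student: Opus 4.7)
My plan is to mirror the proof of the preceding Theorem~20 with the roles of the two equations swapped: the assumption $q=0$ (in place of $A=0$) and the conditions $I_{D}\neq\emptyset$ or $I_{E}\neq\emptyset$ (in place of $I_{B}$ or $I_{C}$) make the natural direction of iteration the $y_n$-equation, and the chain inequality $x_n\leq M_1 y_n+M_2\leq M_3 x_n+M_4$ lets us interchange the two variables in either direction at the cost of an additive constant. The argument splits into two stages: first a uniform positive lower bound for one of the sequences, then a one-variable difference inequality for $\{y_n\}$ to which Theorem~1 of \cite{fpinv} applies.

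Under hypothesis (i), I would bound $x_n$ from below by the computation used in Theorem~20(i): in the numerator split $\alpha=\alpha/2+\alpha/2$, replace each $\beta_i x_{n-i}$ by $\beta_i(M_1 y_{n-i}-(M_4-M_2))/M_3$ using the lower half of the comparison, and absorb the negative additive constant into $\alpha/2$ by choosing coefficients $H_i=\min(\beta_i M_1/M_3,\; \alpha M_1/(2k(M_4-M_2+1)))$; in the denominator upper-bound each $x_{n-j}$ by $M_1 y_{n-j}+M_2$. The subset hypothesis $I_B\cup I_C\subset I_\beta\cup I_\gamma$ keeps the resulting ratio indexed over the same set, so it is bounded below by a positive constant. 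A completely symmetric computation, using $p>0$ and $I_D\cup I_E\subset I_\delta\cup I_\epsilon$, handles case (ii) and yields a positive lower bound $A_2>0$ for $y_n$.

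The second stage uses that lower bound together with $q=0$: since $I_D\neq\emptyset$ in case (i) or $I_E\neq\emptyset$ in case (ii), at least one term $D_{j_0} x_{n-j_0}$ or $E_{j_0} y_{n-j_0}$ in the denominator of the $y_n$-equation is bounded below by a positive constant, so after splitting this denominator in half and substituting $x_{n-i}\leq M_1 y_{n-i}+M_2$ on every $x$-occurrence (using a $\min_+$ construction and an auxiliary $F_j=\min(D_j/M_1,\; c/(2k(M_2+1)))$ as in Theorem~19 to produce $y$-terms in the denominator), I obtain a difference inequality for $\{y_n\}$ purely in the $y$-variables, of the form accepted by Theorem~1 of \cite{fpinv}; the required numerator-to-denominator subordination is exactly the iteration hypothesis on $I_\delta\cup I_\epsilon$ and $I_D\cup I_E$. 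That theorem yields $y_n\leq M_0$ eventually, and the comparison $x_n\leq M_1 y_n+M_2$ then delivers the full bound on both sequences.

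The main obstacle will be the careful bookkeeping in the lower-bound step: pushing $H_i$ (and the analogous $S_i$ in case (ii)) small enough to absorb the $(M_4-M_2)/M_3$ leftover into $\alpha/2$ (respectively $p/2$) without losing the matching of indices between numerator and denominator, and likewise choosing $F_j$ small enough in the second stage that the additive $M_2$ contribution from $x_{n-j}\leq M_1 y_{n-j}+M_2$ is absorbed into half of the denominator lower bound. Once those constants are chosen, the remaining algebra is a routine repetition of the manipulations already used in Theorems~15 through~20.
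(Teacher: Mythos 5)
Your architecture is the paper's: a uniform positive lower bound first (your $H_i$ in case (i) and the symmetric $S_i$ in case (ii) are exactly the paper's), then a one-variable difference inequality for $\{y_n\}$ fed to Theorem 1 of \cite{fpinv}, then the comparison $x_n\leq M_1y_n+M_2$ to recover the bound on $x_n$. Your stage one is correct, including the use of the lower half of the chain, $x_{n-i}\geq (M_1y_{n-i}+M_2-M_4)/M_3$, in the numerator and the upper half, $x_{n-j}\leq M_1y_{n-j}+M_2$, in the denominator.

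The gap is in your stage-two denominator. To turn $\sum_j D_jx_{n-j}+\sum_j E_jy_{n-j}$ into a lower bound of the form $(\mathrm{const})+\sum_j(\mathrm{coef})\,y_{n-j}$ you need a \emph{lower} bound on $x_{n-j}$ in terms of $y_{n-j}$, and the inequality you propose to substitute on ``every $x$-occurrence,'' namely $x_{n-j}\leq M_1y_{n-j}+M_2$, goes the wrong way there: it only upper-bounds the denominator, which is useless for upper-bounding $y_n$. Likewise the auxiliary coefficient you quote, $F_j=\min(D_j/M_1,\,c/(2k(M_2+1)))$, belongs to the theorems whose hypothesis is the one-sided comparison $y_n\leq M_1x_n+M_2$, which yields $x_n\geq (y_n-M_2)/M_1$; that hypothesis is not available here. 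Under the present chain comparison the correct substitution in the denominator is again the second half of the chain, $x_{n-j}\geq (M_1y_{n-j}+M_2-M_4)/M_3$, so the coefficient must be $D_jM_1/M_3$ and the additive constant to be absorbed into half of the $\min_+\bigl(\sum_jD_j,\sum_jE_j\bigr)q_2$ term is $(M_4-M_2)/M_3$ per term; this is precisely the paper's $R_j=\min\bigl(D_jM_1/M_3,\ \min_+(\sum_jD_j,\sum_jE_j)\,q_2M_1/(2k(M_4-M_2+1))\bigr)$. With that one directional correction your argument coincides with the paper's proof.
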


\begin{proof}
 Suppose that $I_{B}\cup I_{C}\subset I_{\beta}\cup I_{\gamma}$, $\alpha > 0$, and $I_{D}\neq\emptyset$ then we have
$$x_{n}=\frac{\alpha+\sum^{k}_{i=1}\beta_{i}x_{n-i} + \sum^{k}_{i=1}\gamma_{i}y_{n-i}}{A+\sum^{k}_{j=1}B_{j}x_{n-j} + \sum^{k}_{j=1}C_{j}y_{n-j}}\geq \frac{\frac{\alpha}{2} + \sum_{i\in I_{\beta}}H_{i}y_{n-i} + \sum_{i\in I_{\gamma}}\gamma_{i}y_{n-i}}{A+\sum_{j\in I_{\beta}\cup I_{\gamma}}(B_{j}M_{2}+B_{j}M_{1}y_{n-j}+C_{j}y_{n-j})}$$
$$\geq \frac{\min(\frac{\alpha}{2},\min_{i\in I_{\beta}}(H_{i}),\min_{i\in I_{\gamma}}(\gamma_{i}))}{\sum_{j\in I_{\beta}\cup I_{\gamma}}(A+B_{j}M_{2}+B_{j}M_{1}+C_{j})},$$
where $H_{i}=\min(\frac{\beta_{i}M_{1}}{M_{3}},\frac{\alpha M_{1}}{2k(M_{4}-M_{2}+1)})$. So in this case $\{x_{n}\}$ is bounded below by a constant.
Now suppose that $I_{D}\cup I_{E}\subset I_{\delta}\cup I_{\epsilon}$, $p>0$, and $I_{E}\neq\emptyset$ then we have
$$y_{n}=\frac{p+\sum^{k}_{i=1}\delta_{i}x_{n-i} + \sum^{k}_{i=1}\epsilon_{i}y_{n-i}}{\sum^{k}_{j=1}D_{j}x_{n-j} + \sum^{k}_{j=1}E_{j}y_{n-j}}\geq \frac{\frac{p}{2}+\sum_{i\in I_{\delta}}S_{i}y_{n-i} + \sum_{i\in I_{\epsilon}}\epsilon_{i}y_{n-i}}{\sum_{j\in I_{\delta}\cup I_{\epsilon}}(D_{j}M_{2}+D_{j}M_{1}y_{n-j}+ E_{j}y_{n-j})}$$
$$\geq \frac{\min(\frac{p}{2},\min_{i\in I_{\delta}}(S_{i}),\min_{i\in I_{\epsilon}}(\epsilon_{i}))}{\sum_{j\in I_{\delta}\cup I_{\epsilon}}(D_{j}M_{2}+D_{j}M_{1}+E_{j})},$$
where $S_{i}=\min(\frac{\delta_{i}M_{1}}{M_{3}},\frac{p M_{1}}{2k(M_{4}-M_{2}+1)})$. So in this case $\{y_{n}\}$ is bounded below by a constant.
Now we have shown that there exists $q_{2}>0$, so that $\{x_{n}\}$ is bounded below by $q_{2}$ and $I_{D}\neq\emptyset$ or $\{y_{n}\}$ is bounded below by $q_{2}$ and $I_{E}\neq\emptyset$.
We use this fact to show the following.
$$y_{n}=\frac{p+\sum^{k}_{i=1}\delta_{i}x_{n-i} + \sum^{k}_{i=1}\epsilon_{i}y_{n-i}}{\sum^{k}_{j=1}D_{j}x_{n-j} + \sum^{k}_{j=1}E_{j}y_{n-j}}$$
$$\leq 2\frac{p+\sum^{k}_{i=1}\delta_{i}M_{1}y_{n-i} + \sum^{k}_{i=1}\epsilon_{i}y_{n-i}+\sum^{k}_{i=1}\delta_{i}M_{2}}{\min_+(\sum^{k}_{j=1}D_{j},\sum^{k}_{j=1}E_{j})q_{2}+\sum^{k}_{j=1}D_{j}x_{n-j} + \sum^{k}_{j=1}E_{j}y_{n-j}}$$
$$\leq 2\frac{p+\sum^{k}_{i=1}\delta_{i}M_{1}y_{n-i} + \sum^{k}_{i=1}\epsilon_{i}y_{n-i}+\sum^{k}_{i=1}\delta_{i}M_{2}}{\min_+(\sum^{k}_{j=1}D_{j},\sum^{k}_{j=1}E_{j})\frac{q_{2}}{2}+\sum^{k}_{j=1}R_{j}y_{n-j} + \sum^{k}_{j=1}E_{j}y_{n-j}},$$
where $R_{j}=\min(\frac{D_{j}M_{1}}{M_{3}},\frac{\min_+(\sum^{k}_{j=1}D_{j},\sum^{k}_{j=1}E_{j})q_{2}M_{1}}{2k(M_{4}-M_{2}+1)})$. Thus we see that the sequence $\{y_{n}\}^{\infty}_{n=1}$ satisfies the above difference inequality. Again using Theorem 1 in \cite{fpinv} we see that there exists $M > 0$ and $N\in\mathbb{N}$ so that given any non-negative initial conditions, we have $y_{n}\leq M$ for all $n>N$.
Since there exists $M_{1},M_{3}>0$ and $M_{4}> M_{2}> 0$ so that $x_{n}\leq M_{1}y_{n}+M_{2}\leq M_{3}x_{n}+M_{4}$ for all $n\in\mathbb{N}$, we have the full result.\newline

\end{proof}

\section{Conditions of Comparability}

Since the comparisons presented above provide us with such useful tools to study boundedness we devote this section to determining whether a system of two equations satisfies any of the necessary inequalities.

\begin{thm}
Suppose that we have a $k^{th}$ order system of two rational
difference equations
$$x_n=\frac{\alpha+\sum^{k}_{i=1}\beta_{i}x_{n-i} + \sum^{k}_{i=1}\gamma_{i}y_{n-i}}{A+\sum^{k}_{j=1}B_{j}x_{n-j} + \sum^{k}_{j=1}C_{j}y_{n-j}},\quad n\in\mathbb{N},$$
$$y_n=\frac{p+\sum^{k}_{i=1}\delta_{i}x_{n-i} + \sum^{k}_{i=1}\epsilon_{i}y_{n-i}}{q+\sum^{k}_{j=1}D_{j}x_{n-j} + \sum^{k}_{j=1}E_{j}y_{n-j}},\quad n\in\mathbb{N},$$
with non-negative parameters and non-negative initial conditions.
Further suppose that $ I_{\delta}\subset I_{\beta}$, $I_{B}\subset I_{D}$, $ I_{\epsilon}\subset I_{\gamma}$, $I_{C}\subset I_{E}$. Also assume that whenever  $A>0$, then $q>0$, and whenever $p > 0$, then $\alpha >0$. Then there exists $M_{1}>0$ so that $y_{n}\leq M_{1}x_{n}$ for all $n\in\mathbb{N}$.
\end{thm}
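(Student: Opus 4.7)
The plan is to establish the pointwise inequality $y_n\le M_1 x_n$ directly, without any iteration or appeal to Theorem 1 of \cite{fpinv}. Writing $x_n=N_x/D_x$ and $y_n=N_y/D_y$ with the obvious meanings for the numerators and denominators of the two defining equations, the desired bound is equivalent, after clearing denominators, to
$$N_y\cdot D_x\le M_1\cdot N_x\cdot D_y.$$
It therefore suffices to find constants $K_1,K_2>0$ with $N_y\le K_1 N_x$ and $D_x\le K_2 D_y$ for every $n$, after which $M_1:=K_1K_2$ does the job.

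First I would construct $K_1$. The hypothesis $I_\delta\subset I_\beta$ says that for every $i\in I_\delta$ the coefficient $\beta_i$ is positive, so one may write $\delta_i x_{n-i}=(\delta_i/\beta_i)\beta_i x_{n-i}$; analogously $\epsilon_i y_{n-i}=(\epsilon_i/\gamma_i)\gamma_i y_{n-i}$ for $i\in I_\epsilon\subset I_\gamma$. The constant $p$ is absorbed by $(p/\alpha)\alpha$ when $p>0$, which is legitimate because in that case the hypothesis forces $\alpha>0$; when $p=0$ there is nothing to absorb. Taking $K_1$ to be the maximum of the ratios $p/\alpha$, $\delta_i/\beta_i$ for $i\in I_\delta$, and $\epsilon_i/\gamma_i$ for $i\in I_\epsilon$ (omitting any ratio whose numerator is zero) yields $N_y\le K_1 N_x$ term-by-term.

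The constant $K_2$ is built in exactly the same way on the denominators: $I_B\subset I_D$ gives $B_j x_{n-j}\le(B_j/D_j)D_j x_{n-j}$, $I_C\subset I_E$ gives the analogous bound for the $C_j y_{n-j}$ terms, and the hypothesis ``$A>0$ implies $q>0$'' lets one replace $A$ by $(A/q)q$ when $A>0$. Multiplying $N_y\le K_1N_x$ by $D_x\le K_2 D_y$ and dividing through by $D_x D_y$ (which is automatically positive wherever $x_n$ and $y_n$ are defined) gives $y_n\le K_1K_2\, x_n$, so $M_1=K_1K_2$ is the desired constant.

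The argument is purely algebraic; the only care needed is the bookkeeping of edge cases (empty index sets, vanishing constants), and the main point of substance is that the two conditional hypotheses ``$p>0\Rightarrow\alpha>0$'' and ``$A>0\Rightarrow q>0$'' are exactly what is needed so that the denominators $\alpha$ and $q$ in the ratios $p/\alpha$ and $A/q$ do not vanish on the one occasion each when they are invoked. No iteration, comparison, or external lemma is required: the four index inclusions together with the two conditional scalar inequalities are, by themselves, strong enough to produce $M_1$ by a single pair of elementary estimates.
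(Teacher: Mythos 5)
Your proof is correct and is in essence the same argument the paper gives: a purely algebraic, term-by-term comparison of coefficients justified by the four index inclusions, with the two conditional hypotheses ($p>0\Rightarrow\alpha>0$ and $A>0\Rightarrow q>0$) handling the constant terms exactly as you describe. The only difference is organizational --- you package the estimate as the two clean inequalities $N_y\le K_1N_x$ and $D_x\le K_2D_y$, whereas the paper runs an eight-fold case analysis on the positivity of $\alpha$, $A$, $p$, $q$ and passes through normalized sums of the form $\bigl(1+\sum x_{n-i}+\sum y_{n-i}\bigr)$, arriving at the same constant.
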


\begin{proof}
First notice that it suffices to show that eventually for $n\geq N$ there exists $M>0$ so that $y_{n}\leq Mx_{n}$. This is since we may take $M_{1}=\max(M, \max_{n\in L}\frac{y_{n}}{x_{n}})$ , where $L=\{n\in\mathbb{N}|x_{n}\neq 0$ and  $n<N \}$. Notice that since $ I_{\delta}\subset I_{\beta}$, $ I_{\epsilon}\subset I_{\gamma}$, and whenever $p>0$, then $\alpha >0$ we get that whenever $x_{n}=0$, then $y_{n}=0$. Thus
$y_{n}\leq M_{1}x_{n}$ for all $n\in\mathbb{N}$ with $x_{n}=0$. Now let us prove that the inequality eventually holds. In the case where $A>0$ and $p>0$ we have
$$y_n=\frac{p+\sum^{k}_{i=1}\delta_{i}x_{n-i} + \sum^{k}_{i=1}\epsilon_{i}y_{n-i}}{q+\sum^{k}_{j=1}D_{j}x_{n-j} + \sum^{k}_{j=1}E_{j}y_{n-j}} \leq $$
$$\left(\frac{\max(p,\max_{i\in I_{\delta}}(\delta_{i}),\max_{i\in I_{\epsilon}}(\epsilon_{i}))}{\min(q,\min_{j\in I_{D}}(D_{j}),\min_{j\in I_{E}}(E_{j}))} \right)\frac{1+\sum_{i\in I_{\delta}}x_{n-i} +\sum_{i\in I_{\epsilon}}y_{n-i}}{1+\sum_{j\in I_{D}}x_{n-j} + \sum_{j\in I_{E}}y_{n-j}}\leq$$
$$\left(\frac{\max(p,\max_{i\in I_{\delta}}(\delta_{i}),\max_{i\in I_{\epsilon}}(\epsilon_{i}))}{\min(q,\min_{j\in I_{D}}(D_{j}),\min_{j\in I_{E}}(E_{j}))} \right)\frac{1+\sum_{i\in I_{\beta}}x_{n-i} +\sum_{i\in I_{\gamma}}y_{n-i}}{1+\sum_{j\in I_{B}}x_{n-j} + \sum_{j\in I_{C}}y_{n-j}}\leq$$
$$\left(\frac{(\max(A,\max_{j\in I_{B}}(B_{j}),\max_{j\in I_{C}}(C_{j})))(\max(p,\max_{i\in I_{\delta}}(\delta_{i}),\max_{i\in I_{\epsilon}}(\epsilon_{i})))}{(\min(\alpha,\min_{i\in I_{\beta}}(\beta_{i}),\min_{i\in I_{\gamma}}(\gamma_{i})))(\min(q,\min_{j\in I_{D}}(D_{j}),\min_{j\in I_{E}}(E_{j})))} \right)x_{n}.$$
In the case where $A>0$, $p=0$, and $\alpha >0$ we have
$$y_n=\frac{\sum^{k}_{i=1}\delta_{i}x_{n-i} + \sum^{k}_{i=1}\epsilon_{i}y_{n-i}}{q+\sum^{k}_{j=1}D_{j}x_{n-j} + \sum^{k}_{j=1}E_{j}y_{n-j}} \leq $$
$$\left(\frac{\max(\max_{i\in I_{\delta}}(\delta_{i}),\max_{i\in I_{\epsilon}}(\epsilon_{i}))}{\min(q,\min_{j\in I_{D}}(D_{j}),\min_{j\in I_{E}}(E_{j}))} \right)\frac{\sum_{i\in I_{\delta}}x_{n-i} +\sum_{i\in I_{\epsilon}}y_{n-i}}{1+\sum_{j\in I_{D}}x_{n-j} + \sum_{j\in I_{E}}y_{n-j}}\leq$$
$$\left(\frac{\max(\max_{i\in I_{\delta}}(\delta_{i}),\max_{i\in I_{\epsilon}}(\epsilon_{i}))}{\min(q,\min_{j\in I_{D}}(D_{j}),\min_{j\in I_{E}}(E_{j}))} \right)\frac{1+ \sum_{i\in I_{\beta}}x_{n-i} +\sum_{i\in I_{\gamma}}y_{n-i}}{1+\sum_{j\in I_{B}}x_{n-j} + \sum_{j\in I_{C}}y_{n-j}}\leq$$
$$\left(\frac{(\max(A,\max_{j\in I_{B}}(B_{j}),\max_{j\in I_{C}}(C_{j})))(\max(\max_{i\in I_{\delta}}(\delta_{i}),\max_{i\in I_{\epsilon}}(\epsilon_{i})))}{(\min(\alpha, \min_{i\in I_{\beta}}(\beta_{i}),\min_{i\in I_{\gamma}}(\gamma_{i})))(\min(q,\min_{j\in I_{D}}(D_{j}),\min_{j\in I_{E}}(E_{j})))} \right)x_{n}.$$
In the case where $A>0$, $p=0$, and $\alpha =0$ we have
$$y_n=\frac{\sum^{k}_{i=1}\delta_{i}x_{n-i} + \sum^{k}_{i=1}\epsilon_{i}y_{n-i}}{q+\sum^{k}_{j=1}D_{j}x_{n-j} + \sum^{k}_{j=1}E_{j}y_{n-j}} \leq $$
$$\left(\frac{\max(\max_{i\in I_{\delta}}(\delta_{i}),\max_{i\in I_{\epsilon}}(\epsilon_{i}))}{\min(q,\min_{j\in I_{D}}(D_{j}),\min_{j\in I_{E}}(E_{j}))} \right)\frac{\sum_{i\in I_{\delta}}x_{n-i} +\sum_{i\in I_{\epsilon}}y_{n-i}}{1+\sum_{j\in I_{D}}x_{n-j} + \sum_{j\in I_{E}}y_{n-j}}\leq$$
$$\left(\frac{\max(\max_{i\in I_{\delta}}(\delta_{i}),\max_{i\in I_{\epsilon}}(\epsilon_{i}))}{\min(q,\min_{j\in I_{D}}(D_{j}),\min_{j\in I_{E}}(E_{j}))} \right)\frac{\sum_{i\in I_{\beta}}x_{n-i} +\sum_{i\in I_{\gamma}}y_{n-i}}{1+\sum_{j\in I_{B}}x_{n-j} + \sum_{j\in I_{C}}y_{n-j}}\leq$$
$$\left(\frac{(\max(A,\max_{j\in I_{B}}(B_{j}),\max_{j\in I_{C}}(C_{j})))(\max(\max_{i\in I_{\delta}}(\delta_{i}),\max_{i\in I_{\epsilon}}(\epsilon_{i})))}{(\min(\min_{i\in I_{\beta}}(\beta_{i}),\min_{i\in I_{\gamma}}(\gamma_{i})))(\min(q,\min_{j\in I_{D}}(D_{j}),\min_{j\in I_{E}}(E_{j})))} \right)x_{n}.$$
In the case where $A=0$, $q>0$, and $p>0$ we have
$$y_n=\frac{p+\sum^{k}_{i=1}\delta_{i}x_{n-i} + \sum^{k}_{i=1}\epsilon_{i}y_{n-i}}{q+\sum^{k}_{j=1}D_{j}x_{n-j} + \sum^{k}_{j=1}E_{j}y_{n-j}} \leq $$
$$\left(\frac{\max(p,\max_{i\in I_{\delta}}(\delta_{i}),\max_{i\in I_{\epsilon}}(\epsilon_{i}))}{\min(q,\min_{j\in I_{D}}(D_{j}),\min_{j\in I_{E}}(E_{j}))} \right)\frac{1+\sum_{i\in I_{\delta}}x_{n-i} +\sum_{i\in I_{\epsilon}}y_{n-i}}{1+\sum_{j\in I_{D}}x_{n-j} + \sum_{j\in I_{E}}y_{n-j}}\leq$$
$$\left(\frac{\max(p,\max_{i\in I_{\delta}}(\delta_{i}),\max_{i\in I_{\epsilon}}(\epsilon_{i}))}{\min(q,\min_{j\in I_{D}}(D_{j}),\min_{j\in I_{E}}(E_{j}))} \right)\frac{1+\sum_{i\in I_{\beta}}x_{n-i} +\sum_{i\in I_{\gamma}}y_{n-i}}{\sum_{j\in I_{B}}x_{n-j} + \sum_{j\in I_{C}}y_{n-j}}\leq$$
$$\left(\frac{(\max(\max_{j\in I_{B}}(B_{j}),\max_{j\in I_{C}}(C_{j})))(\max(p,\max_{i\in I_{\delta}}(\delta_{i}),\max_{i\in I_{\epsilon}}(\epsilon_{i})))}{(\min(\alpha,\min_{i\in I_{\beta}}(\beta_{i}),\min_{i\in I_{\gamma}}(\gamma_{i})))(\min(q, \min_{j\in I_{D}}(D_{j}),\min_{j\in I_{E}}(E_{j})))} \right)x_{n}.$$
In the case where $A=0$, $q>0$, $p=0$, and $\alpha = 0$ we have
$$y_n=\frac{\sum^{k}_{i=1}\delta_{i}x_{n-i} + \sum^{k}_{i=1}\epsilon_{i}y_{n-i}}{q+\sum^{k}_{j=1}D_{j}x_{n-j} + \sum^{k}_{j=1}E_{j}y_{n-j}} \leq $$
$$\left(\frac{\max(\max_{i\in I_{\delta}}(\delta_{i}),\max_{i\in I_{\epsilon}}(\epsilon_{i}))}{\min(q,\min_{j\in I_{D}}(D_{j}),\min_{j\in I_{E}}(E_{j}))} \right)\frac{\sum_{i\in I_{\delta}}x_{n-i} +\sum_{i\in I_{\epsilon}}y_{n-i}}{1+\sum_{j\in I_{D}}x_{n-j} + \sum_{j\in I_{E}}y_{n-j}}\leq$$
$$\left(\frac{\max(\max_{i\in I_{\delta}}(\delta_{i}),\max_{i\in I_{\epsilon}}(\epsilon_{i}))}{\min(q,\min_{j\in I_{D}}(D_{j}),\min_{j\in I_{E}}(E_{j}))} \right)\frac{\sum_{i\in I_{\beta}}x_{n-i} +\sum_{i\in I_{\gamma}}y_{n-i}}{\sum_{j\in I_{B}}x_{n-j} + \sum_{j\in I_{C}}y_{n-j}}\leq$$
$$\left(\frac{(\max(\max_{j\in I_{B}}(B_{j}),\max_{j\in I_{C}}(C_{j})))(\max(\max_{i\in I_{\delta}}(\delta_{i}),\max_{i\in I_{\epsilon}}(\epsilon_{i})))}{(\min(\min_{i\in I_{\beta}}(\beta_{i}),\min_{i\in I_{\gamma}}(\gamma_{i})))(\min(q, \min_{j\in I_{D}}(D_{j}),\min_{j\in I_{E}}(E_{j})))} \right)x_{n}.$$
In the case where $A=0$, $q=0$, and $p>0$ we have
$$y_n=\frac{p+\sum^{k}_{i=1}\delta_{i}x_{n-i} + \sum^{k}_{i=1}\epsilon_{i}y_{n-i}}{\sum^{k}_{j=1}D_{j}x_{n-j} + \sum^{k}_{j=1}E_{j}y_{n-j}} \leq $$
$$\left(\frac{\max(p,\max_{i\in I_{\delta}}(\delta_{i}),\max_{i\in I_{\epsilon}}(\epsilon_{i}))}{\min(\min_{j\in I_{D}}(D_{j}),\min_{j\in I_{E}}(E_{j}))} \right)\frac{1+\sum_{i\in I_{\delta}}x_{n-i} +\sum_{i\in I_{\epsilon}}y_{n-i}}{\sum_{j\in I_{D}}x_{n-j} + \sum_{j\in I_{E}}y_{n-j}}\leq$$
$$\left(\frac{\max(p,\max_{i\in I_{\delta}}(\delta_{i}),\max_{i\in I_{\epsilon}}(\epsilon_{i}))}{\min(\min_{j\in I_{D}}(D_{j}),\min_{j\in I_{E}}(E_{j}))} \right)\frac{1+\sum_{i\in I_{\beta}}x_{n-i} +\sum_{i\in I_{\gamma}}y_{n-i}}{\sum_{j\in I_{B}}x_{n-j} + \sum_{j\in I_{C}}y_{n-j}}\leq$$
$$\left(\frac{(\max(\max_{j\in I_{B}}(B_{j}),\max_{j\in I_{C}}(C_{j})))(\max(p,\max_{i\in I_{\delta}}(\delta_{i}),\max_{i\in I_{\epsilon}}(\epsilon_{i})))}{(\min(\alpha,\min_{i\in I_{\beta}}(\beta_{i}),\min_{i\in I_{\gamma}}(\gamma_{i})))(\min( \min_{j\in I_{D}}(D_{j}),\min_{j\in I_{E}}(E_{j})))} \right)x_{n}.$$
In the case where $A=0$, $q=0$, $p=0$, and $\alpha =0 $ we have
$$y_n=\frac{\sum^{k}_{i=1}\delta_{i}x_{n-i} + \sum^{k}_{i=1}\epsilon_{i}y_{n-i}}{\sum^{k}_{j=1}D_{j}x_{n-j} + \sum^{k}_{j=1}E_{j}y_{n-j}} \leq $$
$$\left(\frac{\max(\max_{i\in I_{\delta}}(\delta_{i}),\max_{i\in I_{\epsilon}}(\epsilon_{i}))}{\min(\min_{j\in I_{D}}(D_{j}),\min_{j\in I_{E}}(E_{j}))} \right)\frac{\sum_{i\in I_{\delta}}x_{n-i} +\sum_{i\in I_{\epsilon}}y_{n-i}}{\sum_{j\in I_{D}}x_{n-j} + \sum_{j\in I_{E}}y_{n-j}}\leq$$
$$\left(\frac{\max(\max_{i\in I_{\delta}}(\delta_{i}),\max_{i\in I_{\epsilon}}(\epsilon_{i}))}{\min(\min_{j\in I_{D}}(D_{j}),\min_{j\in I_{E}}(E_{j}))} \right)\frac{\sum_{i\in I_{\beta}}x_{n-i} +\sum_{i\in I_{\gamma}}y_{n-i}}{\sum_{j\in I_{B}}x_{n-j} + \sum_{j\in I_{C}}y_{n-j}}\leq$$
$$\left(\frac{(\max(\max_{j\in I_{B}}(B_{j}),\max_{j\in I_{C}}(C_{j})))(\max(\max_{i\in I_{\delta}}(\delta_{i}),\max_{i\in I_{\epsilon}}(\epsilon_{i})))}{(\min(\min_{i\in I_{\beta}}(\beta_{i}),\min_{i\in I_{\gamma}}(\gamma_{i})))(\min( \min_{j\in I_{D}}(D_{j}),\min_{j\in I_{E}}(E_{j})))} \right)x_{n}.$$
In the case where $A=0$, $q>0$, $p=0$, and $\alpha > 0$ we have
$$y_n=\frac{\sum^{k}_{i=1}\delta_{i}x_{n-i} + \sum^{k}_{i=1}\epsilon_{i}y_{n-i}}{q+\sum^{k}_{j=1}D_{j}x_{n-j} + \sum^{k}_{j=1}E_{j}y_{n-j}} \leq $$
$$\left(\frac{\max(\max_{i\in I_{\delta}}(\delta_{i}),\max_{i\in I_{\epsilon}}(\epsilon_{i}))}{\min(q,\min_{j\in I_{D}}(D_{j}),\min_{j\in I_{E}}(E_{j}))} \right)\frac{\sum_{i\in I_{\delta}}x_{n-i} +\sum_{i\in I_{\epsilon}}y_{n-i}}{1+\sum_{j\in I_{D}}x_{n-j} + \sum_{j\in I_{E}}y_{n-j}}\leq$$
$$\left(\frac{\max(\max_{i\in I_{\delta}}(\delta_{i}),\max_{i\in I_{\epsilon}}(\epsilon_{i}))}{\min(q,\min_{j\in I_{D}}(D_{j}),\min_{j\in I_{E}}(E_{j}))} \right)\frac{1+ \sum_{i\in I_{\beta}}x_{n-i} +\sum_{i\in I_{\gamma}}y_{n-i}}{\sum_{j\in I_{B}}x_{n-j} + \sum_{j\in I_{C}}y_{n-j}}\leq$$
$$\left(\frac{(\max(\max_{j\in I_{B}}(B_{j}),\max_{j\in I_{C}}(C_{j})))(\max(\max_{i\in I_{\delta}}(\delta_{i}),\max_{i\in I_{\epsilon}}(\epsilon_{i})))}{(\min(\alpha, \min_{i\in I_{\beta}}(\beta_{i}),\min_{i\in I_{\gamma}}(\gamma_{i})))(\min(q, \min_{j\in I_{D}}(D_{j}),\min_{j\in I_{E}}(E_{j})))} \right)x_{n}.$$
In the case where $A=0$, $q=0$, $p=0$, and $\alpha >0 $ we have
$$y_n=\frac{\sum^{k}_{i=1}\delta_{i}x_{n-i} + \sum^{k}_{i=1}\epsilon_{i}y_{n-i}}{\sum^{k}_{j=1}D_{j}x_{n-j} + \sum^{k}_{j=1}E_{j}y_{n-j}} \leq $$
$$\left(\frac{\max(\max_{i\in I_{\delta}}(\delta_{i}),\max_{i\in I_{\epsilon}}(\epsilon_{i}))}{\min(\min_{j\in I_{D}}(D_{j}),\min_{j\in I_{E}}(E_{j}))} \right)\frac{\sum_{i\in I_{\delta}}x_{n-i} +\sum_{i\in I_{\epsilon}}y_{n-i}}{\sum_{j\in I_{D}}x_{n-j} + \sum_{j\in I_{E}}y_{n-j}}\leq$$
$$\left(\frac{\max(\max_{i\in I_{\delta}}(\delta_{i}),\max_{i\in I_{\epsilon}}(\epsilon_{i}))}{\min(\min_{j\in I_{D}}(D_{j}),\min_{j\in I_{E}}(E_{j}))} \right)\frac{1+ \sum_{i\in I_{\beta}}x_{n-i} +\sum_{i\in I_{\gamma}}y_{n-i}}{\sum_{j\in I_{B}}x_{n-j} + \sum_{j\in I_{C}}y_{n-j}}\leq$$
$$\left(\frac{(\max(\max_{j\in I_{B}}(B_{j}),\max_{j\in I_{C}}(C_{j})))(\max(\max_{i\in I_{\delta}}(\delta_{i}),\max_{i\in I_{\epsilon}}(\epsilon_{i})))}{(\min(\alpha, \min_{i\in I_{\beta}}(\beta_{i}),\min_{i\in I_{\gamma}}(\gamma_{i})))(\min( \min_{j\in I_{D}}(D_{j}),\min_{j\in I_{E}}(E_{j})))} \right)x_{n}.$$

\end{proof}

\begin{thm}
Suppose that we have a $k^{th}$ order system of two rational
difference equations
$$x_n=\frac{\alpha+\sum^{k}_{i=1}\beta_{i}x_{n-i} + \sum^{k}_{i=1}\gamma_{i}y_{n-i}}{A+\sum^{k}_{j=1}B_{j}x_{n-j} + \sum^{k}_{j=1}C_{j}y_{n-j}},\quad n\in\mathbb{N},$$
$$y_n=\frac{p+\sum^{k}_{i=1}\delta_{i}x_{n-i} + \sum^{k}_{i=1}\epsilon_{i}y_{n-i}}{q+\sum^{k}_{j=1}D_{j}x_{n-j} + \sum^{k}_{j=1}E_{j}y_{n-j}},\quad n\in\mathbb{N},$$
with non-negative parameters and non-negative initial conditions.
Further suppose that $I_{\beta}= I_{\delta}$, $I_{B}= I_{D}$, $I_{\gamma}= I_{\epsilon}$, $I_{C}= I_{E}$, $\alpha>0$ if and only if $p>0$, and $A > 0$ if and only if $q>0$. Then there exists constants $M_{1},M_{2} > 0$ so that $M_{1}y_{n}\leq x_{n} \leq M_{2}y_{n}$ for all $n\in\mathbb{N}$.
\end{thm}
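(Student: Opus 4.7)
The plan is to derive the two inequalities $M_1 y_n \leq x_n$ and $x_n \leq M_2 y_n$ separately by invoking Theorem 20 twice. First I would observe that the symmetric hypotheses of the current theorem, namely $I_{\beta}=I_{\delta}$, $I_{\gamma}=I_{\epsilon}$, $I_{B}=I_{D}$, $I_{C}=I_{E}$, together with the biconditionals $\alpha>0 \Leftrightarrow p>0$ and $A>0 \Leftrightarrow q>0$, imply in particular the one-sided inclusions $I_{\delta}\subset I_{\beta}$, $I_{\epsilon}\subset I_{\gamma}$, $I_{B}\subset I_{D}$, $I_{C}\subset I_{E}$ and the implications $A>0 \Rightarrow q>0$ and $p>0 \Rightarrow \alpha>0$ required by Theorem 20. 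A direct application of Theorem 20 therefore yields a constant $c_1>0$ with $y_n\leq c_1 x_n$ for every $n\in\mathbb{N}$; setting $M_1:=1/c_1$ gives the lower bound $M_1 y_n\leq x_n$.

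For the reverse inequality I would apply Theorem 20 to the system obtained by interchanging the roles of $x_n$ and $y_n$. Under this swap the parameters transform as $\alpha\leftrightarrow p$, $\beta_i\leftrightarrow\epsilon_i$, $\gamma_i\leftrightarrow\delta_i$, $A\leftrightarrow q$, $B_j\leftrightarrow E_j$, $C_j\leftrightarrow D_j$, and the resulting system is again of the rational form to which Theorem 20 applies. Rewritten in the original parameters, the hypotheses of Theorem 20 for the swapped system become the inclusions $I_{\beta}\subset I_{\delta}$, $I_{\gamma}\subset I_{\epsilon}$, $I_{D}\subset I_{B}$, $I_{E}\subset I_{C}$ and the implications $q>0 \Rightarrow A>0$ and $\alpha>0 \Rightarrow p>0$, all of which are immediate consequences of the symmetric hypotheses we have assumed. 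Theorem 20 then yields a constant $M_2>0$ with $x_n\leq M_2 y_n$, and combining this with the previous bound completes the proof.

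The only subtlety, handled exactly as in the proof of Theorem 20, concerns the interpretation of the ratios $y_n/x_n$ and $x_n/y_n$ when a denominator vanishes. Under the symmetric hypotheses here one has $x_n=0$ if and only if $y_n=0$: the forward direction is the observation already made in the proof of Theorem 20 using $I_{\delta}\subset I_{\beta}$, $I_{\epsilon}\subset I_{\gamma}$, and $p>0 \Rightarrow \alpha>0$, while the reverse direction follows from the symmetric inclusions and the converse implication $\alpha>0 \Rightarrow p>0$. Consequently any finite set of initial indices where a ratio is of indeterminate form $0/0$ can be absorbed into the constants $M_1$ and $M_2$ by taking a maximum, precisely as in Theorem 20. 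The main obstacle here is not conceptual but essentially a bookkeeping task: verifying that the parameter substitution carries each hypothesis of Theorem 20 onto a consequence of the stronger symmetric hypotheses stated in the present theorem.
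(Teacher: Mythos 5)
Your proposal is correct and follows essentially the same route as the paper: the paper's proof likewise applies its one-sided comparability result once directly to obtain $y_n\leq M_3 x_n$ and once more after the swap $x_n\leftrightarrow y_n$, $\alpha\leftrightarrow p$, $\beta_i\leftrightarrow\epsilon_i$, $\gamma_i\leftrightarrow\delta_i$, $A\leftrightarrow q$, $B_j\leftrightarrow E_j$, $C_j\leftrightarrow D_j$ to obtain $x_n\leq M_2 y_n$, and then sets $M_1=1/M_3$, with the $x_n=0\Leftrightarrow y_n=0$ subtlety already absorbed into that cited result exactly as you describe. The one correction is the reference: the theorem you invoke (hypotheses $I_{\delta}\subset I_{\beta}$, $I_{B}\subset I_{D}$, $I_{\epsilon}\subset I_{\gamma}$, $I_{C}\subset I_{E}$, $A>0\Rightarrow q>0$, $p>0\Rightarrow\alpha>0$) is Theorem~24 in the paper's numbering, not Theorem~20, which is a different boundedness result.
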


\begin{proof}
First notice that Theorem 24 applies to this system. This gives us $M_{3}>0$ so that $y_{n}\leq M_{3}x_{n}$ for all $n\in\mathbb{N}$. Moreover after a very simple change of variables Theorem 24 applies again. The change of variables we refer to here comes from renaming $x_{n}$ as $y_{n}$, $\beta_{i}$ as $\epsilon_{i}$, $B_{i}$ as $E_{i}$, $\gamma_{i}$ as $\delta_{i}$, $C_{i}$ as $D_{i}$, $\alpha$ as $p$, $A$ as $q$, and vice versa.
This gives us $M_{2}>0$ so that $x_{n}\leq M_{2}y_{n}$ for all $n\in\mathbb{N}$. Choose $M_{1}=\frac{1}{M_{3}}$ and we get that there exists constants $M_{1},M_{2} > 0$ so that $M_{1}y_{n}\leq x_{n} \leq M_{2}y_{n}$ for all $n\in\mathbb{N}$.
\end{proof}

\begin{thm}
Suppose that we have a $k^{th}$ order system of two rational
difference equations
$$x_n=\frac{\alpha+\sum^{k}_{i=1}\beta_{i}x_{n-i} + \sum^{k}_{i=1}\gamma_{i}y_{n-i}}{A+\sum^{k}_{j=1}B_{j}x_{n-j} + \sum^{k}_{j=1}C_{j}y_{n-j}},\quad n\in\mathbb{N},$$
$$y_n=\frac{p+\sum^{k}_{i=1}\delta_{i}x_{n-i} + \sum^{k}_{i=1}\epsilon_{i}y_{n-i}}{q+\sum^{k}_{j=1}D_{j}x_{n-j} + \sum^{k}_{j=1}E_{j}y_{n-j}},\quad n\in\mathbb{N},$$
with non-negative parameters and non-negative initial conditions.
Further suppose that $ I_{\delta}\subset I_{\beta}\cup I_{B}$, $I_{B}\subset I_{D}$, $ I_{\epsilon}\subset I_{\gamma}\cup I_{C}$, $I_{C}\subset I_{E}$. Also assume that whenever  $A>0$, then $q>0$, and whenever $p > 0$, then $\alpha >0$ or $A>0$. Then there exists $M_{1}>0$ and $M_{2}\geq 0$ so that $y_{n}\leq M_{1}x_{n} + M_{2}$ for all $n\in\mathbb{N}$.
\end{thm}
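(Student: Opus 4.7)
The plan is to reduce the statement to Theorem 24 by peeling off, as bounded constants, the contributions from those indices where $I_\delta$ or $I_\epsilon$ escape $I_\beta$ or $I_\gamma$. I would first decompose $I_\delta = (I_\delta\cap I_\beta)\cup(I_\delta\setminus I_\beta)$; the hypothesis $I_\delta \subset I_\beta \cup I_B$ forces $I_\delta \setminus I_\beta \subset I_B$, and similarly $I_\epsilon\setminus I_\gamma \subset I_C$. For each $i \in I_\delta \setminus I_\beta$, the inclusion $I_B \subset I_D$ gives $D_i > 0$, so $D_i x_{n-i}$ appears as a summand in the denominator of $y_n$ and
$$\frac{\delta_i x_{n-i}}{q + \sum_{j=1}^k D_j x_{n-j} + \sum_{j=1}^k E_j y_{n-j}} \leq \frac{\delta_i}{D_i}$$
(trivially when $x_{n-i}=0$). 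The analogous bound $\epsilon_i/E_i$ holds for each $i \in I_\epsilon\setminus I_\gamma$ via $I_C \subset I_E$. Summing over these indices yields
$$y_n \le K + \frac{p + \sum_{i\in I_\delta\cap I_\beta}\delta_i x_{n-i} + \sum_{i\in I_\epsilon\cap I_\gamma}\epsilon_i y_{n-i}}{q + \sum_{j=1}^k D_j x_{n-j} + \sum_{j=1}^k E_j y_{n-j}},$$
where $K := \sum_{i\in I_\delta\setminus I_\beta}\delta_i/D_i + \sum_{i\in I_\epsilon\setminus I_\gamma}\epsilon_i/E_i$ depends only on the parameters.

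The residual fraction now has its numerator supported on $I_\delta\cap I_\beta \subset I_\beta$ and $I_\epsilon\cap I_\gamma \subset I_\gamma$, and its denominator is controlled by the inclusions $I_B\subset I_D$, $I_C\subset I_E$. This is precisely the algebraic setup Theorem 24 handles, and I would step through its eight-case analysis, each case terminating in a bound of the form $C\cdot x_n$ for some parameter-dependent constant $C$.

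The one genuinely new situation not covered by Theorem 24 is $p > 0$ with $\alpha = 0$, which our weaker hypothesis allows provided $A > 0$. In that case the rule ``$A>0 \Rightarrow q>0$'' yields $q>0$, so I would further peel off $p/(q+\cdots) \le p/q$ as an additive constant and apply Theorem 24's argument to the remaining $p$-free residual fraction, which now vacuously satisfies ``$p>0 \Rightarrow \alpha>0$.'' Combining everything gives $y_n \le M_1 x_n + (K + p/q)$ for all sufficiently large $n$; the finitely many earlier indices are absorbed by enlarging the additive constant to the final $M_2$, analogously to the initial-segment reduction at the start of Theorem 24's proof.

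The main obstacle is bookkeeping: in every Theorem 24 sub-case I must verify that replacing $I_\delta, I_\epsilon$ by $I_\delta\cap I_\beta, I_\epsilon\cap I_\gamma$ preserves the chain of algebraic inequalities, and that the new case $\alpha=0 < p$ reduces cleanly after the $p/q$ extraction. Since Theorem 24's argument is purely algebraic and uses no iteration condition, the reduction goes through directly, and the final constants $M_1, M_2$ depend only on the original parameters of the system.
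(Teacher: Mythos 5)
Your argument is correct, but it follows a genuinely different route from the paper's. The paper does not peel off the escaping indices and does not invoke Theorem 24; it rewrites the same eight-case analysis, and the only change from Theorem 24's proof occurs at the final comparison: after bounding $y_n$ by a parameter constant times
$$\frac{1+\sum_{i\in I_{\delta}}x_{n-i}+\sum_{i\in I_{\epsilon}}y_{n-i}}{1+\sum_{j\in I_{D}}x_{n-j}+\sum_{j\in I_{E}}y_{n-j}},$$
it enlarges the numerator index sets to $I_{\beta}\cup I_{B}$ and $I_{\gamma}\cup I_{C}$ (exactly what the weakened hypotheses $I_{\delta}\subset I_{\beta}\cup I_{B}$, $I_{\epsilon}\subset I_{\gamma}\cup I_{C}$ allow) and then compares the resulting ratio to
$$\frac{\alpha+A+\sum^{k}_{i=1}(\beta_{i}+B_{i})x_{n-i}+\sum^{k}_{i=1}(\gamma_{i}+C_{i})y_{n-i}}{A+\sum^{k}_{j=1}B_{j}x_{n-j}+\sum^{k}_{j=1}C_{j}y_{n-j}}=x_{n}+1,$$
so the additive constant $M_{2}$ comes from the identity $(\mathrm{numerator}+\mathrm{denominator})/\mathrm{denominator}=x_{n}+1$ rather than from explicitly extracted terms. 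Your peeling step is sound: for $i\in I_{\delta}\setminus I_{\beta}$ the inclusions force $i\in I_{B}\subset I_{D}$, hence $D_{i}>0$ and the corresponding term is at most $\delta_{i}/D_{i}$, and likewise for $I_{\epsilon}\setminus I_{\gamma}$ via $I_{C}\subset I_{E}$; the residual fraction then satisfies the index hypotheses of Theorem 24, and your extra extraction of $p/q$ in the one configuration Theorem 24 excludes ($p>0$, $\alpha=0$, forcing $A>0$ and hence $q>0$) disposes of it correctly. What your version buys is modularity, since Theorem 24's algebra is reused on the residual rather than redone; what the paper's version buys is a single closed-form constant and no need to re-verify the casework with intersected index sets. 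Both proofs handle the initial segment $n<N$ by the same enlargement of the additive constant.
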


\begin{proof}
First notice that it suffices to show that eventually for $n\geq N$ there exists $M_{3}>0$ and $M_{4}\geq 0$ so that $y_{n}\leq M_{3}x_{n} + M_{4}$. This is since we may take $M_{1}=M_{3}$ and $M_{2}=\max(M_{4}, \max_{n<N}y_{n})$ and so
there exists $M_{1}>0$ and $M_{2}\geq 0$ so that $y_{n}\leq M_{1}x_{n} + M_{2}$ for all $n\in\mathbb{N}$.  In the case where $A>0$ and $p>0$ we have
$$y_n=\frac{p+\sum^{k}_{i=1}\delta_{i}x_{n-i} + \sum^{k}_{i=1}\epsilon_{i}y_{n-i}}{q+\sum^{k}_{j=1}D_{j}x_{n-j} + \sum^{k}_{j=1}E_{j}y_{n-j}} \leq $$
$$\left(\frac{\max(p,\max_{i\in I_{\delta}}(\delta_{i}),\max_{i\in I_{\epsilon}}(\epsilon_{i}))}{\min(q,\min_{j\in I_{D}}(D_{j}),\min_{j\in I_{E}}(E_{j}))} \right)\frac{1+\sum_{i\in I_{\delta}}x_{n-i} +\sum_{i\in I_{\epsilon}}y_{n-i}}{1+\sum_{j\in I_{D}}x_{n-j} + \sum_{j\in I_{E}}y_{n-j}}\leq$$
$$\left(\frac{\max(p,\max_{i\in I_{\delta}}(\delta_{i}),\max_{i\in I_{\epsilon}}(\epsilon_{i}))}{\min(q,\min_{j\in I_{D}}(D_{j}),\min_{j\in I_{E}}(E_{j}))} \right)\frac{1+\sum_{i\in I_{\beta}\cup I_{B}}x_{n-i} +\sum_{i\in I_{\gamma}\cup I_{C}}y_{n-i}}{1+\sum_{j\in I_{B}}x_{n-j} + \sum_{j\in I_{C}}y_{n-j}}\leq$$
$$M_{1} \left(\frac{\alpha+A+\sum^{k}_{i=1}(\beta_{i}+B_{i})x_{n-i}+\sum^{k}_{i=1}(\gamma_{i}+C_{i})y_{n-i}}{A+\sum^{k}_{j=1}B_{j}x_{n-j} + \sum^{k}_{j=1}C_{j}y_{n-j}} \right) = M_1(x_{n}+1),$$
where $$M_1=\frac{(\max(A,\max_{j\in I_{B}}(B_{j}),\max_{j\in I_{C}}(C_{j})))(\max(p,\max_{i\in I_{\delta}}(\delta_{i}),\max_{i\in I_{\epsilon}}(\epsilon_{i})))}{(\min(\alpha +A,\min_{i\in I_{\beta}\cup I_{B}}(\beta_{i}+ B_{i}),\min_{i\in I_{\gamma}\cup I_{C}}(\gamma_{i}+C_{i})))(\min(q,\min_{j\in I_{D}}(D_{j}),\min_{j\in I_{E}}(E_{j})))}.$$ In the case where $A>0$ and $p=0$ we have
$$y_n=\frac{\sum^{k}_{i=1}\delta_{i}x_{n-i} + \sum^{k}_{i=1}\epsilon_{i}y_{n-i}}{q+\sum^{k}_{j=1}D_{j}x_{n-j} + \sum^{k}_{j=1}E_{j}y_{n-j}} \leq $$
$$\left(\frac{\max(\max_{i\in I_{\delta}}(\delta_{i}),\max_{i\in I_{\epsilon}}(\epsilon_{i}))}{\min(q,\min_{j\in I_{D}}(D_{j}),\min_{j\in I_{E}}(E_{j}))} \right)\frac{\sum_{i\in I_{\delta}}x_{n-i} +\sum_{i\in I_{\epsilon}}y_{n-i}}{1+\sum_{j\in I_{D}}x_{n-j} + \sum_{j\in I_{E}}y_{n-j}}\leq$$
$$\left(\frac{\max(\max_{i\in I_{\delta}}(\delta_{i}),\max_{i\in I_{\epsilon}}(\epsilon_{i}))}{\min(q,\min_{j\in I_{D}}(D_{j}),\min_{j\in I_{E}}(E_{j}))} \right) \left(\frac{1+\sum_{i\in I_{\beta}\cup I_{B}}x_{n-i} +\sum_{i\in I_{\gamma}\cup I_{C}}y_{n-i}}{1+\sum_{j\in I_{B}}x_{n-j} + \sum_{j\in I_{C}}y_{n-j}} \right) \leq M_1(x_{n}+1),$$
where $$M_1=\frac{(\max(A,\max_{j\in I_{B}}(B_{j}),\max_{j\in I_{C}}(C_{j})))(\max(\max_{i\in I_{\delta}}(\delta_{i}),\max_{i\in I_{\epsilon}}(\epsilon_{i})))}{(\min(\alpha +A,\min_{i\in I_{\beta}\cup I_{B}}(\beta_{i}+ B_{i}),\min_{i\in I_{\gamma}\cup I_{C}}(\gamma_{i}+C_{i})))(\min(q,\min_{j\in I_{D}}(D_{j}),\min_{j\in I_{E}}(E_{j})))}.$$ In the case where $A=0$, $q>0$, and $p>0$ we have
$$y_n=\frac{p+\sum^{k}_{i=1}\delta_{i}x_{n-i} + \sum^{k}_{i=1}\epsilon_{i}y_{n-i}}{q+\sum^{k}_{j=1}D_{j}x_{n-j} + \sum^{k}_{j=1}E_{j}y_{n-j}} \leq $$
$$\left(\frac{\max(p,\max_{i\in I_{\delta}}(\delta_{i}),\max_{i\in I_{\epsilon}}(\epsilon_{i}))}{\min(q,\min_{j\in I_{D}}(D_{j}),\min_{j\in I_{E}}(E_{j}))} \right)\frac{1+\sum_{i\in I_{\delta}}x_{n-i} +\sum_{i\in I_{\epsilon}}y_{n-i}}{1+\sum_{j\in I_{D}}x_{n-j} + \sum_{j\in I_{E}}y_{n-j}}\leq$$
$$\left(\frac{\max(p,\max_{i\in I_{\delta}}(\delta_{i}),\max_{i\in I_{\epsilon}}(\epsilon_{i}))}{\min(q,\min_{j\in I_{D}}(D_{j}),\min_{j\in I_{E}}(E_{j}))} \right) \left(\frac{1+\sum_{i\in I_{\beta}\cup I_{B}}x_{n-i} +\sum_{i\in I_{\gamma}\cup I_{C}}y_{n-i}}{\sum_{j\in I_{B}}x_{n-j} + \sum_{j\in I_{C}}y_{n-j}} \right) \leq M_1(x_{n}+1),$$
where $$M_1=\frac{(\max(\max_{j\in I_{B}}(B_{j}),\max_{j\in I_{C}}(C_{j})))(\max(p,\max_{i\in I_{\delta}}(\delta_{i}),\max_{i\in I_{\epsilon}}(\epsilon_{i})))}{(\min(\alpha +A,\min_{i\in I_{\beta}\cup I_{B}}(\beta_{i}+ B_{i}),\min_{i\in I_{\gamma}\cup I_{C}}(\gamma_{i}+C_{i})))(\min(q, \min_{j\in I_{D}}(D_{j}),\min_{j\in I_{E}}(E_{j})))}.$$ In the case where $A=0$, $q>0$, $p=0$, and $\alpha = 0$ we have
$$y_n=\frac{\sum^{k}_{i=1}\delta_{i}x_{n-i} + \sum^{k}_{i=1}\epsilon_{i}y_{n-i}}{q+\sum^{k}_{j=1}D_{j}x_{n-j} + \sum^{k}_{j=1}E_{j}y_{n-j}} \leq $$
$$\left(\frac{\max(\max_{i\in I_{\delta}}(\delta_{i}),\max_{i\in I_{\epsilon}}(\epsilon_{i}))}{\min(q,\min_{j\in I_{D}}(D_{j}),\min_{j\in I_{E}}(E_{j}))} \right)\frac{\sum_{i\in I_{\delta}}x_{n-i} +\sum_{i\in I_{\epsilon}}y_{n-i}}{1+\sum_{j\in I_{D}}x_{n-j} + \sum_{j\in I_{E}}y_{n-j}}\leq$$
$$\left(\frac{\max(\max_{i\in I_{\delta}}(\delta_{i}),\max_{i\in I_{\epsilon}}(\epsilon_{i}))}{\min(q,\min_{j\in I_{D}}(D_{j}),\min_{j\in I_{E}}(E_{j}))} \right) \left(\frac{\sum_{i\in I_{\beta}\cup I_{B}}x_{n-i} +\sum_{i\in I_{\gamma}\cup I_{C}}y_{n-i}}{\sum_{j\in I_{B}}x_{n-j} + \sum_{j\in I_{C}}y_{n-j}} \right) \leq M_1(x_{n}+1),$$
where $$M_1=\frac{(\max(\max_{j\in I_{B}}(B_{j}),\max_{j\in I_{C}}(C_{j})))(\max(\max_{i\in I_{\delta}}(\delta_{i}),\max_{i\in I_{\epsilon}}(\epsilon_{i})))}{(\min(\min_{i\in I_{\beta}\cup I_{B}}(\beta_{i}+ B_{i}),\min_{i\in I_{\gamma}\cup I_{C}}(\gamma_{i}+C_{i})))(\min(q, \min_{j\in I_{D}}(D_{j}),\min_{j\in I_{E}}(E_{j})))}.$$ In the case where $A=0$, $q=0$, and $p>0$ we have
$$y_n=\frac{p+\sum^{k}_{i=1}\delta_{i}x_{n-i} + \sum^{k}_{i=1}\epsilon_{i}y_{n-i}}{\sum^{k}_{j=1}D_{j}x_{n-j} + \sum^{k}_{j=1}E_{j}y_{n-j}} \leq $$
$$\left(\frac{\max(p,\max_{i\in I_{\delta}}(\delta_{i}),\max_{i\in I_{\epsilon}}(\epsilon_{i}))}{\min(\min_{j\in I_{D}}(D_{j}),\min_{j\in I_{E}}(E_{j}))} \right)\frac{1+\sum_{i\in I_{\delta}}x_{n-i} +\sum_{i\in I_{\epsilon}}y_{n-i}}{\sum_{j\in I_{D}}x_{n-j} + \sum_{j\in I_{E}}y_{n-j}}\leq$$
$$\left(\frac{\max(p,\max_{i\in I_{\delta}}(\delta_{i}),\max_{i\in I_{\epsilon}}(\epsilon_{i}))}{\min(\min_{j\in I_{D}}(D_{j}),\min_{j\in I_{E}}(E_{j}))} \right) \left(\frac{1+\sum_{i\in I_{\beta}\cup I_{B}}x_{n-i} +\sum_{i\in I_{\gamma}\cup I_{C}}y_{n-i}}{\sum_{j\in I_{B}}x_{n-j} + \sum_{j\in I_{C}}y_{n-j}} \right)\leq M_1(x_{n}+1),$$
where $$M_1=\frac{(\max(\max_{j\in I_{B}}(B_{j}),\max_{j\in I_{C}}(C_{j})))(\max(p,\max_{i\in I_{\delta}}(\delta_{i}),\max_{i\in I_{\epsilon}}(\epsilon_{i})))}{(\min(\alpha +A,\min_{i\in I_{\beta}\cup I_{B}}(\beta_{i}+ B_{i}),\min_{i\in I_{\gamma}\cup I_{C}}(\gamma_{i}+C_{i})))(\min( \min_{j\in I_{D}}(D_{j}),\min_{j\in I_{E}}(E_{j})))}.$$ In the case where $A=0$, $q=0$, $p=0$, and $\alpha =0 $ we have
$$y_n=\frac{\sum^{k}_{i=1}\delta_{i}x_{n-i} + \sum^{k}_{i=1}\epsilon_{i}y_{n-i}}{\sum^{k}_{j=1}D_{j}x_{n-j} + \sum^{k}_{j=1}E_{j}y_{n-j}} \leq $$
$$\left(\frac{\max(\max_{i\in I_{\delta}}(\delta_{i}),\max_{i\in I_{\epsilon}}(\epsilon_{i}))}{\min(\min_{j\in I_{D}}(D_{j}),\min_{j\in I_{E}}(E_{j}))} \right)\frac{\sum_{i\in I_{\delta}}x_{n-i} +\sum_{i\in I_{\epsilon}}y_{n-i}}{\sum_{j\in I_{D}}x_{n-j} + \sum_{j\in I_{E}}y_{n-j}}\leq$$
$$\left(\frac{\max(\max_{i\in I_{\delta}}(\delta_{i}),\max_{i\in I_{\epsilon}}(\epsilon_{i}))}{\min(\min_{j\in I_{D}}(D_{j}),\min_{j\in I_{E}}(E_{j}))} \right) \left(\frac{\sum_{i\in I_{\beta}\cup I_{B}}x_{n-i} +\sum_{i\in I_{\gamma}\cup I_{C}}y_{n-i}}{\sum_{j\in I_{B}}x_{n-j} + \sum_{j\in I_{C}}y_{n-j}} \right) \leq M_1(x_{n}+1),$$
where $$M_1=\frac{(\max(\max_{j\in I_{B}}(B_{j}),\max_{j\in I_{C}}(C_{j})))(\max(\max_{i\in I_{\delta}}(\delta_{i}),\max_{i\in I_{\epsilon}}(\epsilon_{i})))}{(\min(\min_{i\in I_{\beta}\cup I_{B}}(\beta_{i}+ B_{i}),\min_{i\in I_{\gamma}\cup I_{C}}(\gamma_{i}+C_{i})))(\min( \min_{j\in I_{D}}(D_{j}),\min_{j\in I_{E}}(E_{j})))}.$$ In the case where $A=0$, $q>0$, $p=0$, and $\alpha > 0$ we have
$$y_n=\frac{\sum^{k}_{i=1}\delta_{i}x_{n-i} + \sum^{k}_{i=1}\epsilon_{i}y_{n-i}}{q+\sum^{k}_{j=1}D_{j}x_{n-j} + \sum^{k}_{j=1}E_{j}y_{n-j}} \leq $$
$$\left(\frac{\max(\max_{i\in I_{\delta}}(\delta_{i}),\max_{i\in I_{\epsilon}}(\epsilon_{i}))}{\min(q,\min_{j\in I_{D}}(D_{j}),\min_{j\in I_{E}}(E_{j}))} \right)\frac{\sum_{i\in I_{\delta}}x_{n-i} +\sum_{i\in I_{\epsilon}}y_{n-i}}{1+\sum_{j\in I_{D}}x_{n-j} + \sum_{j\in I_{E}}y_{n-j}}\leq$$
$$\left(\frac{\max(\max_{i\in I_{\delta}}(\delta_{i}),\max_{i\in I_{\epsilon}}(\epsilon_{i}))}{\min(q,\min_{j\in I_{D}}(D_{j}),\min_{j\in I_{E}}(E_{j}))} \right)\left(\frac{1+\sum_{i\in I_{\beta}\cup I_{B}}x_{n-i} +\sum_{i\in I_{\gamma}\cup I_{C}}y_{n-i}}{\sum_{j\in I_{B}}x_{n-j} + \sum_{j\in I_{C}}y_{n-j}} \right) \leq M_1(x_{n}+1),$$

where $$M_1=\frac{(\max(\max_{j\in I_{B}}(B_{j}),\max_{j\in I_{C}}(C_{j})))(\max(\max_{i\in I_{\delta}}(\delta_{i}),\max_{i\in I_{\epsilon}}(\epsilon_{i})))}{(\min(\alpha +A,\min_{i\in I_{\beta}\cup I_{B}}(\beta_{i}+ B_{i}),\min_{i\in I_{\gamma}\cup I_{C}}(\gamma_{i}+C_{i})))(\min(q, \min_{j\in I_{D}}(D_{j}),\min_{j\in I_{E}}(E_{j})))}.$$ In the case where $A=0$, $q=0$, $p=0$, and $\alpha >0 $ we have
$$y_n=\frac{\sum^{k}_{i=1}\delta_{i}x_{n-i} + \sum^{k}_{i=1}\epsilon_{i}y_{n-i}}{\sum^{k}_{j=1}D_{j}x_{n-j} + \sum^{k}_{j=1}E_{j}y_{n-j}} \leq $$
$$\left(\frac{\max(\max_{i\in I_{\delta}}(\delta_{i}),\max_{i\in I_{\epsilon}}(\epsilon_{i}))}{\min(\min_{j\in I_{D}}(D_{j}),\min_{j\in I_{E}}(E_{j}))} \right)\frac{\sum_{i\in I_{\delta}}x_{n-i} +\sum_{i\in I_{\epsilon}}y_{n-i}}{\sum_{j\in I_{D}}x_{n-j} + \sum_{j\in I_{E}}y_{n-j}}\leq$$
$$\left(\frac{\max(\max_{i\in I_{\delta}}(\delta_{i}),\max_{i\in I_{\epsilon}}(\epsilon_{i}))}{\min(\min_{j\in I_{D}}(D_{j}),\min_{j\in I_{E}}(E_{j}))} \right) \left(\frac{1+\sum_{i\in I_{\beta}\cup I_{B}}x_{n-i} +\sum_{i\in I_{\gamma}\cup I_{C}}y_{n-i}}{\sum_{j\in I_{B}}x_{n-j} + \sum_{j\in I_{C}}y_{n-j}} \right)\leq M_1(x_{n}+1),$$
where $$M_1=\frac{(\max(\max_{j\in I_{B}}(B_{j}),\max_{j\in I_{C}}(C_{j})))(\max(\max_{i\in I_{\delta}}(\delta_{i}),\max_{i\in I_{\epsilon}}(\epsilon_{i})))}{(\min(\alpha +A,\min_{i\in I_{\beta}\cup I_{B}}(\beta_{i}+ B_{i}),\min_{i\in I_{\gamma}\cup I_{C}}(\gamma_{i}+C_{i})))(\min( \min_{j\in I_{D}}(D_{j}),\min_{j\in I_{E}}(E_{j})))}.$$

\end{proof}

\begin{thm}
Suppose that we have a $k^{th}$ order system of two rational
difference equations
$$x_n=\frac{\alpha+\sum^{k}_{i=1}\beta_{i}x_{n-i} + \sum^{k}_{i=1}\gamma_{i}y_{n-i}}{A+\sum^{k}_{j=1}B_{j}x_{n-j} + \sum^{k}_{j=1}C_{j}y_{n-j}},\quad n\in\mathbb{N},$$
$$y_n=\frac{p+\sum^{k}_{i=1}\delta_{i}x_{n-i} + \sum^{k}_{i=1}\epsilon_{i}y_{n-i}}{q+\sum^{k}_{j=1}D_{j}x_{n-j} + \sum^{k}_{j=1}E_{j}y_{n-j}},\quad n\in\mathbb{N},$$
with non-negative parameters and non-negative initial conditions.
Further suppose that $ I_{\delta}\subset I_{\beta}\cup I_{B}$, $I_{\beta}\subset I_{\delta}\cup I_{D}$, $I_{B} = I_{D}$, $ I_{\epsilon}\subset I_{\gamma}\cup I_{C}$, $I_{\gamma}\subset I_{\epsilon}\cup I_{E}$ $I_{C} = I_{E}$. Also assume that $A>0$ if and only if $q>0$, and whenever $p > 0$, then $\alpha >0$ or $A>0$, also whenever $\alpha > 0$, then $p>0$ or $q>0$. Then
there exists $M_{1},M_{3}>0$ and $M_{4}\geq M_{2}\geq 0$ so that $x_{n}\leq M_{1}y_{n}+M_{2}\leq M_{3}x_{n}+M_{4}$ for all $n\in\mathbb{N}$.
\end{thm}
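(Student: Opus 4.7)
The plan is to apply Theorem 26 twice, following the same template used in the proof of Theorem 25. First I would verify that the hypotheses of Theorem 27 directly imply those of Theorem 26: the index-set inclusions $I_\delta\subset I_\beta\cup I_B$ and $I_\epsilon\subset I_\gamma\cup I_C$ are explicit; the inclusions $I_B\subset I_D$ and $I_C\subset I_E$ are immediate from the equalities $I_B=I_D$ and $I_C=I_E$; the implication $A>0\Rightarrow q>0$ is one direction of the assumed iff; and the condition on $p>0$ is stated verbatim. This application of Theorem 26 yields constants $N_1>0$ and $N_2\geq 0$ with $y_n\leq N_1x_n+N_2$ for all $n\in\mathbb{N}$.

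Next I would apply Theorem 26 to the system obtained by the change of variables that simultaneously swaps $x_n\leftrightarrow y_n$, $\alpha\leftrightarrow p$, $A\leftrightarrow q$, $\beta_i\leftrightarrow\epsilon_i$, $\gamma_i\leftrightarrow\delta_i$, $B_j\leftrightarrow E_j$, $C_j\leftrightarrow D_j$, exactly as in the proof of Theorem 25. Under this relabeling, the hypotheses of Theorem 26 translate to $I_\gamma\subset I_\epsilon\cup I_E$, $I_E\subset I_C$, $I_\beta\subset I_\delta\cup I_D$, $I_D\subset I_B$, together with $q>0\Rightarrow A>0$ and $\alpha>0\Rightarrow p>0\text{ or }q>0$. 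Each is supplied by the hypotheses of Theorem 27 (using $I_B=I_D$, $I_C=I_E$, and the other direction of the iff between $A>0$ and $q>0$). This produces constants $M_1>0$ and $M_2\geq 0$ with $x_n\leq M_1 y_n+M_2$ for all $n\in\mathbb{N}$, which is already the left half of the desired chain.

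Finally I would combine the two bounds. Substituting the first inequality into $M_1y_n+M_2$ gives
$$M_1y_n+M_2\leq M_1(N_1x_n+N_2)+M_2=M_1N_1\,x_n+(M_1N_2+M_2),$$
so setting $M_3=M_1N_1>0$ and $M_4=M_1N_2+M_2$ yields $M_4\geq M_2$ (because $M_1N_2\geq 0$) and the full chain $x_n\leq M_1y_n+M_2\leq M_3x_n+M_4$ for all $n\in\mathbb{N}$.

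The only real obstacle is the careful bookkeeping needed to check that the swap genuinely carries the hypotheses of Theorem 26 to statements already assumed in Theorem 27; once that is verified the rest is just algebraic combination. Since Theorem 25 uses the identical device to lift a one-sided comparison into a two-sided one, nothing conceptually new is required.
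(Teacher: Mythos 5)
Your proposal is correct and follows essentially the same route as the paper: the paper's proof of this theorem likewise applies Theorem 26 once directly and once after the $x\leftrightarrow y$ relabeling, then sets $M_{3}=M_{1}M_{5}$ and $M_{4}=M_{1}M_{6}+M_{2}$ (your $N_{1},N_{2}$ playing the role of its $M_{5},M_{6}$). Your verification of the translated hypotheses and the final algebraic combination match the paper's argument exactly.
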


\begin{proof}
 First notice that Theorem 26 applies to this system. This gives us $M_{5}>0$ and $M_{6}\geq 0$ so that $y_{n}\leq M_{5}x_{n} + M_{6}$ for all $n\in\mathbb{N}$. Moreover after a very simple change of variables Theorem 26 applies again. The change of variables we refer to here comes from renaming $x_{n}$ as $y_{n}$, $\beta_{i}$ as $\epsilon_{i}$, $B_{i}$ as $E_{i}$, $\gamma_{i}$ as $\delta_{i}$, $C_{i}$ as $D_{i}$, $\alpha$ as $p$, $A$ as $q$, and vice versa.
This gives us $M_{1}>0$ and $M_{2}\geq 0$ so that $x_{n}\leq M_{1}y_{n} + M_{2}$ for all $n\in\mathbb{N}$. Choose $M_{3}=M_{1}M_{5}$ and $M_{4}=M_{1}M_{6}+M_{2}$ and we get that there exists $M_{1},M_{3}>0$ and $M_{4}\geq M_{2}\geq 0$ so that $x_{n}\leq M_{1}y_{n}+M_{2}\leq M_{3}x_{n}+M_{4}$ for all $n\in\mathbb{N}$.
\end{proof}

\section{Some Examples}

Here we present some examples which demonstrate how the results presented earlier in this article are applied to particular special cases.


\begin{expl}
Consider the following system of two rational difference equations
$$x_{n}=\frac{\alpha + \beta_{1}x_{n-1}}{A + C_{2}y_{n-2}}, \quad n\in \mathbb{N}, $$
$$y_{n}=\frac{p + \delta_{1}x_{n-1}}{q + E_{2}y_{n-2}}, \quad n\in \mathbb{N} .$$
We assume positive parameters and non-negative initial conditions. This implies that the solutions $x_{n}$ and $y_{n}$ are bounded above by a positive constant.
\end{expl}
\begin{proof}
We apply Theorem 1. First notice that, by theorem 25, that there exists constants $M_{1},M_{2} > 0$ so that $M_{1}y_{n}\leq x_{n} \leq M_{2}y_{n}$ for all $n\in\mathbb{N}$. This is since
$$\begin{array}{c}
\{1\}=I_{\beta} = I_{\delta}\\
\emptyset=I_{B} = I_{D}\\
\emptyset=I_{\gamma} = I_{\epsilon}\\
\{2\}=I_{C} = I_{E}\\
\alpha>0\quad and \quad p>0\\
A>0\quad and \quad q>0.\\
\end{array}$$
For the final condition, let $\eta = 2$ so that for any sequence $\{c_{m}\} ^{\infty}_{m=1} $ with $c_m\in I_{\beta}\cup I_{\gamma}=\{1\}$ for $m=1,2, \dots $ we choose  $
N_{1}=1
$,$
N_{2}=2 \leq \eta
$ $ $ so that $ \sum^{N_{2}}_{m=N_{1}} 1 \in I_{B}\cup I_{C} = \{2\}$.
\end{proof}


\begin{expl}
Consider the following system of two rational difference equations
$$x_{n}=\frac{\beta_{2}x_{n-2} + \gamma_{1}y_{n-1} + \gamma_{2}y_{n-2}}{B_{2}x_{n-2} + C_{1}y_{n-1}}, \quad n\in \mathbb{N}, $$
$$y_{n}=\frac{p + \delta_{2}x_{n-2} + \epsilon_{1}y_{n-1} + \epsilon_{2}y_{n-2}}{q + D_{2}x_{n-2} + E_{1}y_{n-1}}, \quad n\in \mathbb{N} .$$
We assume positive parameters and positive initial conditions. This implies that the solutions $x_{n}$ and $y_{n}$ are bounded above by a positive constant.
\end{expl}
\begin{proof}
We apply Theorem 3 case $(iii)$.
We will now prove that there exists constants $M_{1},M_{2} > 0$ so that $M_{1}y_{n}\leq x_{n} \leq M_{2}y_{n}$ for all $n\in\mathbb{N}$. To show this, we first we show that there exists $L$ so that $y_{n} \geq L$ for all $n\in\mathbb{N}$. We can choose $L=\frac{\min(p,\delta_{2},\epsilon_{1})}{\max(q,D_{2},E_{1})}$, since
$$y_{n}=\frac{p + \delta_{2}x_{n-2} + \epsilon_{1}y_{n-1} + \epsilon_{2}y_{n-2}}{q + D_{2}x_{n-2} + E_{1}y_{n-1}}\geq \frac{\min(p,\delta_{2},\epsilon_{1})}{\max(q,D_{2},E_{1})}.$$
We may choose $M_{2}=\max\left(\left(\frac{\max(1,\beta_{2},\gamma_{1},\gamma_{2})}{\min(\frac{C_{1}L}{2},B_{2},\frac{C_{1}}{2})}\right)\left(\frac{\max(q,D_{2},E_{1})}{\min(p,\delta_{2},\epsilon_{1},\epsilon_{2})}\right),\frac{x_{1}}{y_{1}}\right)$ since
$$x_{n}=\frac{\beta_{2}x_{n-2}+\gamma_{1}y_{n-1}+\gamma_{2}y_{n-2}}{B_{2}x_{n-2}+C_{1}y_{n-1}}\leq
\frac{1+\beta_{2}x_{n-2}+\gamma_{1}y_{n-1}+\gamma_{2}y_{n-2}}{\frac{C_{1}L}{2}+B_{2}x_{n-2}+\frac{C_{1}}{2}y_{n-1}}$$
$$\leq\left(\frac{\max(1,\beta_{2},\gamma_{1},\gamma_{2})}{\min(\frac{C_{1}L}{2},B_{2},\frac{C_{1}}{2})}\right)\left(\frac{1+x_{n-2}+y_{n-1}+y_{n-2}}{1+x_{n-2}+y_{n-1}}\right)$$
$$\leq\left(\frac{\max(1,\beta_{2},\gamma_{1},\gamma_{2})}{\min(\frac{C_{1}L}{2},B_{2},\frac{C_{1}}{2})}\right)\left(\frac{\max(q,D_{2},E_{1})}{\min(p,\delta_{2},\epsilon_{1},\epsilon_{2})}\right)y_{n}.$$
We may choose $M_{1}=\min\left(\left(\frac{\min(\frac{\gamma_{1}L}{2},\beta_{2},\frac{\gamma_{1}}{2},\gamma_{2})}{\max(1,B_{2},C_{1})}\right)\left(\frac{\min(q,D_{2},E_{1})}{\max(p,\delta_{2},\epsilon_{1},\epsilon_{2})}\right),\frac{x_{1}}{y_{1}}\right)$
since
$$x_{n}=\frac{\beta_{2}x_{n-2}+\gamma_{1}y_{n-1}+\gamma_{2}y_{n-2}}{B_{2}x_{n-2}+C_{1}y_{n-1}}\geq
\frac{\frac{\gamma_{1}L}{2}+\beta_{2}x_{n-2}+\frac{\gamma_{1}}{2}y_{n-1}+\gamma_{2}y_{n-2}}{1+B_{2}x_{n-2}+C_{1}y_{n-1}}$$
$$\geq\left(\frac{\min(\frac{\gamma_{1}L}{2},\beta_{2},\frac{\gamma_{1}}{2},\gamma_{2})}{\max(1,B_{2},C_{1})}\right)\left(\frac{1+x_{n-2}+y_{n-1}+y_{n-2}}{1+x_{n-2}+y_{n-1}}\right)$$
$$\geq\left(\frac{\min(\frac{\gamma_{1}L}{2},\beta_{2},\frac{\gamma_{1}}{2},\gamma_{2})}{\max(1,B_{2},C_{1})}\right)\left(\frac{\min(q,D_{2},E_{1})}{\max(p,\delta_{2},\epsilon_{1},\epsilon_{2})}\right)y_{n}.$$
The conditions specific to case $(iii)$ are satisfied since $p,q>0$, $\{1,2\}=I_{D}\cup I_{E}\subset I_{\delta}\cup I_{\epsilon}=\{1,2\}$, and $I_{C}\neq\emptyset$.
For the final condition, let $\eta=1$ so that for any sequence $\{c_{m}\} ^{\infty}_{m=1} $ with $c_m\in I_{\beta}\cup I_{\gamma}=\{1,2\}$ for $m=1,2, \dots $ we choose
$N_{1}=1$, $N_{2}=1 \leq \eta$, since $c_{1}\in \{1,2\}$, so that
$\sum^{N_{2}}_{m=N_{1}} c_{m}=c_{1} \in I_{B}\cup I_{C} = \{1,2\}$.
\end{proof}


\begin{expl}
Consider the following system of two rational difference equations
$$x_{n}=\frac{\alpha + \beta_{1}x_{n-1} + \gamma_{1}y_{n-1}}{B_{2}x_{n-2} + C_{1}y_{n-1}}, \quad n\in \mathbb{N}, $$
$$y_{n}=\frac{p + \delta_{2}x_{n-2} + \epsilon_{1}y_{n-1}}{q}, \quad n\in \mathbb{N} .$$
We assume positive parameters and non-negative initial conditions. This implies that the solution $x_{n}$ is bounded above by a positive constant.
\end{expl}
\begin{proof}
We apply by Theorem 10 case $(iii)$. The conditions  specific for case $(iii)$ are satisfied since
$$\begin{array}{c}
\emptyset = I_{D}\subset I_{\delta}\\
\emptyset = I_{E}\subset I_{\epsilon}\\
\{1\} = I_{\gamma}=I_{C}\\
A=0\quad with \quad p,q>0.\\
\end{array}$$
For the final condition, let $\eta=2$ so that for any sequence $\{c_{m}\} ^{\infty}_{m=1} $ with $c_m\in I_{\beta}=\{1\}$, for $m=1,2, \dots $, we choose  $
N_{1}=1
$,$
N_{2}=2 \leq \eta
$ $ $, such that $ \sum^{N_{2}}_{m=N_{1}} 1 \in I_{B} = \{2\}$.
\end{proof}


\begin{expl}
Consider the following system of two rational difference equations
$$x_{n}=\frac{\beta_{1}x_{n-1} + \beta_{2}x_{n-2} + \gamma_{2}y_{n-2}}{B_{2}x_{n-2} + C_{1}y_{n-1}}, \quad n\in \mathbb{N}, $$
$$y_{n}=\frac{p + \delta_{1}x_{n-1} + \delta_{2}x_{n-2} +\epsilon_{2}y_{n-2}}{q + D_{2}x_{n-2} + E_{1}y_{n-1}}, \quad n\in \mathbb{N} .$$
We assume positive parameters and positive initial conditions. This implies that the solutions $x_{n}$ and $y_{n}$ are bounded above by a positive constant.
\end{expl}
\begin{proof}
We apply Theorem 6.
We will now prove that there exists constants $M_{1},M_{2} > 0$ so that $M_{1}y_{n}\leq x_{n} \leq M_{2}y_{n}$ for all $n\in\mathbb{N}$. To show this, we first show that there exists $L$ so that $y_{n} \geq L$ or all $n\in\mathbb{N}$.  We now show that we may choose
$L=\min\left(\frac{\min(p,\delta_{1},\delta_{2})}{\max(q+E_{1}\left(\frac{p}{q}\right),D_{2},bE_{1})}\right), y_{1})$. To show this, we first deduce the following inequality
$$y_{n}=\frac{p + \delta_{1}x_{n-1} + \delta_{2}x_{n-2} + \epsilon_{2}y_{n-2}}{q + D_{2}x_{n-2} + E_{1}y_{n-1}}\leq\frac{p}{q} + \frac{\delta_{1}x_{n-1} + \delta_{2}x_{n-2} + \epsilon_{2}y_{n-2}}{D_{2}x_{n-2} + E_{1}y_{n-1}}$$ $$\leq
\frac{p}{q} + \left(\frac{\max(\delta_{1},\delta_{2},\epsilon_{2})}{\min(D_{2},E_{1})}\right)\left(\frac{x_{n-1} + x_{n-2} + y_{n-2}}{x_{n-2} + y_{n-1}}\right)$$ $$\leq\frac{p}{q} + \left(\frac{\max(\delta_{1},\delta_{2},\epsilon_{2})}{\min(D_{2},E_{1})}\right)\left(\frac{\max(B_{2},C_{1})}{\min(\beta_{1},\beta_{2},\gamma_{2})}\right)x_{n}.$$
Let $b=\left(\frac{\max(\delta_{1},\delta_{2},\epsilon_{2})}{\min(D_{2},E_{1})}\right)\left(\frac{\max(B_{2},C_{1})}{\min(\beta_{1},\beta_{2},\gamma_{2})}\right)$.
Using the above, we get
$$y_{n}=\frac{p + \delta_{1}x_{n-1} + \delta_{2}x_{n-2} +\epsilon_{2}y_{n-2}}{q + D_{2}x_{n-2} + E_{1}y_{n-1}}\geq \frac{p + \delta_{1}x_{n-1} + \delta_{2}x_{n-2}}{q + D_{2}x_{n-2} + E_{1}\left(\frac{p}{q} + bx_{n-1}\right)}$$ $$\geq \frac{\min(p,\delta_{1},\delta_{2})}{\max(q+E_{1}\left(\frac{p}{q}\right),D_{2},bE_{1})}.$$
We now show that we may choose\newline
$M_{2}=\max\left(\frac{x_{1}}{y_{1}},\frac{x_{2}}{y_{2}},\frac{x_{3}}{y_{3}},\left(\frac{\max(1,\beta_{1},\beta_{2},\gamma_{2})}{\min(\frac{C_{1}L}{2},B_{2},\frac{C_{1}}{2})}\right)\left(\frac{\max(q,D_{2},E_{1})}{\min(p,\delta_{1},\delta_{2},\epsilon_{2})}\right)\right)$
since
$$x_{n}=\frac{\beta_{1}x_{n-1} + \beta_{2}x_{n-2} + \gamma_{2}y_{n-2}}{B_{2}x_{n-2} + C_{1}y_{n-1}}\leq
\frac{1+\beta_{1}x_{n-1} + \beta_{2}x_{n-2} + \gamma_{2}y_{n-2}}{\frac{C_{1}L}{2}+B_{2}x_{n-2}+\frac{C_{1}}{2}y_{n-1}}$$
$$\leq\left(\frac{\max(1,\beta_{1},\beta_{2},\gamma_{2})}{\min(\frac{C_{1}L}{2},B_{2},\frac{C_{1}}{2})}\right)\left(\frac{1+x_{n-1}+x_{n-2}+y_{n-2}}{1+x_{n-2}+y_{n-1}}\right)\leq$$
$$\left(\frac{\max(1,\beta_{1},\beta_{2},\gamma_{2})}{\min(\frac{C_{1}L}{2},B_{2},\frac{C_{1}}{2})}\right)\left(\frac{\max(q,D_{2},E_{1})}{\min(p,\delta_{1},\delta_{2},\epsilon_{2})}\right)y_{n}.$$
We may also choose $M_{1}=\min\left(\frac{x_{1}}{y_{1}},\frac{x_{2}}{y_{2}},\frac{x_{3}}{y_{3}},\left(\frac{\min(\frac{\gamma_{2}L}{2},\beta_{1},\beta_{2},\frac{\gamma_{2}}{2})}{\max(1,B_{2},C_{1})}\right)\left(\frac{\min(q,D_{2},E_{1})}{\max(p,\delta_{1},\delta_{2},\epsilon_{2})}\right)\right)$
since
$$x_{n}=\frac{\beta_{1}x_{n-1} + \beta_{2}x_{n-2} + \gamma_{2}y_{n-2}}{B_{2}x_{n-2} + C_{1}y_{n-1}}\geq
\frac{\frac{\gamma_{2}L}{2}+\beta_{1}x_{n-1} + \beta_{2}x_{n-2} + \frac{\gamma_{2}}{2}y_{n-2}}{1+B_{2}x_{n-2}+C_{1}y_{n-1}}$$
$$\geq\left(\frac{\min(\frac{\gamma_{2}L}{2},\beta_{1},\beta_{2},\frac{\gamma_{2}}{2})}{\max(1,B_{2},C_{1})}\right)\left(\frac{1+x_{n-1}+x_{n-2}+y_{n-2}}{1+x_{n-2}+y_{n-1}}\right)$$
$$\geq\left(\frac{\min(\frac{\gamma_{2}L}{2},\beta_{1},\beta_{2},\frac{\gamma_{2}}{2})}{\max(1,B_{2},C_{1})}\right)\left(\frac{\min(q,D_{2},E_{1})}{\max(p,\delta_{1},\delta_{2},\epsilon_{2})}\right)y_{n}.$$
For the final condition needed to be satisfied, it is observed that $\{1,2\}=I_{\beta}\cup I_{\gamma}\subset I_{B}\cup I_{C}=\{1,2\}$.

\end{proof}

\begin{expl}
Consider the following system of two rational difference equations
$$x_{n}=\frac{\gamma_{1}y_{n-1} + \gamma_{2}y_{n-2}}{A + B_{2}x_{n-2} + C_{2}y_{n-2}}, \quad n\in \mathbb{N}, $$
$$y_{n}=\frac{p + \delta_{1}x_{n-1} + \epsilon_{1}y_{n-1} + \epsilon_{2}y_{n-2}}{q + D_{1}x_{n-1} + D_{2}x_{n-2} + E_{2}y_{n-2}}, \quad n\in \mathbb{N} .$$
We assume positive parameters and non-negative initial conditions. This implies that the solutions $x_{n}$ and $y_{n}$ are bounded above by a positive constant.
\end{expl}
\begin{proof}
We apply Theorem 16. We first notice that there exists $M_{1}>0$ and $M_{2}\geq 0$ so that $y_{n}\leq M_{1}x_{n} + M_{2}$ for all $n\in\mathbb{N}$. This is since
$$y_{n}=\frac{p + \delta_{1}x_{n-1} + \epsilon_{1}y_{n-1} + \epsilon_{2}y_{n-2}}{q + D_{1}x_{n-1} + D_{2}x_{n-2} + E_{2}y_{n-2}}\leq\left(\frac{\max(p,\delta_{1},\epsilon_{1},\epsilon_{2})}{\min(q,D_{1},D_{2},E_{2})}\right)\left(\frac{1 + x_{n-1} + y_{n-1} + y_{n-2}}{1 + x_{n-1} + x_{n-2} + y_{n-2}}\right)$$
$$\leq\left(\frac{\max(p,\delta_{1},\epsilon_{1},\epsilon_{2})}{\min(q,D_{1},D_{2},E_{2})}\right)\left(\frac{1 + x_{n-1}}{1 + x_{n-1} + x_{n-2} + y_{n-2}}\right) $$ $$+\left(\frac{\max(p,\delta_{1},\epsilon_{1},\epsilon_{2})}{\min(q,D_{1},D_{2},E_{2})}\right)\left(\frac{y_{n-1} + y_{n-2}}{1 + x_{n-1} + x_{n-2} + y_{n-2}}\right)$$
$$\leq \left(\frac{\max(p,\delta_{1},\epsilon_{1},\epsilon_{2})}{\min(q,D_{1},D_{2},E_{2})}\right) +\left(\frac{\max(p,\delta_{1},\epsilon_{1},\epsilon_{2})}{\min(q,D_{1},D_{2},E_{2})}\right)\left(\frac{y_{n-1} + y_{n-2}}{1 + x_{n-2} + y_{n-2}}\right)$$
$$\leq M_{1}x_{n}+M_{2}$$
where $M_{1}=\left(\frac{\max(p,\delta_{1},\epsilon_{1},\epsilon_{2})}{\min(q,D_{1},D_{2},E_{2})}\right)\left(\frac{\max(A,B_{2},C_{2})}{\min(\gamma_{1},\gamma_{2})}\right)$ and $M_{2}=\left(\frac{\max(p,\delta_{1},\epsilon_{1},\epsilon_{2})}{\min(q,D_{1},D_{2},E_{2})}\right)$.
For the final condition, let $\eta = 2$ so that for any sequence $\{c_{m}\} ^{\infty}_{m=1} $ with $c_m\in I_{\beta}\cup (I_{\gamma}\backslash I_{C}) = \{1\}$ for $m=1,2, \dots $ we choose  $
N_{1}=1
$,$
N_{2}=2 \leq \eta
$ $ $, such that $ \sum^{N_{2}}_{m=N_{1}} 1 \in I_{B} = \{2\}$.
\end{proof}


\begin{expl}
Consider the following first order system of two rational difference equations
$$x_{n}=\frac{\alpha + \gamma_{1}y_{n-1}}{A + B_{1}x_{n-1}},  \quad n\in \mathbb{N},$$
$$y_{n}=\frac{\delta_{1}x_{n-1} + \epsilon_{1}y_{n-1}}{q + D_{1}x_{n-1}}, \quad n\in \mathbb{N} .$$
We assume positive parameters and non-negative initial conditions. This implies that the solutions $x_{n}$ and $y_{n}$ are bounded above by a positive constant.
\end{expl}

\begin{proof}
We apply Theorem 20. First notice that, by Theorem 27, there exists $M_{1},M_{3}>0$ and $M_{4}\geq M_{2}\geq 0$ so that $x_{n}\leq M_{1}y_{n}+M_{2}\leq M_{3}x_{n}+M_{4}$ for all $n\in\mathbb{N}$. This is since
$$\begin{array}{c}
\{1\}=I_{\delta}\subset I_{\beta}\cup I_{B}=\{1\}\\
\emptyset=I_{\beta}\subset I_{\delta}\cup I_{D}\\
\{1\}=I_{B} = I_{D}\\
\{1\}=I_{\epsilon}\subset I_{\gamma}\cup I_{C}=\{1\}\\
\{1\}=I_{\gamma}\subset I_{\epsilon}\cup I_{E}=\{1\}\\
\emptyset=I_{C} = I_{E}\\
\alpha>0\quad and \quad A>0\quad with \quad q>0.\\
\end{array}$$
For the final condition, let $\eta=1$ so that for any sequence $\{c_{m}\} ^{\infty}_{m=1} $ with $c_m\in I_{\beta}\cup I_{\gamma}=\{1\}$ for $m=1,2, \dots $ we choose
$N_{1}=1$, $N_{2}=1 \leq \eta$ so that
$\sum^{N_{2}}_{m=N_{1}} 1 \in I_{B}\cup I_{C} = \{1\}.$
\end{proof}


\begin{expl}
Consider the following first order system of two rational difference equations
$$x_{n}=\frac{\beta_{1}x_{n-1}+ \gamma_{1}y_{n-1}}{A + B_{1}x_{n-1}},  \quad n\in \mathbb{N},$$
$$y_{n}=\frac{p+ \delta_{1}x_{n-1} + \epsilon_{1}y_{n-1}}{q+ D_{1}x_{n-1}}, \quad n\in \mathbb{N} .$$
We assume positive parameters and non-negative initial conditions. This implies that the solutions $x_{n}$ and $y_{n}$ are bounded above by a positive constant.
\end{expl}

\begin{proof}
We apply Theorem 21. First notice that, by Theorem 27, there exists $M_{1},M_{3}>0$ and $M_{4}\geq M_{2}\geq 0$ so that $x_{n}\leq M_{1}y_{n}+M_{2}\leq M_{3}x_{n}+M_{4}$ for all $n\in\mathbb{N}$. This is since
$$\begin{array}{c}
\{1\}=I_{\delta}\subset I_{\beta}\cup I_{B}=\{1\}\\
\{1\}=I_{\beta}\subset I_{\delta}\cup I_{D}=\{1\}\\
\{1\}=I_{B} = I_{D}\\
\{1\}=I_{\epsilon}\subset I_{\gamma}\cup I_{C}=\{1\}\\
\{1\}=I_{\gamma}\subset I_{\epsilon}\cup I_{E}=\{1\}\\
\emptyset=I_{C} = I_{E}\\
p>0\quad and \quad q>0\quad with \quad A>0.\\
\end{array}$$
For the final condition, let $\eta=1$ so that for any sequence $\{c_{m}\} ^{\infty}_{m=1} $ with $c_m\in I_{\delta}\cup I_{\epsilon}=\{1\}$ for $m=1,2, \dots $ we choose
$N_{1}=1$, $N_{2}=1 \leq \eta$ so that
$\sum^{N_{2}}_{m=N_{1}} 1 \in I_{D}\cup I_{E} = \{1\}.$
\end{proof}


\begin{expl}
Consider the following first order system of two rational difference equations
$$x_{n}=\frac{\alpha + \beta_{1}x_{n-1}}{B_{1}x_{n-1} + C_{1}y_{n-1}},  \quad n\in \mathbb{N},$$
$$y_{n}=\frac{p + \epsilon_{1}y_{n-1}}{D_{1}x_{n-1} + E_{1}y_{n-1}}, \quad n\in \mathbb{N} .$$
We assume positive parameters and non-negative initial conditions. This implies that the solutions $x_{n}$ and $y_{n}$ are bounded above by a positive constant.
\end{expl}

\begin{proof}
We apply Theorem 22 case $(i)$. First notice that, by Theorem 27, there exists $M_{1},M_{3}>0$ and $M_{4}\geq M_{2}\geq 0$ so that $x_{n}\leq M_{1}y_{n}+M_{2}\leq M_{3}x_{n}+M_{4}$ for all $n\in\mathbb{N}$. This is since
$$\begin{array}{c}
\emptyset=I_{\delta}\subset I_{\beta}\cup I_{B}\\
\{1\}=I_{\beta}\subset I_{\delta}\cup I_{D}=\{1\}\\
\{1\}=I_{B} = I_{D}\\
\{1\}=I_{\epsilon}\subset I_{\gamma}\cup I_{C}=\{1\}\\
\emptyset=I_{\gamma}\subset I_{\epsilon}\cup I_{E}\\
\{1\}=I_{C} = I_{E}\\
\alpha>0\quad and \quad p>0.\\
\end{array}$$
Thus $x_{n}\leq M_{1}y_{n}+M_{2} +1\leq M_{3}x_{n}+M_{4} + 2$ for all $n\in\mathbb{N}$.

Case $(i)$ is satisfied, since $\{1\}=I_{B}\cup I_{C}\subset I_{\beta}\cup I_{\gamma}=\{1\}$, $\alpha>0$, and $I_{B}\neq\emptyset$.
For the final condition, let $\eta=1$ so that for any sequence $\{c_{m}\} ^{\infty}_{m=1} $ with $c_m\in I_{\beta}\cup I_{\gamma}=\{1\}$ for $m=1,2, \dots $ we choose
$N_{1}=1$, $N_{2}=1 \leq \eta$ so that
$\sum^{N_{2}}_{m=N_{1}} 1 \in I_{B}\cup I_{C} = \{1\}.$
\end{proof}

\begin{expl}
Consider the following system of two rational difference equations
$$x_{n}=\frac{\alpha + \beta_{2}x_{n-2} + \gamma_{1}y_{n-1} + \gamma_{2}y_{n-2}}{A + B_{2}x_{n-2} + C_{1}y_{n-1} + C_{2}y_{n-2}}, \quad n\in \mathbb{N}, $$
$$y_{n}=\frac{p+\delta_{2}x_{n-2} + \epsilon_{1}y_{n-1} + \epsilon_{2}y_{n-2}}{D_{2}x_{n-2} + E_{2}y_{n-2}}, \quad n\in \mathbb{N} .$$
We assume positive parameters and non-negative initial conditions. This implies that the solutions $x_{n}$ and $y_{n}$ are bounded above by a positive constant.
\end{expl}
\begin{proof}
We apply Theorem 14 case $(ii)$. To apply Theorem 14 we must use a change of variables. The change of variables we refer to here comes from renaming $x_{n}$ as $y_{n}$, $\beta_{i}$ as $\epsilon_{i}$, $B_{i}$ as $E_{i}$, $\gamma_{i}$ as $\delta_{i}$, $C_{i}$ as $D_{i}$, $\alpha$ as $p$, $A$ as $q$, and vice versa.\newline
Although we use this change of variables, we keep notation consistent with our notation before the change of variables for the remainder of this example. We do this to avoid confusion.
First notice that, by applying Theorem 24 after the change of variables, we see that there exists $M_{1}>0$ so that $x_{n}\leq M_{1}y_{n}$ for all $n\in\mathbb{N}$. This is since
$$\begin{array}{c}
\{1,2\}=I_{\gamma}=I_{\epsilon}\\
\{2\}=I_{E}\subset I_{C}=\{1,2\}\\
\{2\}=I_{\beta}=I_{\delta}\\
\{2\}=I_{D}=I_{B}=\{2\}\\
\alpha>0\quad and \quad p>0.\\
\end{array}$$
The conditions for case $(ii)$ are satisfied since
$$\begin{array}{c}
\alpha >0\\
\{1,2\}=I_{C}=I_{\gamma}\\
\{2\}=I_{B}=I_{\beta}\\
I_{D}\neq\emptyset.\\
\end{array}$$
For the final condition let $\eta_{1}=1$ so that for any sequence $\{c_{m}\} ^{\infty}_{m=1} $ with $c_{m}\in I_{\beta}=\{2\}$ for $m=1,2, \dots $
we choose
$N_{1}=1,N_{2}=1 \leq\eta_{1}$, since $c_{1}=2$,
so that $ \sum^{N_{2}}_{m=N_{1}} 2 \in I_{C}\cup I_{B} = \{1,2\}$.
Now, we let $\eta_{2}=2$ so that for any sequence $\{d_{m}\} ^{\infty}_{m=1} $ with $d_{m}\in I_{\epsilon}=\{1,2\}$, for $m=1,2, \dots $,
we choose
$N_{3}=1,N_{4}=1 \leq\eta_{2}$, if $d_{1}=2$,
so that $ \sum^{N_{4}}_{m=N_{3}} d_{m} = 2\in I_{E} = \{2\}$,
we choose
$N_{3}=1 \leq \eta_{2}$ and $N_{4}=2 \leq \eta_{2}$, if $d_{1}=1$ and $d_{2}=1$,
so that $ \sum^{N_{4}}_{m=N_{3}} d_{m}=2\in I_{E} = \{2\}$
and
we choose
$N_{3}=2,N_{4}=2 \leq \eta_{2}=2$, if $d_{2}=2$ with $d_{1}=1$,
so that $ \sum^{N_{4}}_{m=N_{3}} d_{m} \in I_{E} = \{2\}$.
\end{proof}


\begin{expl}
Consider the following system of two rational difference equations
$$x_{n}=\frac{\beta_{1}x_{n-1} + \beta_{2}x_{n-2}}{B_{2}x_{n-2} + C_{1}y_{n-1}}, n=0,1,2,\dots, $$
$$y_{n}=\frac{p + \delta_{1}x_{n-1} + \delta_{2}x_{n-2}}{q + D_{2}x_{n-2} + E_{1}y_{n-1}}, n=0,1,2,\dots .$$
We assume positive parameters and positive initial conditions. This implies that the solutions $x_{n}$ and $y_{n}$ are bounded above by a positive constant.
\end{expl}
\begin{proof}
We apply Theorem 22 case $(ii)$.
We will now prove that there exists $M_{1},M_{3}>0$ and $M_{4}> M_{2}> 0$ so that $x_{n}\leq M_{1}y_{n}+M_{2}\leq M_{3}x_{n}+M_{4}$ for all $n\in\mathbb{N}$. We now show that we may choose
$M_{3}=\max\left(M_{1}\left(\frac{\max(\delta_{1},\delta_{2})}{\min(D_{2},E_{1})}\right)\left(\frac{\max(B_{2},C_{1})}{\min(\beta_{1},\beta_{2})}\right),\frac{M_{1}y_{1}}{x_{1}}\right)$ and $M_{4}=M_{1}(\frac{p}{q}+1)+M_{2}$. This is since
$$y_{n}=\frac{p + \delta_{1}x_{n-1} + \delta_{2}x_{n-2}}{q + D_{2}x_{n-2} + E_{1}y_{n-1}}\leq\frac{p}{q} + \frac{\delta_{1}x_{n-1} + \delta_{2}x_{n-2}}{D_{2}x_{n-2} + E_{1}y_{n-1}}\leq$$
$$\frac{p}{q} + \left(\frac{\max(\delta_{1},\delta_{2})}{\min(D_{2},E_{1})}\right)\left(\frac{x_{n-1} + x_{n-2}}{x_{n-2} + y_{n-1}}\right)\leq\frac{p}{q} + \left(\frac{\max(\delta_{1},\delta_{2})}{\min(D_{2},E_{1})}\right)\left(\frac{\max(B_{2},C_{1})}{\min(\beta_{1},\beta_{2})}\right)x_{n}.$$
Let $b=\left(\frac{\max(\delta_{1},\delta_{2})}{\min(D_{2},E_{1})}\right)\left(\frac{\max(B_{2},C_{1})}{\min(\beta_{1},\beta_{2})}\right)$.
To deduce $M_{1}$ and $M_{2}$ we show that there exists $L$ so that $y_{n} \geq L$ for all $n\in\mathbb{N}$. So,
$$y_{n}=\frac{p + \delta_{1}x_{n-1} + \delta_{2}x_{n-2}}{q + D_{2}x_{n-2} + E_{1}y_{n-1}}\geq \frac{p + \delta_{1}x_{n-1} + \delta_{2}x_{n-2}}{q + D_{2}x_{n-2} + E_{1}\left(\frac{p}{q} + bx_{n-1}\right)}\geq \frac{\min(p,\delta_{1},\delta_{2})}{\max(q+E_{1}\frac{p}{q},D_{2},bE_{1})}.$$
Hence, $L=\min\left(\frac{\min(p,\delta_{1},\delta_{2})}{\max(q+E_{1}\frac{p}{q},D_{2},bE_{1})},y_{1}\right)$.
Now we show that we may choose \newline $M_{1}=\max\left(\left(\frac{\max(1,\beta_{1},\beta_{2})}{\min(\frac{C_{1}L}{2},B_{2},\frac{C_{1}}{2})}\right)\left(\frac{\max(q,D_{2},E_{1})}{\min(p,\delta_{1},\delta_{2})}\right),\frac{x_{n}}{y_{n}}\right)$ and $M_{2}=1$. So,
$$x_{n}=\frac{\beta_{1}x_{n-1} + \beta_{2}x_{n-2}}{B_{2}x_{n-2} + C_{1}y_{n-1}}\leq
\frac{1+\beta_{1}x_{n-1} + \beta_{2}x_{n-2}}{\frac{C_{1}L}{2}+B_{2}x_{n-2}+\frac{C_{1}}{2}y_{n-1}}$$
$$\leq\left(\frac{\max(1,\beta_{1},\beta_{2})}{\min(\frac{C_{1}L}{2},B_{2},\frac{C_{1}}{2})}\right)\left(\frac{1+x_{n-1}+x_{n-2}}{1+x_{n-2}+y_{n-1}}\right)\leq$$
$$\left(\frac{\max(1,\beta_{1},\beta_{2})}{\min(\frac{C_{1}L}{2},B_{2},\frac{C_{1}}{2})}\right)\left(\frac{\max(q,D_{2},E_{1})}{\min(p,\delta_{1},\delta_{2})}\right)y_{n}+1.$$
The conditions of case $(ii)$ are satisfied since $\{1,2\}=I_{D}\cup I_{E} = I_{\delta}\cup I_{\epsilon}=\{1,2\}$, $p>0$ and $I_{C}\neq\emptyset$.
For the final condition, let $\eta=1$ so that for any sequence $\{c_{m}\} ^{\infty}_{m=1} $ with $c_m\in I_{\beta}\cup I_{\gamma}=\{1,2\}$ for $m=1,2, \dots $ we choose
$N_{1}=1$, $N_{2}=1 \leq \eta$ so that
$\sum^{N_{2}}_{m=N_{1}} c_{m}=c_{1} \in I_{B}\cup I_{C} = \{1,2\}$, since $c_{1}\in \{1,2\}$.

\end{proof}

\section{Conclusion}
We have presented numerous techniques which apply the method of iteration to systems of rational difference equations. These techniques provide a starting point for the immense task of understanding the boundedness character of systems of rational difference equations of order greater than one.
There are three directions for further work which we feel have promise. One important goal is to provide some type of comprehensive criterion which, when satisfied, guarantees the success of the boundedness by iteration technique.
Theorem $6$ in \cite{c1} provided this type of criterion for rational difference equations of order greater than one. We feel that a similar approach is required for systems of rational difference equations of order greater than one. Another important direction for further work is to apply similar techniques to those presented above for systems of three or more rational difference equations. Analogues to the ideas in \cite{knopfhuang} for systems of rational difference equations would be another direction of interest. See \cite{syspln}, \cite{ladassystems1}, and \cite{lugopal} for further work on systems of rational difference equations.

\vfill
\pagebreak
\par\vspace{0.5 cm}

\end{document}